\DeclareMathOperator{\Ad}{Ad}
\DeclareMathOperator{\ad}{ad}
\newtheorem{theorem}{Theorem}
\newtheorem{lemma}{Lemma}
\newtheorem{remark}{Remark}
\newtheorem{proposition}{Proposition}
\newtheorem{definition}{Definition}
\newtheorem{corollary}{Corollary}
\begin{document}

\title[ Riemannian $M$-spaces with homogeneous geodesics ]
{ Riemannian $M$-spaces with homogeneous geodesics }
\author{Andreas Arvanitoyeorgos, Yu Wang$^*$ and Guosong Zhao}

\address{University of Patras, Department of Mathematics, GR-26500 Patras, Greece}
\email{arvanito@math.upatras.gr}
\address{Sichuan university of Science and Engineering, Zigong, 643000, China}

\email{wangyu\_813@163.com \\
$^*$ Corresponding author.}

\address{Sichuan university, Chengdu, 610064, China}
\email{gszhao@scu.edu.cn}

\begin{abstract}
We investigate homogeneous geodesics in a class of homogeneous spaces called $M$-spaces, which are defined as follows.  Let $G/K$ be a generalized flag manifold with
$K=C(S)=S\times K_1$,
where $S$ is a torus in a compact simple Lie group $G$ and $K_1$ is the semisimple part of $K$.
Then the {\it associated $M$-space} is the homogeneous space $G/K_1$.  These spaces were introduced and studied by H.C. Wang in 1954.
We prove that for various classes of $M$-spaces the only g.o. metric is the standard metric.
For other classes of $M$-spaces we give either necessary, or necessary and sufficient conditions, so that a $G$-invariant metric on $G/K_1$ is a g.o. metric.
The analysis is based on properties of the isotropy representation
$\mathfrak{m}=\mathfrak{m}_1\oplus \cdots\oplus \mathfrak{m}_s$  of the flag manifold $G/K$ (as $\Ad(K)$-modules) and corresponding decomposition $\mathfrak{n}=\mathfrak{s}\oplus\mathfrak{m}_1\oplus \cdots\oplus \mathfrak{m}_s$ of the tangent space of the $M$-space $G/K_1$ (as $\Ad(K_1)$-modules).

 \medskip

\noindent
{\it 2010 Mathematical Subject Classification.} Primary 53C25. Secondary 53C30.

\medskip

\noindent
{\it Key words.} Generalized flag manifold; isotropy representation; $M$-space; $\mathfrak{t}$-roots;  homogeneous geodesic; geodesic vector; g.o. space.
 \end{abstract}
\maketitle

\section*{Introduction}
 Let $(M, g)$  be a homogeneous Riemannian manifold, i.e. a connected Riemannian manifold on which the largest connected group $G$ of isometries acts transitively. Then $M$ can be expressed as a homogeneous space $(G/K, g)$, where $K$ is the isotropy group at a fixed pointed $o$ of $M$, and $g$ is a $G$-invariant metric.
 A geodesic $\gamma(t)$ through the origin $o$ of $M=G/K$ is called {\it homogeneous} if it is an orbit of a one-parameter subgroup of $G$, that is
\begin{equation}\label{1}
 \gamma(t)=\mathrm{exp}(tX)(o), \quad t \in \mathbb{R},
 \end{equation}
where $X$ is a non zero vector of $\mathfrak{g}$.

A homogeneous Riemannian manifold $M=G/K$ is called a {\it g.o. space}, if all geodesics are homogeneous with respect to the largest connected group of isometries $I_o(M)$.  A $G$-invariant  metric $g$ on $M$ is called {\it $G$-g.o.}  if all geodesics are homogeneous   with respect to the  group $G \subseteq I_o(M)$.  Of course a $G$-g.o. metric is a g.o. metric, but the converse is not true in general. {\it In this paper we only consider $G$-g.o. metrics, which we also call them g.o. metrics}.

Naturally reductive spaces, symmetric spaces and weekly symmetric spaces   are g.o. spaces (\cite{Be-Ko-Va}, \cite{Car}, \cite{Ko-No}, \cite{Zil}).
In \cite{Ko-Pr-Va} O. Kowalski, F. Pr\" ufer and L. Vanhecke gave an explicit classification of all naturally reductive spaces up to dimension five. In \cite{Ko-Va} O. Kowalski and L. Vanhecke gave a classification of all g.o. spaces, which are in no way naturally reductive, up to dimension six.
In \cite{Gor} C. Gordon described g.o. spaces  which are nilmanifolds, and in \cite{Tam} H. Tamaru  classified homogeneous g.o. spaces which are fibered over irreducible symmetric spaces.
In  \cite{Du2} and \cite{Du-Ko1} O. Kowalski and Z. Du\v sek investigated  homogeneous geodesics in   Heisenberg groups  and some $H$-type groups. Examples of g.o. spaces in dimension seven were obtained by Du\v sek, O. Kowalski and S. Nik\v cevi\' c in \cite{Du-Ko-Ni}.
In \cite{Al-Ar} the first  author and D.V. Alekseevsky  classified generalized flag manifolds which are   g.o. spaces. Recently, the first two authors classified generalized  Wallach spaces which are g.o. spaces (\cite{Ar-Wa}).  Also, in \cite{Ch-Ni} Z. Chen and Yu. Nikonorov classified compact simply connected g.o. spaces  with two isotropy summands.
Other interesting results about g.o. spaces  can be found in  \cite{Al-Ni}, \cite{Arv}, \cite{Ca-Ma}, \cite{Du3},  \cite{Du-Ko-Vl},  \cite{Ko-Sz}, \cite{Lat}, \cite{Mar}, \cite{Sze3}, \cite{Sze4} and \cite{Wa-Zh}. Finally, the notion of homogeneous geodesics can be extended to geodesics which are orbits of a product of two exponential factors (cf. \cite{Ar-So1}, \cite{Ar-So2}).

 The general problem of classification of compact homogeneous Riemannian manifolds $(M=G/K, g)$ with homogeneous geodesics remains open.

The object of  the present paper is to study homogeneous geodesics in $M$-spaces.
These spaces were introduced and studied by H.C. Wang in \cite{Wan} and
are defined as follows:  Let $G/K$ be a generalized flag manifold with
$K=C(S)=S\times K_1$,
where $S$ is a torus in a compact simple Lie group $G$ and $K_1$ is the semisimple part of $K$.
Then the {\it corresponding $M$-space} is the homogeneous space $G/K_1$.

 Let $\mathfrak{g}$  and  $\mathfrak{k}$ be the Lie algebras of the Lie groups $G$ and $K$ respectively. Let $\mathfrak{g}=\mathfrak{k}\oplus \mathfrak{m}$ be an $\Ad(K)$-invariant reductive decomposition of the Lie algebra $\mathfrak{g}$, where $\mathfrak{m}\cong T_o(G/K)$.  This is orthogonal with respect to $B=-$Killing from on $\mathfrak{g}$.
Assume that
 \begin{equation}\label{2}
 \mathfrak{m}=\mathfrak{m}_1\oplus \cdots\oplus \mathfrak{m}_s
 \end{equation}
  is a $B$-orthogonal decomposition of $\mathfrak{m}$ into pairwise inequivalent irreducible $\mathrm{ad}(\mathfrak{k})$-modules.

  Let $G/K_1$ be the corresponding $M$-space  and
   $\mathfrak{s}$ and $ \mathfrak{k}_1$ be  the Lie algebras of $S$ and $K_1$ respectively. We denote by $\mathfrak{n}$  the tangent space $T_o(G/K_1)$, where $o=eK_1$, it follows that $\mathfrak{n}=\mathfrak{s}\oplus\mathfrak{m}$.  A $G$-invariant metric $g$  on $G/K_1$ induces a scalar product $\langle \cdot,\cdot \rangle$ on $\mathfrak{n}$ which is $\Ad({K}_1)$-invariant.  Such an  $\Ad({K}_1)$-invariant scalar product $\langle \cdot,\cdot \rangle$ on $\mathfrak{n}$ can be expressed as $\langle x, y \rangle=B(\Lambda x, y)\; (x, y \in \mathfrak{n})$, where $\Lambda$ is an $\Ad({K}_1)$-equivariant positive definite symmetric operator on $\mathfrak{n}$. Conversely, any such operator $\Lambda$ determines an  $\Ad({K}_1)$-invariant scalar product $\langle x, y \rangle=B(\Lambda x, y)$ on $\mathfrak{n}$, which in turn determines   a $G$-invariant Riemannian metric $g$ on $\mathfrak{n}$. We say that  $\Lambda$ is the {\it operator associated} to the metric $g$, or simply the {\it associated operator}.
A  Riemannian metric generated by the inner product $B(\cdot,\cdot)$ is called {\it standard metric}.  An $M$-space $G/K_1$ with the standard metric is g.o.,  since it is naturally reductive.

\smallskip
The main results of the paper are the following:

\begin{theorem} \label{T1} Let  $G/K$ be a generalized flag manifold  with $s\geq 3 $ in the decomposition  (\ref{2}). Let $G/K_1$ be the corresponding $M$-space.
 If  $(G/K_1, g)$  is a  g.o. space,  then
 $$g=\langle \cdot,\cdot\rangle=\Lambda\mid_{\mathfrak{s}}+\lambda B(\cdot,\cdot)\mid_{\mathfrak{m}_1\oplus\mathfrak{m}_2\oplus \cdots \oplus \mathfrak{m}_s}, \ (\lambda >0),
 $$
where  $\Lambda$ is the  operator associated to the metric $g$.
\end{theorem}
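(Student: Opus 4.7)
The plan is to use the Kowalski--Vanhecke geodesic criterion: the metric associated with the operator $\Lambda$ is $G$-g.o.\ iff for every $X\in\mathfrak{n}$ there exists $Z\in\mathfrak{k}_1$ with
\[
[Z+X,\Lambda X]\in\mathfrak{k}_1.
\]
I would substitute test vectors $X$ supported in one or two of the summands of $\mathfrak{n}=\mathfrak{s}\oplus\mathfrak{m}_1\oplus\cdots\oplus\mathfrak{m}_s$ and read off constraints on $\Lambda$ by projecting the resulting bracket componentwise.

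First I would settle the block structure of $\Lambda$. Because $\mathfrak{s}$ is central in $\mathfrak{k}$, $\Ad(K_1)$ acts trivially on $\mathfrak{s}$; on the other hand, the t-root description of the flag manifold shows that each $\mathfrak{m}_i$ carries no $K_1$-fixed vector, since the semisimple factor $K_1$ acts non-trivially on $\mathfrak{m}_{\pm\xi_i}^{\mathbb{C}}$. Schur's lemma applied to the $\Ad(K_1)$-equivariant, $B$-symmetric $\Lambda$ then yields $\Lambda(\mathfrak{s})\subseteq\mathfrak{s}$, no $\mathfrak{s}$--$\mathfrak{m}_i$ cross-terms, and $\Lambda|_{\mathfrak{m}_i}=\lambda_i\,\mathrm{Id}$ on each $\Ad(K_1)$-irreducible $\mathfrak{m}_i$; any off-diagonal block $A_{ij}:\mathfrak{m}_i\to\mathfrak{m}_j$ can survive only when $\mathfrak{m}_i$ and $\mathfrak{m}_j$ are $\Ad(K_1)$-equivalent, and such a block will be killed in Step~2.

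Next I would equate the $\lambda_i$. For $i\neq j$ substitute $X=X_i+X_j$ with $X_i\in\mathfrak{m}_i,\,X_j\in\mathfrak{m}_j$. Using $[\mathfrak{k}_1,\mathfrak{m}_l]\subseteq\mathfrak{m}_l$ together with the t-root fact that $[\mathfrak{m}_i,\mathfrak{m}_j]\subseteq\mathfrak{m}$ has no $\mathfrak{k}$-component when $i\neq j$ (since $\pm\xi_i\pm\xi_j\neq 0$), the $\mathfrak{m}_k$-projection of $[Z+X,\Lambda X]$ for an index $k\notin\{i,j\}$ reduces, modulo the unknown cross-terms, to
\[
(\lambda_j-\lambda_i)\,[X_i,X_j]_{\mathfrak{m}_k}=0.
\]
The hypothesis $s\geq 3$ provides, for every pair $(i,j)$, a chain of t-root sum relations $\xi_k=\xi_{i'}\pm\xi_{j'}$ with nonvanishing bracket component into a third summand $\mathfrak{m}_k$; each link forces $\lambda_{i'}=\lambda_{j'}$, so all scalars collapse to a single value $\lambda>0$. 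With $\lambda_i=\lambda_j$ established, inspection of the $\mathfrak{m}_i$- and $\mathfrak{m}_j$-components of the same criterion, with $X_i,X_j$ varying over $\mathfrak{m}_i,\mathfrak{m}_j$, produces enough linear equations to also eliminate the surviving cross-terms $A_{ij}$.

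The principal obstacle is the combinatorial step above: verifying, for every pair $(i,j)$ and any generalized flag manifold with $s\geq 3$, the existence of such a chain of t-root relations producing a non-zero bracket into a third summand. This is a structural feature of t-root systems of generalized flag manifolds and should be handled by a case analysis along the classification, in the spirit of \cite{Al-Ar}. Combined with Step~1, this yields the stated form $g=\Lambda|_{\mathfrak{s}}+\lambda B(\cdot,\cdot)|_{\mathfrak{m}_1\oplus\cdots\oplus\mathfrak{m}_s}$.
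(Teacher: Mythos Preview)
Your outline captures the irreducible case correctly and is close to the paper's Case~1: the connectedness of the $\mathfrak{t}$-root system for $s\ge 3$ (established in \cite{Al-Ar}, so no case analysis is needed) gives a chain of adjacent $\mathfrak{t}$-roots, and for each adjacent pair the bracket $[\mathfrak{m}_{i_q},\mathfrak{m}_{i_{q+1}}]$ lands entirely outside $\mathfrak{m}_{i_q}\oplus\mathfrak{m}_{i_{q+1}}$, forcing $\lambda_{i_q}=\lambda_{i_{q+1}}$. The paper phrases this via Proposition~\ref{P3} rather than your direct projection, but the substance is the same.

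The genuine gap is that you assume each $\Ad(K)$-irreducible $\mathfrak{m}_i$ remains $\Ad(K_1)$-irreducible, so that Schur gives $\Lambda|_{\mathfrak{m}_i}=\lambda_i\,\mathrm{Id}$. This fails in general: Lemma~\ref{L2} of the paper shows that an $\Ad(K)$-irreducible $\mathfrak{m}_i$ can split as $\mathfrak{n}^i_1\oplus\mathfrak{n}^i_2$ into two \emph{equivalent} $\Ad(K_1)$-irreducibles (and in particular your claim that $K_1$ always acts nontrivially on $\mathfrak{m}_i$ is false when $\dim\mathfrak{m}_i=2$). In that situation $\Lambda|_{\mathfrak{m}_i}$ has the $2\times 2$ block form
\[
\begin{pmatrix}\mu^i_1\,\mathrm{Id}&A^i_{21}\\A^i_{12}&\mu^i_2\,\mathrm{Id}\end{pmatrix},
\]
and one must show both that the off-diagonal intertwiners $A^i_{12}$ vanish and that $\mu^i_1=\mu^i_2$. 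This is the bulk of the paper's proof (Case~2): it requires the structural Lemmas~\ref{L2}, \ref{Le3} and Propositions~\ref{PP3}, \ref{PP4} identifying $\mathfrak{n}^i_1,\mathfrak{n}^i_2$ via a symmetry $\alpha\leftrightarrow\beta$ of $R_i^+$ with $\alpha|_{\mathfrak{a}_1}=-\beta|_{\mathfrak{a}_1}$, followed by explicit Weyl-basis computations with carefully chosen test vectors such as $X_1=A_\alpha+A_{-\beta}$ to force $A^i_{12}=0$. Your Step~2 sketch, which only contemplates cross-terms $A_{ij}$ between \emph{distinct} summands $\mathfrak{m}_i,\mathfrak{m}_j$, does not touch this internal structure, and the argument ``inspection of the $\mathfrak{m}_i$- and $\mathfrak{m}_j$-components \ldots\ produces enough linear equations'' is not a substitute for the root-level analysis the paper carries out.
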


\begin{corollary}\label{CC1}
Let  $G/K$ be a generalized flag manifold  with $s\geq 3 $ in the decomposition  (\ref{2}). Let $G/K_1$ be the corresponding $M$-space. If  $\dim{\mathfrak{s}}=1$ and there exists  some $j\in  \{1,\dots, s\}$  such that $ \mathfrak{m}_{j}$ is reducible as an $\Ad(K_1)$-module, then $(G/K_1, g)$  is a  g.o. space if and only if $g$ is the standard metric.
\end{corollary}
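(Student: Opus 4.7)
The plan is to dispose of one direction quickly and attack the other by combining Theorem~\ref{T1} with a direct geodesic-vector computation. The ``standard $\Rightarrow$ g.o.'' direction is immediate from the remark preceding Theorem~\ref{T1} that the standard metric makes $G/K_1$ naturally reductive. For the converse, I assume $(G/K_1, g)$ is g.o.\ and aim to show that $g$ is a positive multiple of $B$.

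First, since $s \geq 3$, Theorem~\ref{T1} applies and yields $g = \Lambda|_{\mathfrak{s}} + \lambda B|_{\mathfrak{m}}$ with $\lambda > 0$. Because $\dim \mathfrak{s} = 1$, $\Lambda|_{\mathfrak{s}}$ is a single positive scalar $\mu$, so $g = \mu B|_{\mathfrak{s}} + \lambda B|_{\mathfrak{m}}$. The target therefore reduces to proving $\mu = \lambda$.

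Next I would analyse the reducible summand. Decompose $\mathfrak{m}_j = V_1 \oplus \cdots \oplus V_r$ ($r \geq 2$) $B$-orthogonally into $\Ad(K_1)$-irreducibles. Since $[\mathfrak{s}, \mathfrak{k}_1] = 0$, every operator $\ad(X_0)|_{\mathfrak{m}_j}$, $X_0 \in \mathfrak{s}$, is $\Ad(K_1)$-equivariant. A Schur-type argument, combined with the $\Ad(K)$-irreducibility of $\mathfrak{m}_j$ (which forbids any proper $\Ad(K)$-invariant subspace, such as an $\Ad(K_1)$-isotypic component), shows that $\ad(\mathfrak{s})$ cannot preserve the decomposition $V_1 \oplus \cdots \oplus V_r$. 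Hence I can choose $X_0 \in \mathfrak{s}$, an index $i$, and $Y \in V_i$ so that $[X_0, Y]$ has non-zero component in $V_i^\perp \cap \mathfrak{m}_j$.

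With such $X_0$ and $Y$, I apply the geodesic-vector criterion to $Z = X_0 + Y \in \mathfrak{n}$: by hypothesis there exists $a \in \mathfrak{k}_1$ with $\langle [Z + a, W]_{\mathfrak{n}}, Z \rangle = 0$ for every $W \in \mathfrak{n}$. Expanding this using the $B$-orthogonality of $\mathfrak{g} = \mathfrak{k}_1 \oplus \mathfrak{s} \oplus \mathfrak{m}_1 \oplus \cdots \oplus \mathfrak{m}_s$, the ad-invariance of $B$, and the bracket containments $[\mathfrak{s}, \mathfrak{m}_j], [\mathfrak{k}_1, \mathfrak{m}_j] \subset \mathfrak{m}_j$ and $[\mathfrak{k}_1, \mathfrak{s}] = 0$, one checks that the test vectors $W \in \mathfrak{s}$ and $W \in \mathfrak{m}_i$ ($i \neq j$) give trivial identities, while $W \in \mathfrak{m}_j$ reduces, by non-degeneracy of $B$ on $\mathfrak{m}_j$, to the vector equation
$$\lambda [a, Y] \;=\; (\mu - \lambda) [X_0, Y].$$
Since $[a, Y] \in V_i$ (as $V_i$ is $\Ad(K_1)$-invariant) while $[X_0, Y]$ was chosen with non-zero projection onto $V_i^\perp$, projecting both sides onto $V_i^\perp \cap \mathfrak{m}_j$ forces $\mu - \lambda = 0$, as required.

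The hard part will be the structural claim in the third paragraph — producing $(X_0, Y)$ with $[X_0, Y] \notin V_i$. Here one must deal with the Schur division algebra $D = \mathrm{End}_{\Ad(K_1)}(V_i) \in \{\mathbb{R}, \mathbb{C}, \mathbb{H}\}$, using the skew-symmetry of $\ad(X_0)$ with respect to $B$ together with the $\Ad(K)$-irreducibility of $\mathfrak{m}_j$; in particular, the possibility that $\ad(\mathfrak{s})$ acts as a \emph{scalar} on each $V_i$ must be excluded. Once this structural input is available, the bracket expansion in the fourth paragraph is a short algebraic verification.
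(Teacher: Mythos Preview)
Your proof is correct and follows the paper's route (Theorem~\ref{T1}, then the observation that $[\mathfrak{s},V_i]\not\subset V_i$ while $[\mathfrak{k}_1,V_i]\subset V_i$ forces $\mu=\lambda$); the paper simply packages these two ingredients as citations to Lemma~\ref{L2} and Proposition~\ref{P3}. Your final-paragraph worry is unnecessary: your third-paragraph argument already settles the structural claim, since if $\ad(X_0)(V_i)\subset V_i$ for all $i$ then each $V_i$ is invariant under $\mathfrak{s}\oplus\mathfrak{k}_1=\mathfrak{k}$ and hence a proper $\Ad(K)$-submodule of $\mathfrak{m}_j$, a contradiction---the ``$\ad(\mathfrak{s})$ acts as a scalar'' scenario is a special case of this and needs no separate Schur-type analysis.
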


For a generalized flag manifold with two isotropy summands we always assume  that it satisfies $[\mathfrak{m}_1, \mathfrak{m}_1]\subseteq \mathfrak{k}\oplus\mathfrak{m}_2$.  So it does not occur $\dim\mathfrak{m}_1=\dim\mathfrak{m}_2=2,$  since $G$ is simple. If there exists $i\in \{1, 2\}$ such that $\dim\mathfrak{m}_i=2$, we obtain that $i=2$ (see also convention on [Ar-Ch]).

 \begin{theorem} \label{T2}  Let  $G/K$ be a generalized flag manifold  with $s=2$ in the decomposition  (\ref{2}). Let $G/K_1$ be the corresponding $M$-space.

 1) If   both $ \mathfrak{m}_1$ and $ \mathfrak{m}_2$ are irreducible as $\Ad(K_1)$-modules, then $(G/K_1, g)$  is a  g.o. space if and only if for every $V \in \mathfrak{s}$ and for every $ X=X_1+X_2\in \mathfrak{m}$ ($X_i \in \mathfrak{m}_i, i=1, 2$),  there exists $k\in \mathfrak{k}_1$  such that
$$
   [\mu_1 k +(\mu_1-\mu)V+(\mu_1-\mu_2)X_2, X_1]=0\  \ \mbox{and}
$$
$$
   [\mu_2 k +(\mu_2-\mu)V, X_2]=0.
$$
Here $g=\langle \cdot,\cdot\rangle=\mu\mid_{\mathfrak{s}}+\mu_1B(\cdot,\cdot)\mid_{\mathfrak{m}_1}+\mu_2B(\cdot,\cdot )\mid_{\mathfrak{m}_2}$
is any $G$-invariant metric on $G/K_1$.

2) If there exists $j \in \{1, 2\}$ such that $\mathfrak{m}_j$ is reducible as an $\Ad(K_1)$-module and $\mathfrak{m}_i$ ($i\ne j$) is irreducible as  an $\Ad(K_1)$-module,  and   $(G/K_1, g)$  is  a  g.o. space, then

  \begin{equation}\label{eq11}
  g=\langle \cdot,\cdot\rangle=\mu B(\cdot,\cdot)\mid_{\mathfrak{s}+\mathfrak{m}_j}+\mu _i B(\cdot,\cdot)\mid_{\mathfrak{m}_i}.
  \end{equation}

 3) If  both $\mathfrak{m}_1$  and   $\mathfrak{m}_2$ are  reducible as  $\Ad(K_1)$-modules,  then   $(G/K_1, g)$  is  a  g.o. space if and only if $g$ is the standard metric.
\end{theorem}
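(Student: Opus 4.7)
The plan is to apply the geodesic vector criterion (see e.g.\ \cite{Al-Ar}): $(G/K_{1},g)$ is a g.o.\ space if and only if for every $X\in\mathfrak{n}$ there exists $k\in\mathfrak{k}_{1}$ such that $\langle[X+k,Y]_{\mathfrak{n}},X\rangle=0$ for all $Y\in\mathfrak{n}$. Using $\ad$-invariance of $B$ together with the $B$-orthogonality $\mathfrak{k}_{1}\perp\mathfrak{n}$, this is equivalent to
$$[X+k,\Lambda X]_{\mathfrak{n}}=0,$$
where $\Lambda$ is the operator associated to $g$. All three parts of the theorem will be extracted from this single identity by writing $X=V+X_{1}+X_{2}$ with $V\in\mathfrak{s}$, $X_{i}\in\mathfrak{m}_{i}$ and projecting onto the $\Ad(K_{1})$-irreducible summands of $\mathfrak{n}$.

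For part~1, Schur's lemma gives $\Lambda|_{\mathfrak{m}_{i}}=\mu_{i}\cdot\mathrm{id}$. The running assumption $[\mathfrak{m}_{1},\mathfrak{m}_{1}]\subseteq\mathfrak{k}\oplus\mathfrak{m}_{2}$ together with the Jacobi identity and simplicity of $G$ forces $[\mathfrak{m}_{1},\mathfrak{m}_{2}]\subseteq\mathfrak{m}_{1}$ and $[\mathfrak{m}_{2},\mathfrak{m}_{2}]\subseteq\mathfrak{k}$; combined with $[\mathfrak{k},\mathfrak{m}_{i}]\subseteq\mathfrak{m}_{i}$, a direct expansion gives
$$[X+k,\Lambda X]=(\mu_{1}-\mu)[V,X_{1}]+(\mu_{2}-\mu)[V,X_{2}]+(\mu_{2}-\mu_{1})[X_{1},X_{2}]+\mu_{1}[k,X_{1}]+\mu_{2}[k,X_{2}].$$
The $\mathfrak{s}$-projection vanishes identically, while the $\mathfrak{m}_{1}$- and $\mathfrak{m}_{2}$-projections regroup (using $[X_{1},X_{2}]\in\mathfrak{m}_{1}$) into exactly the two bracket identities of the statement.

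For parts~2 and~3, the new ingredient is $\Ad(K_{1})$-reducibility. Write $\mathfrak{m}_{j}=\bigoplus_{\alpha}\mathfrak{m}_{j}^{(\alpha)}$ into $\Ad(K_{1})$-irreducible summands; $\Lambda|_{\mathfrak{m}_{j}}$ is then a scalar $\mu_{j}^{(\alpha)}$ on each $\mathfrak{m}_{j}^{(\alpha)}$, with possible off-diagonal mixing on $\Ad(K_{1})$-equivalent pieces. Because $\mathfrak{m}_{j}$ is still $\Ad(K)$-irreducible, the torus $\mathfrak{s}$ must couple the different summands: for every relevant pair $(\alpha,\beta)$ there exists $V\in\mathfrak{s}$ with $[V,\cdot]\colon\mathfrak{m}_{j}^{(\alpha)}\to\mathfrak{m}_{j}^{(\beta)}$ nonzero. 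Plugging test vectors $X=V+X_{1}+X_{2}$ with $X_{j}\in\mathfrak{m}_{j}^{(\alpha)}$ into $[X+k,\Lambda X]_{\mathfrak{n}}=0$ and projecting onto $\mathfrak{m}_{j}^{(\beta)}$ produces equations of the form $(\mu_{j}^{(\alpha)}-\mu)[V,X_{j}^{(\alpha)}]+(\text{cross terms})=0$; nontriviality of the linking brackets forces $\mu_{j}^{(\alpha)}=\mu$ for every $\alpha$. This is exactly (\ref{eq11}) in part~2, and applying the same argument simultaneously to $j=1$ and $j=2$ collapses $\Lambda$ to a single scalar on $\mathfrak{n}$ in part~3. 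The converse in part~3 is immediate since the standard metric is naturally reductive.

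The main obstacle lies in making the linking argument of the last paragraph rigorous. One must use the $\mathfrak{t}$-root description of the isotropy representation of $G/K$ to exhibit, for every pair of $\Ad(K_{1})$-summands inside an $\Ad(K_{1})$-reducible $\mathfrak{m}_{j}$, an explicit coupling element $V\in\mathfrak{s}$ with nonvanishing $[V,\cdot]$ between them; and, separately, to rule out nontrivial off-diagonal blocks of $\Lambda$ between $\Ad(K_{1})$-equivalent summands by a Schur-type argument on suitably chosen test vectors. Once this linking is secured, the scalar equalities follow by routine projection of $[X+k,\Lambda X]_{\mathfrak{n}}=0$ onto each $\Ad(K_{1})$-irreducible piece of $\mathfrak{n}$.
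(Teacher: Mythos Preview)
Your argument for part~1 is correct and is essentially the paper's proof: both expand $[k+V+X_{1}+X_{2},\Lambda(V+X_{1}+X_{2})]$, use the bracket relations $[\mathfrak{m}_{1},\mathfrak{m}_{2}]\subseteq\mathfrak{m}_{1}$ and $[\mathfrak{m}_{2},\mathfrak{m}_{2}]\subseteq\mathfrak{k}$, and read off the $\mathfrak{m}_{1}$- and $\mathfrak{m}_{2}$-components.

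For parts~2 and~3 your outline is along the right lines, but what you flag as ``the main obstacle'' is in fact the entire content of the proof, and your sketch is not yet accurate about how the constraints arise. Two points. First, the decomposition of a reducible $\mathfrak{m}_{j}$ is not a general $\bigoplus_{\alpha}\mathfrak{m}_{j}^{(\alpha)}$: the paper's Lemma~\ref{L2} shows there are always \emph{exactly two} summands $\mathfrak{n}^{j}_{1},\mathfrak{n}^{j}_{2}$, equivalent and interchanged by $\ad(\mathfrak{s})$, and Lemma~\ref{Le3} pins down explicit root-theoretic generators $A_{\alpha}\pm A_{-\beta}$ for them. Second, the order of deductions matters. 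One cannot simply project $[X+k,\Lambda X]$ and read off $(\mu_{j}^{(\alpha)}-\mu)[V,X_{j}^{(\alpha)}]=0$ modulo ``cross terms'': the off-diagonal block $A_{12}$ of $\Lambda|_{\mathfrak{m}_{j}}$ contributes terms such as $[X_{j},A_{12}X_{j}]$ which land in $\mathfrak{k}$ (in particular in $\mathfrak{s}$) or in the \emph{other} $\mathfrak{m}_{i}$, not in $\mathfrak{m}_{j}$, and these are exactly what force $A_{12}=0$. The paper first kills $A_{12}$ by choosing explicit Weyl-basis test vectors (e.g.\ $X_{1}=A_{\alpha_{1}}+A_{-\beta_{1}}$, $X_{2}=A_{\alpha_{2}}+A_{-\beta_{2}}$ in $\mathfrak{n}^{j}_{1}$) and computing that $[X_{1},A_{12}X_{2}]+[X_{2},A_{12}X_{1}]$ has a nonzero component outside $\mathfrak{m}_{j}$ that nothing else can cancel; only then does it use Proposition~\ref{P3} (with $X\in\mathfrak{n}^{j}_{1}$, $Y\in\mathfrak{n}^{j}_{2}$, then with $V\in\mathfrak{s}$, $X\in\mathfrak{n}^{j}_{1}$) to equalize the diagonal scalars and tie them to $\mu$. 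Your ``Schur-type argument on suitably chosen test vectors'' is therefore not a routine projection but a sequence of explicit structure-constant computations, and without them the proposal for parts~2--3 remains a plan rather than a proof.
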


\begin{corollary}\label{C2}
 Let $G/K$ be a generalized flag manifold with $s=2$ and $\dim\mathfrak{m}_2=2$.
 Then the corresponding $M$-space $(G/K_1, g)$,  where  $g$ is given by (\ref{eq11}),   is a g.o. space for every $\mu  \neq \mu_1$  if and only if for every  $V\in \mathfrak{s}$ and for every $X_1\in \mathfrak{m}_1, X_2\in \mathfrak{m}_2$ there exists $k\in \mathfrak{k}_1$ such that
 \begin{equation}\label{37}
 [k+V+X_2, X_1]=0.
 \end{equation}
 \end{corollary}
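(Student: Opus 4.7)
My plan is to reduce the geodesic-vector criterion, applied to $g$ in (\ref{eq11}), to the single bracket identity (\ref{37}). Writing a general tangent vector as $Y = V + X_1 + X_2 \in \mathfrak{n}$ with $V \in \mathfrak{s}$ and $X_i \in \mathfrak{m}_i$, the associated operator acts by $\Lambda Y = \mu V + \mu_1 X_1 + \mu X_2$. Standard manipulations using $\Ad$-invariance of $B$ together with the $B$-orthogonality of $\mathfrak{k}_1$ and $\mathfrak{n}$ convert the Kowalski--Vanhecke geodesic-vector equation for $k + Y$ (with $k \in \mathfrak{k}_1$) into the form $[\Lambda Y,\, k+Y]_{\mathfrak{n}} = 0$; the g.o.\ property thus amounts to the solvability of this equation in $k$ for every $Y$.

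The decisive observation is that $[\mathfrak{k}_1, \mathfrak{m}_2] = 0$. Indeed, $\dim \mathfrak{m}_2 = 2$ and $K_1$ is compact and semisimple, so the image of $K_1$ in $SO(2)$ is both connected and semisimple, hence trivial; therefore $\Ad(K_1)|_{\mathfrak{m}_2}$ is the identity. Combining this with $[\mathfrak{k}_1,\mathfrak{s}] = 0$ and the standard two-summand bracket inclusions $[\mathfrak{m}_1,\mathfrak{m}_1] \subseteq \mathfrak{k}\oplus\mathfrak{m}_2$, $[\mathfrak{m}_1,\mathfrak{m}_2] \subseteq \mathfrak{m}_1$, $[\mathfrak{m}_2,\mathfrak{m}_2] \subseteq \mathfrak{k}$, I expand $[\Lambda Y, k+Y]$ and project onto $\mathfrak{s}$, $\mathfrak{m}_1$ and $\mathfrak{m}_2$. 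The $\mathfrak{s}$-component vanishes trivially, the $\mathfrak{m}_2$-component reduces to $\mu[X_2,k]$ which is zero by the key observation, and a short bookkeeping identifies the $\mathfrak{m}_1$-component as $-[\mu_1 k + (\mu_1 - \mu)(V + X_2),\, X_1]$.

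The geodesic equation therefore reduces to $[\mu_1 k + (\mu_1 - \mu)(V + X_2),\, X_1] = 0$. Since $\mu_1 - \mu \neq 0$, replacing $k$ by $k' = \mu_1 k/(\mu_1 - \mu) \in \mathfrak{k}_1$ (a bijection of $\mathfrak{k}_1$ to itself) rewrites this condition as $[k' + V + X_2, X_1] = 0$, which is precisely (\ref{37}). As the reduced condition depends neither on $\mu$ nor on $\mu_1$, this yields both implications of the corollary simultaneously. The only step that is not routine bookkeeping is the vanishing $[\mathfrak{k}_1, \mathfrak{m}_2] = 0$; once that is in hand the rest is essentially a specialization of the computation already carried out in the proof of Theorem \ref{T2}(1) with $\mu_2 = \mu$.
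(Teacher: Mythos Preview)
Your proof is correct and follows essentially the same route as the paper: both apply the criterion $[k+Y,\Lambda Y]\in\mathfrak{k}_1$, use $[\mathfrak{k}_1,\mathfrak{m}_2]=0$ (which the paper states from $\dim\mathfrak{m}_2=2$ and you justify via the semisimplicity of $K_1$), and then project the expanded bracket onto $\mathfrak{m}_1$ to obtain $[\mu_1 k+(\mu_1-\mu)(V+X_2),X_1]=0$ before rescaling $k$. The only cosmetic difference is that you track the $\mathfrak{s}$- and $\mathfrak{m}_2$-components explicitly, whereas the paper absorbs these into the observation that the whole expression lies in $\mathfrak{m}_1$.
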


 \begin{theorem} \label{T3}  Let  $G/K$ be a generalized flag manifold  with $s=1$ in the decomposition  (\ref{2}). Let $G/K_1$ be the corresponding $M$-space.

 1) If   $ \mathfrak{m}$  is irreducible as $\Ad(K_1)$-module, then g.o. metrics on $G/K_1$ have been studied by Z.Chen and Yu.Nikonorov\ $(\mathrm{cf.}$ \cite{Ch-Ni}).

2) If   $\mathfrak{m}$  is  reducible as  $\Ad(K_1)$-module,  then   $(G/K_1, g)$  is  a  g.o. space if and only if $g$ is the standard metric.
\end{theorem}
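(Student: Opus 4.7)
\textit{Plan.} Part 1) is attributed to \cite{Ch-Ni}. In Part 2) the ``if'' direction is immediate since the standard metric is naturally reductive, so the content lies in showing that any $G$-g.o.\ metric on $G/K_1$ is a positive scalar multiple of the standard metric. Let $\Lambda$ be the $\Ad(K_1)$-equivariant $B$-symmetric positive operator on $\mathfrak{n}=\mathfrak{s}\oplus\mathfrak{m}$ associated to $g$, and decompose $\mathfrak{m}=\mathfrak{n}_1\oplus\cdots\oplus\mathfrak{n}_p$ into $\Ad(K_1)$-irreducible modules with $p\ge 2$. Since each $\mathfrak{n}_j$ is a nontrivial $K_1$-module, the trivial $\Ad(K_1)$-isotypic component of $\mathfrak{n}$ is exactly $\mathfrak{s}$, and $\Lambda$ preserves both $\mathfrak{s}$ and $\mathfrak{m}$. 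The geodesic lemma will be used in the form: for every $X\in\mathfrak{n}$ there exists $k\in\mathfrak{k}_1$ with $[\Lambda X,X+k]\in\mathfrak{k}_1$.

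\textit{Step 1: $\Lambda|_{\mathfrak{m}}=\mu_0\,\mathrm{Id}_{\mathfrak{m}}$.} Applying the geodesic condition to $X=X_1\in\mathfrak{m}$ and using $[\Lambda X_1,k]\in[\mathfrak{m},\mathfrak{k}_1]\subseteq\mathfrak{m}$, the $\mathfrak{s}$-component of $[\Lambda X_1,X_1+k]$ equals $[\Lambda X_1,X_1]_{\mathfrak{s}}$ and must therefore vanish for every $X_1$. Polarising in $X_1$ and using the $\ad$-invariance of $B$, this rewrites as $[\Lambda|_{\mathfrak{m}},\ad(V)|_{\mathfrak{m}}]=0$ for every $V\in\mathfrak{s}$; combined with the $\Ad(K_1)$-equivariance of $\Lambda$, this makes $\Lambda|_{\mathfrak{m}}$ $\Ad(K)$-equivariant. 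The $\Ad(K)$-irreducibility of $\mathfrak{m}$ and the $B$-symmetry and positivity of $\Lambda$ then force $\Lambda|_{\mathfrak{m}}=\mu_0\,\mathrm{Id}_{\mathfrak{m}}$ for some $\mu_0>0$, since the $B$-symmetric elements of $\mathrm{End}_{\Ad(K)}(\mathfrak{m})$ are real scalars regardless of whether the Schur commutant is $\mathbb{R}$, $\mathbb{C}$ or $\mathbb{H}$.

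\textit{Step 2: $\Lambda|_{\mathfrak{s}}=\mu_0\,\mathrm{Id}_{\mathfrak{s}}$.} Testing now on $X=V+X_1$ with $V\in\mathfrak{s}$ and $X_1\in\mathfrak{m}$ arbitrary, inserting $\Lambda X_1=\mu_0 X_1$, and using $[\mathfrak{s},\mathfrak{s}]=[\mathfrak{s},\mathfrak{k}_1]=0$ reduces the condition to the existence of $k\in\mathfrak{k}_1$ with $[V',X_1]=\mu_0[k,X_1]$, where $V':=\Lambda V-\mu_0 V\in\mathfrak{s}$. Picking $X_1\in\mathfrak{n}_j$ and using $\ad(\mathfrak{k}_1)(X_1)\subseteq\mathfrak{n}_j$ shows that $\ad V'$ preserves each summand $\mathfrak{n}_j$, while the commutativity of $V'$ with $\mathfrak{k}=\mathfrak{s}\oplus\mathfrak{k}_1$ makes $\ad(V')|_{\mathfrak{m}}$ globally $\Ad(K)$-equivariant. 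The $\Ad(K)$-irreducibility of $\mathfrak{m}$ together with its $\Ad(K_1)$-reducibility forces $\mathfrak{m}|_{K_1}$ to be $K_1$-isotypic with $\Ad(S)$ mixing the copies nontrivially (otherwise each $\mathfrak{n}_j$ would be $K$-stable), and a block-diagonal $\Ad(K)$-equivariant operator in this situation must be a scalar multiple of the identity. Combined with the $B$-skew-symmetry of $\ad V'$ and the pointwise condition that $\ad V'(X_1)\in\ad\mathfrak{k}_1(X_1)$ is $B$-orthogonal to $X_1$, I would conclude $\ad V'|_{\mathfrak{m}}=0$; simplicity of $G$ then yields $V'=0$, so $\Lambda|_{\mathfrak{s}}=\mu_0\,\mathrm{Id}_{\mathfrak{s}}$ and $\Lambda=\mu_0\,\mathrm{Id}$, showing $g$ is the standard metric up to scaling. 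The anticipated main obstacle is this last implication: a skew scalar in the Schur algebra vanishes automatically when the commutant is $\mathbb{R}$, but in the $\mathbb{C}$ or $\mathbb{H}$ settings one must invoke the full pointwise g.o.\ constraint to rule out the remaining purely imaginary possibilities.
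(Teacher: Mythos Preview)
Your approach is correct and takes a genuinely different route from the paper's. The paper proves Theorem~\ref{T3}\,(2) by explicit Weyl-basis computations: it writes the operator $\Lambda|_{\mathfrak m}$ in the $2\times 2$ block form~(\ref{equ88}), chooses specific root vectors such as $X_1=A_{\alpha_1}+A_{-\beta_1}$, expands $[k+X,\Lambda X]$ in coordinates using Lemma~\ref{L1}, and derives contradictions component by component (mirroring the proofs of Theorems~\ref{T1} and~\ref{T2}). Your Step~1 is considerably cleaner: extracting $[\Lambda X_1,X_1]_{\mathfrak s}=0$ from the g.o.\ condition and polarising to obtain $[\Lambda|_{\mathfrak m},\ad V|_{\mathfrak m}]=0$ is an elegant way to promote $\Lambda|_{\mathfrak m}$ to an $\Ad(K)$-intertwiner and invoke Schur, bypassing all root combinatorics.

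Your Step~2, however, is overcomplicated, and the ``anticipated obstacle'' is illusory. Since $s=1$ forces $\dim\mathfrak s=1$, write $\mathfrak s=\mathbb R V_0$; by Lemma~\ref{L2} (or Remark~\ref{R1}) the map $\ad V_0$ sends $\mathfrak n_1$ isomorphically onto $\mathfrak n_2$ and vice versa. Your own derivation shows $\ad V'$ preserves each $\mathfrak n_j$, but $V'=cV_0$, so $c\ad V_0(\mathfrak n_1)\subseteq\mathfrak n_1\cap\mathfrak n_2=0$, whence $c=0$. This is exactly how the paper closes the argument (via Proposition~\ref{P3}): $[V,X]\in\mathfrak n_2$ while $[k,X]\in\mathfrak n_1$ for $X\in\mathfrak n_1$. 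No commutant analysis, no $\mathbb C/\mathbb H$ dichotomy, is needed.

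One small gap to note: your claim that the trivial $\Ad(K_1)$-isotypic component of $\mathfrak n$ is exactly $\mathfrak s$ (hence $\Lambda\mathfrak m\subseteq\mathfrak m$) fails when $\dim\mathfrak m=2$, since then $\mathfrak n_1,\mathfrak n_2$ are one-dimensional and $\mathfrak k_1$ acts trivially on all of $\mathfrak n$. The paper's block form~(\ref{equ77}) has the same tacit assumption, though its proof does treat $\dim\mathfrak m=2$ as a separate explicit case. In that degenerate situation $K_1=\{e\}$ and the g.o.\ condition reduces to $[X,\Lambda X]=0$ for all $X\in\mathfrak g$, i.e.\ bi-invariance, which for simple $G$ forces $\Lambda$ to be scalar; you should mention this separately.
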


 The paper is organised as follows:  In Section 1 we  recall certain Lie theoretic properties of a generalized flag manifold $G/K$ and corresponding $M$-space $G/K_1$. Then we investigate   $\Ad(K_1)$-irreducible submodules (cf. Lemma \ref{L2}, \ref{Le3}). In Section 2, 3 we  recall  basic facts  about g.o. spaces. In Sections 4, 5, 6  we  give the proofs of Theorems \ref{T1}, \ref{T2},  \ref{T3} and Corollaries  \ref{CC1}, \ref{C2}.

 \medskip

\noindent
{\bf Acknowledgements.}
The first author was supported by Grant \# E.037 from the research committee of the
University of Patras (Programme K. Karatheodori).

The second author is supported by NSFC 11501390,
and by funding of Sichuan University of Science and Engineering grant
Grant No. 2015RC10.

The third author is supported by NSFC 11571242.

\noindent

\section{Generalized flag manifolds and $M$-spaces}\label{Section1}

Let  $G/K=G/C(S)$  be a  generalized flag manifold, where $G$  is a compact semisimple Lie group and $S$ is a torus in $G$, here $C(S)$ denotes the centralizer of $S$ in G. Let $\mathfrak{g}$ and $\mathfrak{k}$ be the Lie algebras of the Lie groups $G$ and $K$ respectively, and  $\mathfrak{g}^\mathbb{C}$ and $\mathfrak{k}^\mathbb{C}$ be the complexifications of $\mathfrak{g}$ and $\mathfrak{k}$ respectively. Let $\mathfrak{g}=\mathfrak{k}\oplus\mathfrak{m}$ be a reductive decomposition with respect to $B=-$Killing form  on $\mathfrak{g}$ with $[\mathfrak{k}, \mathfrak{m}]\subset \mathfrak{m}$. Let $T$ be a maximal torus of $G$ containing $S$. Then this is a maximal torus in $K$. Let $\mathfrak{a}$ be the Lie algebra of $T$ and $\mathfrak{a}^{\mathbb{C}}$ its complex. Then  $\mathfrak{a}^{\mathbb{C}}$ is a Cartan subalgebra of $\mathfrak{g}^{\mathbb{C}}$. Let $R$ be a root system of $\mathfrak{g}^{\mathbb{C}}$ with respect to $\mathfrak{a}^{\mathbb{C}}$  and  $\Pi=\{\alpha_1, \dots, \alpha_l\}, (l=\dim_{\mathbb{C}} \mathfrak{a}^\mathbb{C})$ be a system of simple roots of $R$, and  $\{\Lambda_1, \dots, \Lambda_l\}$ be the fundamental weights of $\mathfrak{g}^{\mathbb{C}}$ corresponding to $\Pi$, that is $\frac{2B(\Lambda_i, \alpha_j)}{B(\alpha_j, \alpha_j)}=\delta_{ij}, ( 1 \leq i, j\leq l)$. We  can identify $(\mathfrak{a}^\mathbb{C})^\ast$
with $\mathfrak{a}^\mathbb{C}$ as follows: For every
$\alpha\in(\mathfrak{a}^\mathbb{C})^\ast$ it corresponds to  $h_\alpha\in
\mathfrak{a}^\mathbb{C}$ by the equation $B(H, h_\alpha)=\alpha(H)$ for all
$H \in \mathfrak{a}^\mathbb{C}$. Let
\begin{equation}
\mathfrak{g}^{\mathbb{C}}=\mathfrak{a}^{\mathbb{C}}\oplus \sum_{\alpha \in R}{\mathfrak{g}^{\mathbb{C}}_{\alpha}}
\end{equation}
be the root space decomposition,  where
\begin{equation}
{\mathfrak{g}^{\mathbb{C}}_{\alpha}}=\{ X\in \mathfrak{g}^{\mathbb{C}}: [H, X]=\alpha(H)X\; \; \forall H\in \mathfrak{a}^{\mathbb{C}}\}.
\end{equation}

Since ${\mathfrak{k}}^{\mathbb{C}}$ contains $\mathfrak{a}^{\mathbb{C}}$, there is a subset $R_K$ of $R$ such that $\mathfrak{k}^{\mathbb{C}}=\mathfrak{a}^{\mathbb{C}}\oplus \sum_{\alpha \in R_K}{\mathfrak{g}^{\mathbb{C}}_{\alpha}}$. We choose a system of simple roots $\Pi_K$ of $R_K$ and a system of simple roots $\Pi$  of $R$ so that $\Pi_K\subset \Pi$.  We choose an ordering in $R^{+}$.  Then  there is a natural ordering in $R^{+}_K$,  so that $R^{+}_K\subset R^{+}$.     Set $R_M=R\setminus R_K$ (complementary roots). Then $\mathfrak{m}^{\mathbb{C}}=\sum_{\alpha\in R_{M}}\mathfrak{g}^{\mathbb{C}}$.

\begin{definition}\label{D1} An invariant ordering $R_{M}^{+}$  in  $R_{M}$ is a choice of a subset $R_{M}^{+} \subset R_{M}$  such that

\noindent
(i) $R=R_K \sqcup R_{M}^{+}\sqcup R_{M}^{-}$,  where $R_ {M}^-=\{ -\alpha: \alpha\in R_{M}^+ \}$,

\noindent
(ii) If $\alpha, \beta \in  R_{M}^+$  and  $\alpha+\beta\in  R_{M}$,   we have $ \alpha+\beta\in  R_{M}^+$,

\noindent
(iii) If $\alpha \in R_{M}^+, \beta \in R_{K}^+ $ and $\alpha+\beta \in R $,  we have $ \alpha+\beta \in R_{M}^+$.

For any $\alpha, \beta \in R_{M}^+$ we define $\alpha>\beta$ if and only if ${\alpha-\beta} \in R_{M}^+$.
\end{definition}

We choose a Weyl basis $\{E_{\alpha}, H_{\alpha}: \alpha \in R\}$ in $\mathfrak{g}^{\mathbb{C}}$ with $B(E_{\alpha}, E_{-\alpha})=1, [E_{\alpha}, E_{-\alpha}]=H_{\alpha}$ and
\begin{equation}\label{4}
[E_\alpha,E_\beta]= \begin {cases} 0, & if \,\  \alpha + \beta  \not \in R \; and \; \alpha + \beta\neq 0,\\
N_{ \alpha, \beta}E_{ \alpha+\beta}, & if \,\   \alpha+ \beta \in R, \end {cases}
\end{equation}
where the structural constants $N_{\alpha,\beta}\;(\neq 0)$ satisfy $N_{\alpha,\beta}=-N_{{-\alpha},-{\beta}} $ and $ N_{\beta,\alpha}=-N_{{\alpha},{\beta}}$. Then we have that
\begin{equation}
\mathfrak{g}^{\mathbb{C}}=\mathfrak{a}^{\mathbb{C}} \oplus\sum_{\alpha\in R_K}\mathfrak{g}^{\mathbb{C}}_{\alpha}\oplus\sum_{\alpha\in R_M}\mathfrak{g}^{\mathbb{C}}_{\alpha},
\end{equation}
and $\{E_{\alpha}: \alpha \in R_M\}$  is a basis of $\mathfrak{m}^{\mathbb{C}}$. It is well known that
\begin{equation}\label{6}
\mathfrak{g}_{u}=\sum_{\alpha \in R^{+}}{\mathbb{R}\sqrt{-1}}H_{\alpha}\oplus\sum_{\alpha\in R^+}(\mathbb{R}A_\alpha +\mathbb{R}B_\alpha),
\end{equation}
where $A_\alpha=E_\alpha-E_{-\alpha}, B_\alpha=\sqrt{-1}(E_\alpha+E_{-\alpha})\ (\alpha\in R^+)$ is a  compact real form of $\mathfrak{g}^{\mathbb{C}}$.
Hence we can identify $\mathfrak{g}$  with $\mathfrak{g}_{u}$. In fact $\mathfrak{g}=\mathfrak{g}_{u}$ is the fixed point set of the conjugation
 ${X +\sqrt{-1}Y} \mapsto \overline{X +\sqrt{-1}Y}=X -\sqrt{-1} Y $ in  $\mathfrak{g}^\mathbb{C}$  so that $\overline{E_{\alpha}}=-E_{-\alpha}$. Hence $\mathfrak{k}=\sum_{\alpha \in R^{+}}{\mathbb{R}\sqrt{-1}}H_{\alpha} \oplus\sum_{\alpha\in R_{K}^+}(\mathbb{R}A_\alpha+\mathbb{R}B_\alpha).$
We set $R^{+}_M=R^{+}\setminus R^{+}_K$. Then
\begin{equation} \label{7}
\mathfrak{m}=\sum_{\alpha \in R_{M}^+}(\mathbb{R}A_\alpha+\mathbb{R}B_\alpha).
\end{equation}

The next lemma gives us  information about the Lie algebra structure of $\mathfrak{g}$.

\begin{lemma}\label{L1}
The Lie bracket among the elements of $ \{A_\alpha, B_\alpha, \sqrt{-1}H_\beta:\; \alpha \in {R}^+, \, \beta \in \Pi \}$ of $\mathfrak{g}$ are given by
 \begin{eqnarray*}
&&[\sqrt{-1}H_\alpha, A_\beta]=\beta(H_\alpha)B_\beta ,  \quad \quad \quad \quad
 [A_\alpha,A_\beta]=N_{\alpha,\beta}A_{\alpha+\beta}+N_{-\alpha,\beta} A_{\alpha-\beta} \;(\alpha\neq \beta), \\
&& [\sqrt{-1}H_\alpha, B_\beta]=-\beta(H_\alpha)A_\beta, \quad \quad\quad [B_\alpha,B_\beta]=-N_{\alpha,\beta}A_{\alpha+\beta}-N_{\alpha,-\beta}A_{\alpha-\beta}\;(\alpha\neq \beta),\\
&&[A_\alpha, B_\alpha]=2\sqrt{-1}H_\alpha, \quad\quad\quad\quad\quad\quad[A_\alpha,B_\beta]=N_{\alpha,\beta}B_{\alpha+\beta}+N_{\alpha,-\beta}B_{\alpha-\beta}\; (\alpha\neq \beta),
 \end{eqnarray*}
where $N_{\alpha,\beta}$ are the structural constants in (\ref{4}).
\end{lemma}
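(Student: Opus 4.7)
The plan is to prove Lemma~\ref{L1} by direct computation, starting from the defining formulas $A_\alpha = E_\alpha - E_{-\alpha}$ and $B_\alpha = \sqrt{-1}(E_\alpha + E_{-\alpha})$ and using only the three ingredients already present: the Weyl basis relations (\ref{4}), the Cartan identity $[H_\alpha, E_\beta] = \beta(H_\alpha)E_\beta$, and the normalization $[E_\alpha, E_{-\alpha}] = H_\alpha$. Each of the six formulas is then obtained by expanding the bracket into its $E_{\pm\alpha},E_{\pm\beta}$ constituents and regrouping.

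First I would dispatch the two brackets involving $\sqrt{-1}H_\alpha$. For $[\sqrt{-1}H_\alpha,A_\beta]$ one writes $\sqrt{-1}[H_\alpha, E_\beta - E_{-\beta}] = \sqrt{-1}\,\beta(H_\alpha)(E_\beta + E_{-\beta}) = \beta(H_\alpha)B_\beta$; for $[\sqrt{-1}H_\alpha,B_\beta]$ the extra factor of $\sqrt{-1}$ combines with the first to give $-1$, producing $-\beta(H_\alpha)A_\beta$. Next, the diagonal bracket $[A_\alpha,B_\alpha]$ expands into four terms of which only $[E_\alpha,E_{-\alpha}]$ and $-[E_{-\alpha},E_\alpha]$ survive, each contributing $H_\alpha$, so the total is $2\sqrt{-1}H_\alpha$.

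For the three remaining identities (with $\alpha\neq\beta$), I would expand $[A_\alpha,A_\beta]$, $[B_\alpha,B_\beta]$, $[A_\alpha,B_\beta]$ into the four commutators $[E_{\pm\alpha},E_{\pm\beta}]$, apply (\ref{4}) to each, and use the antisymmetries $N_{\alpha,\beta} = -N_{-\alpha,-\beta}$ and $N_{\beta,\alpha} = -N_{\alpha,\beta}$ (from which $N_{-\alpha,\beta} = -N_{\alpha,-\beta}$) to pair the resulting $E_{\alpha+\beta}$ with $E_{-(\alpha+\beta)}$ (and similarly $E_{\pm(\alpha-\beta)}$). These pairs collapse to $A_{\alpha\pm\beta}$ or $B_{\alpha\pm\beta}$ depending on the overall sign inherited from the $\sqrt{-1}$'s. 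When $\alpha+\beta$ (resp.\ $\alpha-\beta$) is not a root, the corresponding terms vanish under the convention $N_{\gamma,\delta}=0$, so the stated formulas hold uniformly.

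The only real obstacle is bookkeeping: keeping the signs introduced by $\sqrt{-1}$ straight while simultaneously using the correct antisymmetry to rewrite $N_{-\alpha,-\beta}$ and $N_{-\alpha,\beta}$ in terms of $N_{\alpha,\beta}$ and $N_{\alpha,-\beta}$. There is no conceptual difficulty; the lemma is a standard verification for a compact real form expressed in a Weyl basis.
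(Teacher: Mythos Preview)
Your proposal is correct and is precisely the standard direct verification one performs here; the paper in fact states Lemma~\ref{L1} without proof, so your expansion into the Weyl basis vectors $E_{\pm\alpha}$ together with the relations (\ref{4}), $[H_\alpha,E_\beta]=\beta(H_\alpha)E_\beta$, $[E_\alpha,E_{-\alpha}]=H_\alpha$, and the sign identities $N_{\alpha,\beta}=-N_{-\alpha,-\beta}$, $N_{-\alpha,\beta}=-N_{\alpha,-\beta}$ is exactly what is intended.
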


An important invariant of a generalized flag manifold $G/K$ is the set $R_\mathfrak{t}$ of  $\mathfrak{t}$-roots.  Their importance arises from the fact that the knowledge of $R_\mathfrak{t}$ gives us crucial information about the decomposition of the isotropy representation of the flag manifold $G/K$.

From now on we fix a system of simple roots $\Pi=\{\alpha_{1},\dots ,\alpha_{r},\phi_{1},\dots ,\phi_{k}\}$ of $R$, so that $\Pi_K=\{\phi_{1},\dots ,\phi_{k}\}$ is a basis of the root system $R_K$ and $\Pi_M=\Pi\setminus\Pi_{K}=\{\alpha_{1}, \dots ,\alpha_{r}\}$ $(r+k=l)$.

We consider the decomposition $R=R_K\cup R_M$ and let
\begin{equation}\label{8}
\mathfrak{t}=\mathfrak{z}(  \mathfrak{k}^\mathbb{C}) \cap\sqrt{ -1}\mathfrak{a}=\{X\in \sqrt{-1}\mathfrak{a} :\phi(X)=0, \ \mathrm{for\;  all} \ \phi \in R_K\},
\end{equation}
where  $\mathfrak{z}(  \mathfrak{k}^\mathbb{C})$ is the center of $\mathfrak{k}^\mathbb{C}$.
Consider the  restriction map $\kappa: (\mathfrak{a}^{\mathbb{C}})^\ast\rightarrow \mathfrak{t}^\ast$ defined by $\kappa(\alpha)=\alpha\mid_\mathfrak{t}$, and set
$R_\mathfrak{t}=\kappa(R)=\kappa(R_M)$. Note that $\kappa(R_K)=0$ and $\kappa(0)=0.$

 The elements of $R_\mathfrak{t}$ are called {\it $\mathfrak{t}$-roots}. For an invariant ordering $R_{M}^+=R^+\setminus R_{K}^+$ in $R_{M}$, we set $R_{\mathfrak{t}}^+=\kappa(R_{M}^+)$ and $R_{\mathfrak{t}}^-=-R_{\mathfrak{t}}^+= \{-\xi: \xi \in R_{\mathfrak{t}}^+\}$. It is obvious that  $R_{\mathfrak{t}}^-=\kappa(R_{M}^-)$, thus the splitting  $R_{\mathfrak{t}}=R_{\mathfrak{t}}^-\cup R_{\mathfrak{t}}^+$ defines an ordering in  $R_{\mathfrak{t}}$. A $\mathfrak{t}$-root $\xi \in  R_{\mathfrak{t}}^+$  (respectively $\xi \in R_{\mathfrak{t}}^-$) will be called {\it positive} (respectively negative).
A $ \mathfrak{t} $-root is called {\it simple} if it is not a sum of two positive \ $ \mathfrak{t} $-roots.
The set $\Pi_\mathfrak{t}$ of all simple $\mathfrak{t}$-roots is called a {\it $\mathfrak{t}$-basis }of $\mathfrak{t}^\ast$, in the sense that any $\mathfrak{t}$-root can be written as a linear combination of its elements with integer coefficients of the same sign.

\begin{definition}[\cite{Al-Ar}] \label{D2}
(1)  Two $\mathfrak{t}$-roots $\xi, \eta \in R_{\mathfrak{t}}$ are called {\it adjacent} if one of the following occurs:

(i)  If $\eta$ is a multiple of $\xi$, then  $\eta\neq \pm 2\xi$  and $\xi\neq \pm2\eta$.

(ii) If $\eta$ is not a multiple of $\xi$, then $\xi+\eta \in R_{\mathfrak{t}}$  or $\xi-\eta\in R_{\mathfrak{t}}$.

(2) Two $\mathfrak{t}$-roots $\xi, \eta \in R_{\mathfrak{t}}$ are called {\it connected} if there is a chain of $\mathfrak{t}$-roots
$$
\xi=\xi_1, \xi_2, \dots, \xi_k=\eta
$$
such that $\xi_i, \xi_{i+1}$ are adjacent $(i=1, \dots,  k-1)$.
\end{definition}

We remark that $\xi$ and $\pm \xi$ are connected, and if $\xi, 2\xi$ are the only positive $\mathfrak{t}$-roots, then these are not connected. We define the relation
\begin{equation}
\xi \sim \eta \Leftrightarrow  \xi ,  \eta \; \mathrm{are\;  connected. }
\end{equation}
One can easily check that this is an equivalence relation. Let $R^{i}$ be the equivalent
classes consisting of mutually connected $\mathfrak{t}$-roots. Then the set $R_{\mathfrak{t}}$ is decomposed
into a disjoint union
\begin{equation}
R_{\mathfrak{t}} = R^{1} \cup \cdots \cup  R^{r}.
\end{equation}

\begin{definition}
The set of $\mathfrak{t}$-roots  $R_{\mathfrak{t}}$ is called connected if $r=1$.
\end{definition}

Also,  if $G$ is simple, then for $s \geq 3$ in the decomposition (\ref{2}), the set of $\mathfrak{t}$-roots is connected (cf. \cite{Al-Ar}).

\begin{proposition}  [\cite{Al-Pe}]  \label{P1}  There is one-to-one correspondence between $\mathfrak{t}$-roots and complex irreducible
$\mathrm{ad}(\mathfrak{k}^\mathbb{C})$-submodules $\mathfrak{m}_{\xi}$ of $\mathfrak{m}^\mathbb{C}$. This correspondence is given by
$$R_{\mathfrak{t}}\ni \xi \leftrightarrow \mathfrak{m}_{\xi} =\sum_{{\alpha \in R_{M}: \kappa(\alpha)=\xi}}{\mathbb{C}E_{\alpha}}.$$
Thus $\mathfrak{m}^\mathbb{C}=\sum_{{\xi \in R_{\mathfrak{t}}}}\mathfrak{m}_{\xi}$. Moreover, these submodules are inequivalent as
$\mathrm{ad}(\mathfrak{k}^\mathbb{C})$-modules.
\end{proposition}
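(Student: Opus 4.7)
\smallskip
\noindent
\textbf{Proof plan for Proposition \ref{P1}.}
The plan is to verify in turn that each $\mathfrak{m}_\xi$ is $\mathrm{ad}(\mathfrak{k}^{\mathbb{C}})$-invariant, that the sum over $\xi\in R_{\mathfrak{t}}$ exhausts $\mathfrak{m}^{\mathbb{C}}$, that each $\mathfrak{m}_\xi$ is irreducible, and finally that distinct $\xi$ yield inequivalent modules.

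First, I would check invariance by decomposing $\mathfrak{k}^{\mathbb{C}}=\mathfrak{a}^{\mathbb{C}}\oplus \sum_{\beta\in R_K}\mathfrak{g}^{\mathbb{C}}_{\beta}$ and examining the action on a generator $E_\alpha$ with $\kappa(\alpha)=\xi$. For $H\in\mathfrak{a}^{\mathbb{C}}$ we have $[H,E_\alpha]=\alpha(H)E_\alpha\in\mathfrak{m}_\xi$; for $E_\beta$ with $\beta\in R_K$, the bracket is either $0$ or a nonzero multiple of $E_{\alpha+\beta}$ (the case $\alpha+\beta=0$ is impossible since $\kappa(\alpha)=\xi\ne 0=\kappa(-\alpha)$ would fail). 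When $\alpha+\beta\in R$, additivity of $\kappa$ together with $\kappa(\beta)=0$ forces $\kappa(\alpha+\beta)=\xi$, so $\alpha+\beta\in R_M$ and $E_{\alpha+\beta}\in\mathfrak{m}_\xi$. Since $\mathfrak{m}^{\mathbb{C}}=\sum_{\alpha\in R_M}\mathbb{C}E_\alpha$ and the $\xi$-fibers of $\kappa\big|_{R_M}$ partition $R_M$, the decomposition $\mathfrak{m}^{\mathbb{C}}=\sum_{\xi\in R_{\mathfrak{t}}}\mathfrak{m}_\xi$ is a direct sum by construction.

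The main obstacle is irreducibility of $\mathfrak{m}_\xi$. My approach is to fix $\xi\in R_{\mathfrak{t}}$, pick two roots $\alpha,\alpha'\in R_M$ with $\kappa(\alpha)=\kappa(\alpha')=\xi$, and produce a chain
$$\alpha=\alpha_0,\ \alpha_1=\alpha_0+\beta_1,\ \ldots,\ \alpha_n=\alpha_{n-1}+\beta_n=\alpha',\qquad \beta_i\in R_K,\ \alpha_i\in R_M,$$
so that iterated bracketing with the $E_{\beta_i}$ sends $E_\alpha$ (up to a nonzero scalar from the structural constants $N_{\alpha_{i-1},\beta_i}$) to $E_{\alpha'}$; this shows any $\mathrm{ad}(\mathfrak{k}^{\mathbb{C}})$-submodule containing $E_\alpha$ contains every $E_{\alpha'}$ in $\mathfrak{m}_\xi$, and then using root-space decomposition under $\mathfrak{a}^{\mathbb{C}}$ one concludes the submodule is all of $\mathfrak{m}_\xi$. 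The existence of such a chain is equivalent to the statement that the Weyl group $W_K$ of $R_K$ acts transitively on $\kappa^{-1}(\xi)\cap R_M$, and for this I would invoke the standard fact from parabolic subalgebra theory that the fibers of $\kappa\big|_{R_M}$ are exactly the $W_K$-orbits on $R_M$, combined with the fact that $W_K$ is generated by reflections in roots of $R_K$ (so a Weyl chain gives the desired additive chain).

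Finally, for inequivalence I would use the central character. The subspace $\mathfrak{t}^{\mathbb{C}}$ (the complexification of $\mathfrak{t}$) lies in the center of $\mathfrak{k}^{\mathbb{C}}$ by definition (\ref{8}), and for any $H\in\mathfrak{t}^{\mathbb{C}}$ and $E_\alpha\in\mathfrak{m}_\xi$ we have $[H,E_\alpha]=\alpha(H)E_\alpha=\xi(H)E_\alpha$. Hence $\mathfrak{t}^{\mathbb{C}}$ acts on the irreducible module $\mathfrak{m}_\xi$ by the scalar character $\xi$, and two such modules $\mathfrak{m}_\xi,\mathfrak{m}_{\xi'}$ with $\xi\ne\xi'$ carry distinct central characters, hence cannot be equivalent as $\mathrm{ad}(\mathfrak{k}^{\mathbb{C}})$-modules. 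This completes the correspondence $\xi\leftrightarrow\mathfrak{m}_\xi$.
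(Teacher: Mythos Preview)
The paper does not prove this proposition at all; it is quoted from \cite{Al-Pe} and stated without proof, so there is no argument in the paper to compare yours against.

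Your proposal is essentially correct. Invariance, the direct-sum decomposition, and the inequivalence via the central character of $\mathfrak{t}^{\mathbb{C}}$ are all fine as written. The only place that is slightly compressed is the passage from ``$W_K$ acts transitively on $\kappa^{-1}(\xi)$'' to ``there is an additive chain $\alpha_0,\alpha_1,\dots,\alpha_n$ with $\alpha_i-\alpha_{i-1}\in R_K$''. A single reflection $s_\beta$ with $\beta\in R_K$ sends $\alpha$ to $\alpha-\langle\alpha,\beta^\vee\rangle\beta$, which need not be a one-step move; what makes the chain exist is that the entire $\beta$-string through $\alpha$ consists of roots, all with $\kappa$-value $\xi$ (hence all in $R_M$), so one can walk from $\alpha$ to $s_\beta(\alpha)$ by adding or subtracting $\beta$ one step at a time, each step giving a nonzero structural constant. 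Concatenating such walks over a word in simple $R_K$-reflections yields the chain you want. With that sentence added, the irreducibility argument is complete: any nonzero $\mathrm{ad}(\mathfrak{k}^{\mathbb{C}})$-submodule is $\mathfrak{a}^{\mathbb{C}}$-stable, hence contains some $E_\alpha$, hence by the chain contains every $E_{\alpha'}$ with $\kappa(\alpha')=\xi$.
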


Since the complex conjugation $ \tau: \mathfrak{g}^\mathbb{C} \rightarrow  \mathfrak{g}^\mathbb{C}$, $X +\sqrt{-1}Y \mapsto  X -\sqrt{-1} Y \; ( X,Y \in \mathfrak{g})$ of $\mathfrak{g}^\mathbb{C}$ with respect to the compact real form $\mathfrak{g}$ interchanges the root spaces, i.e. $\tau(E_{\alpha})=E_{-\alpha}$ and $\tau(E_{-\alpha})=E_{\alpha}$, a decomposition of the real $\Ad(K)$-module $\mathfrak{m}=(\mathfrak{m}^\mathbb{C})^\tau$ into real irreducible $\Ad(K)$-submodule is given by
\begin{equation}\label{9}
\mathfrak{m}=\sum_ {{{ \xi \in R_{\mathfrak{t}}^+}=\kappa(R_{M}^+)}} (\mathfrak{m}_\xi \oplus \mathfrak{m}_{-\xi})^\tau,
\end{equation}
where $V^\tau$ denotes the set of fixed points of the complex conjugation $\tau$ in a vector subspace $V\subset \mathfrak{g}^\mathbb{C}$. If, for simplicity, we set $R_{\mathfrak{t}}^+=\{\xi_1,\dots ,\xi_s \}$, then according to (\ref{9}) each real irreducible $\Ad(K)$-submodule $\mathfrak{m}_{i}=(\mathfrak{m}_{\xi_i}\oplus \mathfrak{m}_{-\xi_i})^\tau \ (1 \leq i \leq s)$  corresponding to the positive $\mathfrak{t}$-roots $\xi_i$, is given by
\begin{equation}\label{10}
\mathfrak{m}_i=\sum_ {{\alpha \in R_{M}^+:\kappa (\alpha)=\xi _i}} (\mathbb{R} A_\alpha +\mathbb{R} B_\alpha ).
\end{equation}

\begin{definition} [\cite{Wan}]
Let $G/K$ be a generalized flag manifold with  $G$ simple and $K=C(S)=S\times K_1$, where $S$ is a torus in $G$ and $K_1$  is the semisimple part of $K$, here $C(S)$ denotes the centralizer of $S$ in G.
The homogeneous space $G/K_1$ is called the corresponding $M$-space.
\end{definition}

The following Lemma is important in our study.

\begin{lemma}\label{L2}
Assume that an  $\Ad(K)$-irreducible submodule $\mathfrak{m}_{i}\ (i\in \{1, \dots,  s \})$ is  $\Ad(K_1)$-reducible. Then we have a decomposition   $\mathfrak{m}_{i}=\mathfrak{n}^i_1\oplus  \mathfrak{n}^i_{2}$, where $\mathfrak{n}^i_{1}$ and   $\mathfrak{n}^i_{2}$  are equivalent irreducible $\Ad(K_1)$-invariant  submodules.
\end{lemma}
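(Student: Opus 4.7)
The plan is to complexify and exploit the decomposition $\mathfrak{m}_i^{\mathbb{C}} = \mathfrak{m}_{\xi_i} \oplus \mathfrak{m}_{-\xi_i}$ together with the conjugation $\tau$. First I would check that each of $\mathfrak{m}_{\pm\xi_i}$ remains irreducible as an $\mathrm{ad}(\mathfrak{k}_1^{\mathbb{C}})$-module. Since $\mathfrak{s}$ acts on $\mathfrak{m}_{\pm\xi_i}$ by the scalar $\pm\xi_i$, any $\mathrm{ad}(\mathfrak{k}_1^{\mathbb{C}})$-invariant subspace is automatically $\mathrm{ad}(\mathfrak{k}^{\mathbb{C}})$-invariant, so Proposition \ref{P1} transfers irreducibility from $\mathfrak{k}^{\mathbb{C}}$ to $\mathfrak{k}_1^{\mathbb{C}}$.

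Next, fix a proper nonzero irreducible $\Ad(K_1)$-submodule $\mathfrak{n}^i_1 \subset \mathfrak{m}_i$ and let $V := \mathfrak{n}^i_1 \otimes_{\mathbb{R}} \mathbb{C}$, a proper nontrivial $\tau$-invariant $\mathrm{ad}(\mathfrak{k}_1^{\mathbb{C}})$-submodule of $\mathfrak{m}_i^{\mathbb{C}}$. If $V \cap \mathfrak{m}_{\xi_i} \ne 0$, irreducibility would give $V \supseteq \mathfrak{m}_{\xi_i}$, and since $\tau$ interchanges $\mathfrak{m}_{\xi_i}$ with $\mathfrak{m}_{-\xi_i}$, $\tau$-invariance of $V$ would force $V = \mathfrak{m}_i^{\mathbb{C}}$, contradicting properness. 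Thus both projections $V \to \mathfrak{m}_{\pm\xi_i}$ are injective and, by Schur, are $\mathrm{ad}(\mathfrak{k}_1^{\mathbb{C}})$-equivariant isomorphisms; so $V$ is the graph of an equivariant isomorphism $\phi : \mathfrak{m}_{\xi_i} \to \mathfrak{m}_{-\xi_i}$. Translating $\tau V = V$ into an identity on $\phi$ yields $(\tau \circ \phi)^2 = \mathrm{id}$ on $\mathfrak{m}_{\xi_i}$, so $J := \tau \circ \phi$ is a $K_1$-equivariant real structure on $\mathfrak{m}_{\xi_i}$.

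To produce the complement I would take $V' := \{(Y, -\phi(Y)) : Y \in \mathfrak{m}_{\xi_i}\}$; the same computation shows $V'$ is $\tau$-invariant, $V \cap V' = 0$ is immediate, and a dimension count gives $\mathfrak{m}_i^{\mathbb{C}} = V \oplus V'$. Setting $\mathfrak{n}^i_2 := (V')^{\tau}$ and taking $\tau$-fixed parts of the direct sum yields $\mathfrak{m}_i = \mathfrak{n}^i_1 \oplus \mathfrak{n}^i_2$. Under the identification $V \cong \mathfrak{m}_{\xi_i}$ via the first projection, $\mathfrak{n}^i_1$ and $\mathfrak{n}^i_2$ correspond to the $+1$- and $-1$-eigenspaces of $J$ on $\mathfrak{m}_{\xi_i}$, and multiplication by $\sqrt{-1}$ is a $K_1$-equivariant $\mathbb{R}$-linear bijection between these eigenspaces; hence $\mathfrak{n}^i_1 \cong \mathfrak{n}^i_2$ as $\Ad(K_1)$-modules. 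The main obstacle I anticipate is the linear-algebraic bookkeeping with $\tau$: turning $\tau$-invariance of a graph, whose source and target summands $\tau$ swaps, into the clean identity $J^2 = \mathrm{id}$, and then verifying that the parallel construction with $-\phi$ really produces a complementary submodule equivalent to $\mathfrak{n}^i_1$.
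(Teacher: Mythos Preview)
Your argument is correct and genuinely different from the paper's. The paper works entirely in the real module: it picks $h=\sqrt{-1}h_{\Lambda_{i_1}}\in\mathfrak{s}$, observes that $\mathrm{ad}(h)$ acts on $\mathfrak{m}_i$ as (a scalar multiple of) a complex structure commuting with $\mathrm{ad}(\mathfrak{k}_1)$ (since $\alpha(h)$ is the same nonzero constant for all $\alpha\in R_i^+$ and $[\mathfrak{s},\mathfrak{k}_1]=0$), sets $\mathfrak{n}'=[\,h,\mathfrak{n}^i_1\,]$, checks that $\mathfrak{n}^i_1+\mathfrak{n}'$ is $\Ad(K)$-invariant and hence equals $\mathfrak{m}_i$, and then gets $l_i=2$ by a dimension count. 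You instead complexify, use the splitting $\mathfrak{m}_i^{\mathbb C}=\mathfrak{m}_{\xi_i}\oplus\mathfrak{m}_{-\xi_i}$ and the conjugation $\tau$, and realise any proper real $K_1$-submodule as the graph of an equivariant isomorphism $\phi$; the complement then comes from $-\phi$.

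Two small remarks. First, your appeal to ``Schur'' for the projections is really just irreducibility of $\mathfrak{m}_{\pm\xi_i}$: you showed $V\cap\mathfrak{m}_{\pm\xi_i}=0$, so each projection $p_\pm$ is injective with nonzero image (else $V$ would sit inside the other summand), hence surjective onto the irreducible target. Second, the two proofs are secretly the same intertwiner: your ``multiplication by $\sqrt{-1}$ on $\mathfrak{m}_{\xi_i}$'', transported back to $\mathfrak{m}_i$ via $p_+$, is up to scalar exactly the paper's $\mathrm{ad}(\sqrt{-1}h_{\Lambda_{i_1}})$. The paper's version has the advantage of making that intertwiner explicit, which it exploits later (Remark~\ref{R1} and throughout the proofs of Theorems~\ref{T1}--\ref{T3}, where the off-diagonal blocks $A^i_{jk}$ are identified with $\mathrm{ad}(\sqrt{-1}h_{\Lambda_l})$); your version is cleaner representation theory and avoids the Weyl-basis bookkeeping.
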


\begin{proof}
Let $R_{\mathfrak{t}}^+=\{\xi_1,\dots ,\xi_s \}$ be a set of positive $\mathfrak{t}$-roots for  the  generalized flag manifold $G/K$.
Set
\begin{equation}\label{w2}
R_{j}^+= \{ \alpha \in R_M^+: \; \alpha\mid_{\mathfrak{t}}=\xi _j \},  \; j=1,\dots, s.
\end{equation}

   Set $\alpha _j|_{\mathfrak{t}}=\overline{\alpha_j}, \, (j=1, \dots, r)$, where $\Pi_M=\Pi\setminus\Pi_K=\{\alpha_1, \dots, \alpha_r\}$. Then $\Pi_{\mathfrak{t}}=\{\overline{\alpha_1}, \dots, \overline{\alpha_r}\}$ is a $\mathfrak{t}$-basis of $\mathfrak{t}^{\ast}$. Therefore,   there exist  $\alpha_{i_1}, \dots, \alpha_{i_p}, i_1, \dots, i_p \in \{1, \dots, r\}$ and positive integers $b_1, \dots, b_p$  such that $\alpha\mid_{\mathfrak{t}}=b_1\overline{\alpha_{i_1}}+\cdots+b_p\overline{\alpha_{i_p}}$  for any $\alpha \in R_{i}^+$.  It follows that $B(\Lambda_{i_j}, \alpha)\neq 0$  for any  $\alpha\in R_{i}^+$ and $j\in \{1, \dots, p\}$. Then we have that
\begin{equation}\label{equ.1}
[\sqrt{-1}h_{{\Lambda_{i_j}}}, A_{\alpha}]=B(\Lambda_{i_j}, \alpha)B_{\alpha},\quad [\sqrt{-1}h_{{\Lambda_{i_j}}}, B_{\alpha}]=-B(\Lambda_{i_j}, \alpha)A_{\alpha}
\end{equation}
for any  $\alpha\in R_{i}^+$.

Assume that an  $\Ad(K)$-irreducible submodule $\mathfrak{m}_{i}\ (i\in \{1, \dots,  s \})$ is  $\Ad(K_1)$-reducible and set $\mathfrak{m}_{i}=\mathfrak{n}^i_1\oplus \cdots \oplus \mathfrak{n}^i_{l_i}$, where $\mathfrak{n}^i_{k}\; (k=1,\dots, l_i)$ are  irreducible $\Ad(K_1)$-invariant  submodules.  By (\ref{equ.1}) it is easy to check that $[\sqrt{-1}h_{\Lambda_{i_1}}, \mathfrak{n}^i_{1}]=[\sqrt{-1}h_{\Lambda_{i_q}}, \mathfrak{n}^i_{1}]$ for any $q\in \{2, \dots, p\}$. Set $\mathfrak{n}^{'}=[\sqrt{-1}h_{\Lambda_{i_1}}, \mathfrak{n}^i_{1}]$. Then by (\ref{equ.1}) it follows that  $\mathfrak{n}^i_{1}=[\sqrt{-1}h_{\Lambda_{i_1}}, [\sqrt{-1}h_{\Lambda_{i_1}}, \mathfrak{n}^i_{1}]]=[\sqrt{-1}h_{\Lambda_{i_1}}, \mathfrak{n}^{'}]=[\sqrt{-1}h_{\Lambda_{i_q}}, \mathfrak{n}^{'}]$ for any $q\in \{2, \dots, p\}$.  Then we have  that $\mathfrak{n}^i_{1}$ and $  \mathfrak{n}^{'}$ are irreducible $\Ad(K_1)$-invariant subspaces which are isomorphic. Since $\Pi_M=\{\alpha_1, \dots, \alpha_r\}$, this implies that  $\mathfrak{s}=\{H\in \mathfrak{a}: B(H, \Pi_K)=0 \}=\mathbb{R}\sqrt{-1}h_{\Lambda_1}+\cdots+\mathbb{R}\sqrt{-1}h_{\Lambda_r}$. Since  $ \mathfrak{k}=\mathfrak{s}\oplus \mathfrak{k}_1$, it follows that $[\mathfrak{s}\oplus \mathfrak{k}_1, \mathfrak{n}^i_{1}]=[\mathfrak{s}, \mathfrak{n}^i_{1}]+[\mathfrak{k}_1, \mathfrak{n}^i_{1}]\subset \mathfrak{n}^{'}+\mathfrak{n}^i_{1}$
and $[\mathfrak{s}\oplus \mathfrak{k}_1, \mathfrak{n}^{'}]=[\mathfrak{s}, \mathfrak{n}^{'}]+[\mathfrak{k}_1, \mathfrak{n}^{'}]\subset \mathfrak{n}^i_{1}+\mathfrak{n}^{'}$,  so $\mathfrak{n}^i_{1}+ \mathfrak{n}^{'}$ is an $\Ad(K)$-invariant subspace of $\mathfrak{m}_i$. Since $\mathfrak{m}_i$ is irreducible as an $\Ad(K)$-module, we obtain that $\mathfrak{m}_i=\mathfrak{n}^i_{1}+ \mathfrak{n}^{'}$. This implies that $\mathfrak{n}^i_2\oplus \cdots \oplus \mathfrak{n}^i_{l_i}\subseteq \mathfrak{n}^{'}$. Since $\mathfrak{n}^i_{1}$ and $  \mathfrak{n}^{'}$  are isomorphic, it follows that $\dim\mathfrak{n}^i_1\geq \dim\mathfrak{n}^i_2$. In the same way we obtain that $\dim\mathfrak{n}^i_2\geq \dim\mathfrak{n}^i_1$.  Hence we have $\mathfrak{n}^{'}=\mathfrak{n}^i_2$, which implies that  $l_i=2$. It is obvious that $\mathfrak{n}^i_{1}$ and   $\mathfrak{n}^i_{2}$  are equivalent by the map $\ad(\sqrt{-1}h_{\Lambda_{i_1}}): \mathfrak{n}^i_{1}\rightarrow \mathfrak{n}^{'}=\mathfrak{n}^i_{2}$.
\end{proof}

\begin{remark}\label{R1}
There are several choices for the equivalent map from   submodule $\mathfrak{n}^i_{1}$ to $\mathfrak{n}^i_{2}$
 in Lemma \ref{L2}, it is $\ad(\sqrt{-1}h_{\Lambda_{i_q}}): \mathfrak{n}^i_{1}\rightarrow \mathfrak{n}^i_{2}$.
  for any $q \in \{1, \dots, p\}$.
    Here it is  $\alpha\mid_{\mathfrak{t}}=a_1\overline{\alpha_{i_1}}+\cdots+a_p\overline{\alpha_{i_p}}$  for any $\alpha \in R_{i}^+$,
     where $i_1, \dots, i_p \in \{1, \dots, r\}$ and
      $\alpha _j|_{\mathfrak{t}}=\overline{\alpha_j}, \; (j=1, \dots, r)$.
\end{remark}

\begin{remark}\label{R2}
 If there exists $j\in \{1, \dots, s\}$ such that $\dim \mathfrak{m}_j=2$, then  $\mathfrak{m}_{j}$ is  $\Ad(K_1)$-reducible, and $\mathfrak{m}_{j}=\mathfrak{n}^j_1\oplus \mathfrak{n}^j_{2}$, where $\mathfrak{n}^j_{1}=\mathbb{R}A_{\alpha}$ and $\mathfrak{n}^j_{2}=\mathbb{R}B_{\alpha},  \; (\alpha \in R_{j}^+)$.
\end{remark}

With regard to the set (\ref{w2}) let $R_i^- = -R_i^+$ and let $R_i = R_i^+\sqcup R_i^-$. Also, we fix a Cartan subalgebra $\mathfrak{a}_1$ of the Lie algebra $\mathfrak{k}_1$.

\begin{definition}
A root $\alpha\in R_i$ is called the lowest (resp. highest) root in $R_i^+$ if $\alpha - \gamma\notin R$ (resp. $\alpha + \gamma\notin R$) for any $\gamma\in R_K^+$.
\end{definition}

\begin{remark}\label{R3}
 If  $\alpha, \beta \in R^+_i$ are the lowest and highest roots respectively,  $\alpha=\beta$ if and only if  $\dim{\mathfrak{m}_i}=2$.
\end{remark}

The following Propositions and Lemma are very important for the proofs  of Theorems \ref{T1}, \ref{T2},  \ref{T3} and Corollaries  \ref{CC1}, \ref{C2}.
\begin{proposition}\label{PP3}
 Let $\alpha^{'}, \beta^{'} $ be the lowest and highest root respectively in $R_i^+$, for some $i=1, \dots , s$.  If there exist $\alpha, \beta\in R_i$ such that
such that $\alpha|_{\mathfrak{a}_1}=\beta|_{\mathfrak{a}_1}$ and $\alpha(h)=-\beta(h)$ for any $h \in \mathfrak{s}$, then we have  $\alpha^{'}|_{\mathfrak{a}_1}=-\beta^{'}|_{\mathfrak{a}_1}$ and $\alpha^{'}(h)=\beta^{'}(h)$ for any $h \in \mathfrak{s}$. Also, we have that  $\alpha^{'}=\beta^{'}$ if and only if   $\beta=-\alpha$.
\end{proposition}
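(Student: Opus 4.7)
I begin with normalization. Because $\alpha(h)=-\beta(h)$ on $\mathfrak{s}$ while every root of $R_i$ restricts to $\pm\xi_i$ on $\mathfrak{t}\simeq\sqrt{-1}\mathfrak{s}$, exactly one of $\alpha,\beta$ lies in $R_i^+$ and the other in $R_i^-$; rename so that $\alpha\in R_i^+$, set $\gamma:=-\beta\in R_i^+$ and $\mu:=\alpha|_{\mathfrak{a}_1}=-\gamma|_{\mathfrak{a}_1}$. Since $\alpha',\beta'\in R_i^+$ both restrict to $\xi_i$ on $\mathfrak{t}$, the identity $\alpha'(h)=\beta'(h)$ for $h\in\mathfrak{s}$ is automatic.

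Next I translate the $\mathfrak{a}_1$-claim into a module-theoretic statement. By Proposition~\ref{P1}, $\mathfrak{m}_{\xi_i}$ is $\ad(\mathfrak{k}^{\mathbb{C}})$-irreducible, and since $\mathfrak{s}^{\mathbb{C}}$ acts as the single character $\xi_i$ it remains irreducible as an $\ad(\mathfrak{k}_1^{\mathbb{C}})$-module. Its $\mathfrak{a}_1$-weight spaces are the lines $\mathbb{C}E_\delta$ for $\delta\in R_i^+$, so the weight system is multiplicity-free; the lowest (respectively highest) root $\alpha'$ (respectively $\beta'$) corresponds to the unique anti-dominant (respectively dominant) $\mathfrak{a}_1$-weight $\mu_L:=\alpha'|_{\mathfrak{a}_1}$ (respectively $\mu_H:=\beta'|_{\mathfrak{a}_1}$). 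The claim $\alpha'|_{\mathfrak{a}_1}=-\beta'|_{\mathfrak{a}_1}$ is thus equivalent to $\mu_L=-\mu_H$, i.e.\ to $\mathfrak{m}_{\xi_i}$ being self-dual as an $\ad(\mathfrak{k}_1^{\mathbb{C}})$-module. The hypothesis furnishes both $\mu$ (from $\alpha$) and $-\mu$ (from $\gamma$) as weights of $\mathfrak{m}_{\xi_i}$, and therefore $\mathfrak{m}_{\xi_i}$ and $\mathfrak{m}_{-\xi_i}\cong\mathfrak{m}_{\xi_i}^{\ast}$ share the weight $\mu$. Combining this with multiplicity-freeness and the explicit equivalences $\ad(\sqrt{-1}h_{\Lambda_{i_q}})\colon\mathfrak{n}^i_1\to\mathfrak{n}^i_2$ from Lemma~\ref{L2} (cf.\ Remark~\ref{R1}), the $\ad(K_1)$-structure on $\mathfrak{m}_i=(\mathfrak{m}_{\xi_i}\oplus\mathfrak{m}_{-\xi_i})^\tau$ is forced into one of the self-dual alternatives of the trichotomy (either $\ad(K_1)$-irreducible of quaternionic type, or $\ad(K_1)$-reducible with two equivalent summands); the complex-type alternative (with $\mathfrak{m}_{\xi_i}\not\cong\mathfrak{m}_{-\xi_i}$) is ruled out. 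In either surviving case $\mathfrak{m}_{\xi_i}\cong\mathfrak{m}_{-\xi_i}$ and $\mu_H=-\mu_L$.

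For the equivalence $\alpha'=\beta'\Leftrightarrow\beta=-\alpha$: if $\alpha'=\beta'$, Remark~\ref{R3} gives $\dim\mathfrak{m}_i=2$, whence $R_i^+=\{\alpha'\}$, and the hypothesis pair must be $(\alpha',-\alpha')$, yielding $\beta=-\alpha$. Conversely, $\beta=-\alpha$ together with $\alpha|_{\mathfrak{a}_1}=\beta|_{\mathfrak{a}_1}$ forces $\mu=0$; then $E_\alpha$ is a trivial $\mathfrak{a}_1$-weight vector of the irreducible $\mathfrak{m}_{\xi_i}$, and the same multiplicity-freeness plus Lemma~\ref{L2} analysis forces $\mathfrak{m}_{\xi_i}=\mathbb{C}E_\alpha$; hence $\dim\mathfrak{m}_i=2$ and Remark~\ref{R3} gives $\alpha'=\beta'=\alpha$.

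The principal obstacle is the self-duality step: distinct irreducible representations of a semisimple Lie algebra can in general share weights, so the argument must leverage the specific root-space origin of $\mathfrak{m}_{\xi_i}$ (giving multiplicity-freeness of its $\mathfrak{a}_1$-weight system) together with the explicit $\ad(\sqrt{-1}h_{\Lambda_{i_q}})$-equivalences from Lemma~\ref{L2} that relate $\mathfrak{n}^i_1$ and $\mathfrak{n}^i_2$ when $\mathfrak{m}_i$ is $\Ad(K_1)$-reducible.
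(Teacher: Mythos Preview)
Your reduction to a representation-theoretic question is clean, and the normalization, the multiplicity-freeness of the $\mathfrak{a}_1$-weight system of $\mathfrak{m}_{\xi_i}$, and the observation that $\alpha'(h)=\beta'(h)$ on $\mathfrak{s}$ is automatic are all correct. The real gap is the self-duality step. From the hypothesis you extract that $\mu$ and $-\mu$ are both $\mathfrak{a}_1$-weights of the multiplicity-free irreducible $\mathfrak{k}_1^{\mathbb{C}}$-module $\mathfrak{m}_{\xi_i}$, and you then assert that this, together with Lemma~\ref{L2}, ``rules out the complex-type alternative''. But Lemma~\ref{L2} is conditional on $\mathfrak{m}_i$ being $\Ad(K_1)$-reducible; it gives you no leverage in the irreducible complex-type case, which is exactly the case you must exclude. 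And the implication ``$\mu,-\mu$ both weights of a multiplicity-free irreducible $\Rightarrow$ self-dual'' is false in general: for $\mathfrak{k}_1^{\mathbb{C}}=\mathfrak{sl}_3$ the module $\operatorname{Sym}^3\mathbb{C}^3$ is multiplicity-free and not self-dual, yet $\mu=2e_1+e_2$ and $-\mu=e_2+2e_3$ (and even $\mu=0$) are both weights. The same counterexample breaks your argument for ``$\beta=-\alpha\Rightarrow\alpha'=\beta'$'': a zero weight in a multiplicity-free irreducible does not force the module to be one-dimensional. So as written, the complex-type alternative is not excluded and the conclusion $\mu_H=-\mu_L$ does not follow.

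By contrast, the paper's argument avoids self-duality entirely and is far more elementary. It uses the single observation that $\alpha|_{\mathfrak{a}_1}=\beta|_{\mathfrak{a}_1}$ forces $\langle\alpha,\delta\rangle=\langle\beta,\delta\rangle$ and $|\alpha|=|\beta|$ for every $\delta\in R_K$, so the $\delta$-string through $\alpha$ and through $\beta$ have identical shape; hence $\alpha+\delta\in R\Leftrightarrow\beta+\delta\in R$. One then walks $\alpha$ up to the highest root $\beta'$ by successively adding roots $\gamma,\gamma_1,\dots,\gamma_{k-1}\in R_K^+$, and the \emph{same} sequence added to $\beta$ produces an element of $R_i^-$ whose negative is the lowest root $\alpha'$. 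The required identities $\alpha'|_{\mathfrak{a}_1}=-\beta'|_{\mathfrak{a}_1}$ and $\alpha'|_{\mathfrak{s}}=\beta'|_{\mathfrak{s}}$ then drop out of the bookkeeping. If you want to salvage the module-theoretic route, you would need an argument specific to modules arising as $\mathfrak{m}_{\xi_i}$ in a flag manifold (not just multiplicity-freeness), and that essentially amounts to redoing the paper's root-string computation inside the weight lattice.
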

\begin{proof}
  Since $\alpha|_{\mathfrak{a}_1}=\beta|_{\mathfrak{a}_1}$, it follows that $\alpha+\delta \in R$ if and only if $\beta+\delta \in R$ for any $\delta \in R_K$. Since  $\alpha(h)=-\beta(h)$ for any $h \in \mathfrak{s}$,  it follows that one of $\alpha, \beta$ belongs to $R^+_i$, the other belongs to $R^-_i$.

  If   $\alpha \in R^+_i$ and  $\alpha$  is the highest root, it follows that $-\beta$ is the lowest root.
 We set $\alpha^{'}=\alpha$ and $\beta^{'}=-\beta$, it is easy to check that the conclusion holds.

 If   $\alpha \in R^+_i$ and  $\alpha$  is not the highest root,
 then there exists $\gamma \in R^+_K$ such that $\alpha+\gamma \in R^+_i$. Since $(\alpha+\gamma)(h)=\alpha(h)$ and $(\beta+\gamma)(h)=\beta(h)$ for any $h\in \mathfrak{s}$, we have $(\alpha+\gamma)(h)=-(\beta+\gamma)(h)$. Since $\alpha+\gamma \in R^+_i$,  it follows that $\beta+\gamma \in R^-_i$. Set $\alpha+\gamma=\beta_1$ and $\beta+\gamma=\omega_1$. If $\beta_1$  is not the highest root, then there exists $\gamma_1 \in R_K$ such that $\beta_1+\gamma_1 \in R^+_i $ and $\omega_1+\gamma_1 \in R^-_i$. We do this several times, until we get $\beta_k=\beta_{k-1}+\gamma_{k-1}$,  where  $\beta_k\in R^+_i$ and $\beta_k+\gamma^{'} \notin R$ for any $\gamma^{'}\in R^+_K$. This implies that $\beta_k$ is the highest root in $R^+_i$.  Then it follows that $\omega_k=(\omega_{k-1}+\gamma_{k-1})\in R^-_i$ and $\omega_k+\gamma^{'} \notin R$ for any $\gamma^{'}\in R^+_K$. This implies that $-\omega_k \in R^+_i$  is the lowest root. Since $\beta_k=\alpha+\gamma+\gamma_1+\cdots+\gamma_{k-1}$ and $\omega_k=\beta+\gamma+\gamma_1+\cdots+\gamma_{k-1}$, it follows that $\beta_k|_{\mathfrak{a}_1}=\omega_k|_{\mathfrak{a}_1}$.  Since $\beta_k(h)=(\alpha+\gamma+\gamma_1+\cdots+\gamma_{k-1})(h)=\alpha(h)$ and $\omega_k(h)=(\beta+\gamma+\gamma_1+\cdots+\gamma_{k-1})(h)=\beta(h)$ for any $h\in \mathfrak{s}$, it follows that $\beta_k(h)=-\omega_k(h)$ for any $h\in \mathfrak{s}$.

 Next we prove that $\alpha^{'}=\beta^{'}$ if and only if $\alpha=-\beta$.

 If  $\alpha=-\beta$,  since $\alpha\mid_{\mathfrak{a}_1}=\beta\mid_{\mathfrak{a}_1}$, it follows that $\alpha\mid_{\mathfrak{a}_1}=\beta\mid_{\mathfrak{a}_1}=0$, which implies that $\dim \mathfrak{m}_i=2$. Then it follows that  $\alpha^{'}=\beta^{'}$.

 If $\alpha^{'}=\beta^{'}$, by Remark \ref{R3} we have  $\dim \mathfrak{m}_i=2$. Since $\alpha(h)=-\beta(h)$ for any $h\in \mathfrak{s}$, it follows that $\alpha= -\beta$.
\end{proof}

\begin{lemma}\label{Le3}
An  $\Ad(K)$-irreducible submodule $\mathfrak{m}_{i}\ (i\in \{1, \dots,  s \})$ is  $\Ad(K_1)$-reducible given by $\mathfrak{m}_i = \mathfrak{n}_1^i \oplus\mathfrak{n}_2^i$ if and only if $\alpha|_{\mathfrak{a}_1}=-\beta|_{\mathfrak{a}_1}$ and $\alpha(h)=\beta(h)$ for any $h \in \mathfrak{s}$, where $\alpha, \beta \in R^+_i$ are the lowest and highest roots respectively, and  $\mathfrak{a}_1$ is  the Cartan subalgebra of $\mathfrak{k}_1$. Moreover, if $\dim \mathfrak{m}_i\neq 2$ and $\mathfrak{m}_i$ is  reducible as an $\Ad(K_1)$-submodule, we have $\mathfrak{m}_i=\mathfrak{n}^i_1\oplus\mathfrak{n}^i_2$ and
\begin{equation}\label{L34}
\begin {cases} \mathfrak{n}^i_1=U(\mathfrak{k}_1)(A_{\alpha}+A_{-\beta}), \\
\mathfrak{n}^i_2=U(\mathfrak{k}_1)(A_{\alpha}-A_{-\beta}), \end {cases}
\end{equation}
 where $U(\mathfrak{k}_1)$ is the universal enveloping algebra of $\mathfrak{k}_1$ and the action of $U(\mathfrak{k}_1)$
on $\mathfrak{n}^i_{1}$ and   $\mathfrak{n}^i_{2}$ is determined by the adjoint representation of $\mathfrak{g}$.
\end{lemma}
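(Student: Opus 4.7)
The plan is to prove both directions by complexifying and invoking the highest weight theory of $\mathfrak{k}_1^{\mathbb{C}}$-modules, translating back to the real form only at the end to produce the explicit formula (\ref{L34}).

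For the forward direction, I would start from Lemma \ref{L2}, which already provides $\mathfrak{m}_i = \mathfrak{n}_1^i \oplus \mathfrak{n}_2^i$ with the two summands equivalent as $\Ad(K_1)$-modules. Complexifying and applying Proposition \ref{P1}, $\mathfrak{m}_i^{\mathbb{C}} = \mathfrak{m}_{\xi_i} \oplus \mathfrak{m}_{-\xi_i}$. Because $\mathfrak{s}^{\mathbb{C}}$ acts by the character $\pm\xi_i$ on each piece, each $\mathfrak{m}_{\pm\xi_i}$ remains irreducible as a $\mathfrak{k}_1^{\mathbb{C}}$-module, and the equivalence of $\mathfrak{n}_1^i,\mathfrak{n}_2^i$ forces $\mathfrak{m}_{\xi_i} \cong \mathfrak{m}_{-\xi_i}$ as $\mathfrak{k}_1^{\mathbb{C}}$-modules. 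The highest weight vector of $\mathfrak{m}_{\xi_i}$ is $E_\beta$, where $\beta \in R_i^+$ is the highest root (so $[E_\gamma,E_\beta]=0$ for every $\gamma \in R_K^+$), of $\mathfrak{a}_1^{\mathbb{C}}$-weight $\beta|_{\mathfrak{a}_1}$; similarly the highest weight vector of $\mathfrak{m}_{-\xi_i}$ is $E_{-\alpha}$, with $\alpha \in R_i^+$ the lowest root, of weight $-\alpha|_{\mathfrak{a}_1}$. Equating highest weights yields $\alpha|_{\mathfrak{a}_1} = -\beta|_{\mathfrak{a}_1}$, while $\alpha(h) = \beta(h) = \xi_i(h)$ for $h \in \mathfrak{s}$ is automatic from $\alpha,\beta \in R_i^+$.

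For the backward direction, the hypothesis makes $E_\beta$ and $E_{-\alpha}$ weight vectors of the same $\mathfrak{a}_1^{\mathbb{C}}$-weight, both annihilated by $\ad(E_\gamma)$ for every $\gamma \in R_K^+$. Hence $E_\beta \pm E_{-\alpha}$ are two linearly independent highest weight vectors of the same weight, and $V_\pm := U(\mathfrak{k}_1^{\mathbb{C}})(E_\beta \pm E_{-\alpha})$ are distinct irreducible $\mathfrak{k}_1^{\mathbb{C}}$-submodules with $V_+ \oplus V_- = \mathfrak{m}_i^{\mathbb{C}}$. To descend to the real form I would work with $A_\alpha \pm A_{-\beta}$, which are nonzero elements of $\mathfrak{m}_i$ since $\dim\mathfrak{m}_i \ne 2$ gives $\alpha \ne \beta$ by Remark \ref{R3}. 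Expanding
\[
A_\alpha + A_{-\beta} = (E_\alpha - E_\beta) - (E_{-\alpha} - E_{-\beta}),
\]
the components lie in $\mathfrak{m}_{\xi_i}$ and $\mathfrak{m}_{-\xi_i}$ respectively, and by Proposition \ref{PP3} the assignment $\gamma \mapsto \delta$ with $\gamma|_{\mathfrak{a}_1} = -\delta|_{\mathfrak{a}_1}$ is a bijection of $R_i^+$ pairing the lowest with the highest. Propagating this bijection through $U(\mathfrak{k}_1^{\mathbb{C}})$ shows that $U(\mathfrak{k}_1)(A_\alpha \pm A_{-\beta})$ is a $\tau$-stable real form of one of $V_+$ or $V_-$, which gives (\ref{L34}).

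The main obstacle is the careful bookkeeping in this last step: one must rule out the possibility that $U(\mathfrak{k}_1)(A_\alpha + A_{-\beta})$ or $U(\mathfrak{k}_1)(A_\alpha - A_{-\beta})$ fills out all of $\mathfrak{m}_i$ rather than landing in a proper $\Ad(K_1)$-summand. This requires fixing the $\mathfrak{k}_1^{\mathbb{C}}$-intertwiner $\phi:\mathfrak{m}_{\xi_i} \to \mathfrak{m}_{-\xi_i}$ sending $E_\beta \mapsto E_{-\alpha}$ and verifying that, compatibly with complex conjugation $\tau$, it sends $E_\alpha \mapsto E_{-\beta}$ as well. The check amounts to propagating the Weyl-basis identities $N_{\gamma,\delta} = -N_{-\gamma,-\delta}$ through iterated $\ad(E_{-\gamma})$ with $\gamma \in R_K^+$, using Proposition \ref{PP3} at each step to maintain the pairing between "opposite" roots of $R_i^+$. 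Once this normalization is pinned down, the reality and $\tau$-invariance of $A_\alpha \pm A_{-\beta}$ directly yield the explicit decomposition (\ref{L34}).
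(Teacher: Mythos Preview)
Your approach is correct and takes a genuinely different route from the paper's.

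For the forward direction, the paper argues by contradiction: if $\alpha|_{\mathfrak{a}_1}\neq\beta|_{\mathfrak{a}_1}$ for all distinct $\alpha,\beta\in R_i$, then $U(\mathfrak{k}_1)(X)=\mathfrak{m}_i$ for every nonzero $X$, contradicting reducibility; then Proposition~\ref{PP3} transfers the conclusion to the lowest and highest roots. Your argument is more structural: you complexify, observe that $\mathfrak{m}_{\xi_i}$ and $\mathfrak{m}_{-\xi_i}$ stay irreducible under $\mathfrak{k}_1^{\mathbb{C}}$, and note that real reducibility of $\mathfrak{m}_i$ forces $\mathfrak{m}_{\xi_i}\cong\mathfrak{m}_{-\xi_i}$ as $\mathfrak{k}_1^{\mathbb{C}}$-modules (since otherwise the only $\tau$-stable submodules would be $0$ and $\mathfrak{m}_i^{\mathbb{C}}$). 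Comparing highest weights then gives $\beta|_{\mathfrak{a}_1}=-\alpha|_{\mathfrak{a}_1}$ directly. This is cleaner and explains conceptually \emph{why} the condition involves the extreme roots.

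For the backward direction, the paper stays in the real form throughout and verifies by the explicit bracket table (Lemma~\ref{L1}) that $U(\mathfrak{k}_1)(A_\alpha\pm A_{-\beta})$ are $\Ad(K_1)$-stable, then uses $\ad(\sqrt{-1}h_{\Lambda_{i_q}})$ to swap them and conclude their sum is all of $\mathfrak{m}_i$. You instead construct the decomposition in $\mathfrak{m}_i^{\mathbb{C}}$ via highest weight vectors and then descend. The obstacle you flag---that the intertwiner $\phi$ normalized by $\phi(E_\beta)=E_{-\alpha}$ must also send $E_\alpha\mapsto E_{-\beta}$, so that $A_\alpha\pm A_{-\beta}$ really lies in a graph submodule---is exactly the content the paper hides in ``it is easy to check.'' In both approaches one ultimately needs a structure-constant identity (tracing back to $N_{\gamma,\delta}=-N_{-\gamma,-\delta}$) to see that the cyclic module is proper; you are simply more explicit about where this enters. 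Your route buys conceptual clarity and generalizes more readily; the paper's route avoids complexification bookkeeping at the cost of a longer bracket computation.
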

\begin{proof}
 We prove the necessity of  Lemma \ref{Le3} first. If $ {\mathfrak{m}_i}$ is reducible as $\Ad(K_1)$-module and $\dim {\mathfrak{m}_i}=2$,  Remark \ref{R3} implies  that $\beta=\alpha$.  It is obvious that the results hold.

       Let $ {\mathfrak{m}_i}$ be reducible as $\Ad(K_1)$-module and  $\dim {\mathfrak{m}_i}\neq2$.
      Assume that $\alpha|_{\mathfrak{a}_1}\neq\beta|_{\mathfrak{a}_1}$ for any $\alpha\neq \beta \in R_i$. Then it follows that $U(\mathfrak{k}_1)(X)=\mathfrak{m}_i$ for any nonzero vector $X\in \mathfrak{m}_i$, which means that $\mathfrak{m}_i$ is irreducible as $\Ad(K_1)$-module and this is a contradiction. Hence we have there that exist $\alpha \neq \beta \in R_i$ such that $\alpha|_{\mathfrak{a}_1}=\beta|_{\mathfrak{a}_1}$. Since $\alpha,  \beta \in R_i$, it follows that $\alpha(h)=\beta(h)$ or $\alpha(h)=-\beta(h)$ for any $h\in \mathfrak{s}$. Since $\alpha\neq\beta $ we have $\alpha(h)=-\beta(h)$ for any $h\in \mathfrak{s}$. By Proposition \ref{PP3} we have $\alpha^{'}|_{\mathfrak{a}_1}=-\beta^{'}|_{\mathfrak{a}_1}$ and $\alpha^{'}(h)=\beta^{'}(h)$ for any $h \in \mathfrak{s}$, where $\alpha^{'}$ and $\beta^{'}$ are the lowest root and highest root respectively.

Conversely, assume that   $\alpha|_{\mathfrak{a}_1}=-\beta|_{\mathfrak{a}_1}$ and $\alpha(h)=\beta(h)$ for any $h \in \mathfrak{s}$, where $\alpha, \beta \in R^+_i$ are the lowest and highest roots respectively. If $\alpha=\beta$, by Remark \ref{R3} we obtain that $\dim \mathfrak{m}_i=2$,  and Remark \ref{R2} implies that $\mathfrak{m}_i$ is reducible as $\Ad(K_1)$-module. If $\alpha\neq\beta$,  we set
\begin{equation}\label{LLL3}
\begin {cases} \mathfrak{n}^i_1=U(\mathfrak{k}_1)(A_{\alpha}+A_{-\beta}), \\
\mathfrak{n}^i_2=U(\mathfrak{k}_1)(A_{\alpha}-A_{-\beta}). \end {cases}
\end{equation}

Since $\alpha|_{\mathfrak{a}_1}=-\beta|_{\mathfrak{a}_1}$, it follows that $\alpha+\gamma \in R$ if and only if $\beta-\gamma \in R$, where $\gamma \in R^+_K$. By Lemma \ref{L1} we have
\begin{equation}\label{LL3}
\begin{cases}
[\sqrt{-1}h_{\gamma}, A_{\alpha}\pm A_{-\beta}]=(\alpha(h_{\gamma})(B_{\alpha}\pm B_{-{\beta}}),\\
\\
[\sqrt{-1}h_{\gamma}, B_{\alpha}\pm B_{-\beta}]=-\alpha(h_{\gamma})(A_{\alpha}\pm A_{-{\beta}}),\\
\\
[A_{\gamma}, A_{\alpha}\pm A_{-\beta}]=N_{\gamma, \alpha}A_{\gamma+\alpha}\pm N_{\gamma, -\beta}A_{\gamma-\beta},\\
\\
[B_{\gamma}, A_{\alpha}\pm A_{-\beta}]=(-N_{\alpha, \gamma}B_{\alpha+\gamma})\pm(-N_{-\beta, \gamma}B_{-\beta+\gamma}),\\
\\
[B_{\gamma}, B_{\alpha}\pm B_{-\beta}]=(-N_{\gamma, \alpha}A_{\gamma+\alpha})\pm(-N_{\gamma, -\beta}A_{\gamma-\beta}),
\end{cases}
\end{equation}
where $\gamma\in R^+_K$.  If $\alpha +\gamma \notin R$, we have $N_{\alpha, \gamma}=0$, and if $\beta -\gamma \notin R$, we have $N_{\beta, -\gamma}=0$.  By (\ref{LL3})
it is easy to check that $\mathfrak{n}^i_1$ and $\mathfrak{n}^i_2$ are  $\Ad(K_1)$-invariant irreducible submodules,  and $\mathfrak{n}^i_{2}=[\sqrt{-1}h_{\Lambda_{i_q}}, \mathfrak{n}^i_{1}]$,
  $\mathfrak{n}^i_{1}=[\sqrt{-1}h_{\Lambda_{i_q}}, \mathfrak{n}^i_{2}]$
  for any $q \in \{1, \dots, p\}$.
    Here  $\alpha\mid_{\mathfrak{t}}=a_1\overline{\alpha_{i_1}}+\cdots+a_p\overline{\alpha_{i_p}}$  for any $\alpha \in R_{i}^+$,
     where $i_1, \dots, i_p \in \{1, \dots, r\}$ and
      $\alpha _j|_{\mathfrak{t}}=\overline{\alpha_j}, \; (j=1, \dots, r)$. Then it follows that $\mathfrak{n}^i_1+\mathfrak{n}^i_2$ is  $\Ad(K)$-invariant submodule. Since $\mathfrak{n}^i_1, \mathfrak{n}^i_2\subset \mathfrak{m}_i$ and $\mathfrak{m}_i$ is irreducible $\Ad(K)$-module, this implies that $\mathfrak{m}_i=\mathfrak{n}^i_1\oplus\mathfrak{n}^i_2$.
\end{proof}

\begin{proposition}\label{PP4}
Let   $\alpha^{'}|_{\mathfrak{a}_1}=-\beta^{'}|_{\mathfrak{a}_1}$ and $\alpha^{'}(h)=\beta^{'}(h)$ for any $h \in \mathfrak{s}$, where $\alpha^{'}, \beta^{'} \in R_i$ are the lowest root and highest root respectively. Then for any $\alpha \in  R^+_i$ there exists $\beta \in R^+_i$ such that $\alpha|_{\mathfrak{a}_1}=-\beta|_{\mathfrak{a}_1}$ and $\alpha(h)=\beta(h)$ for any $h \in \mathfrak{s}$.
\end{proposition}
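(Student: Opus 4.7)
The plan is first to reduce the statement to a combinatorial fact about restrictions to $\mathfrak{a}_1$, and then to establish that fact via a representation-theoretic comparison of two natural decompositions of $\mathfrak{m}_i^{\mathbb{C}}$. For the reduction, I will use that every root in $R_i^+$ has the same restriction to $\mathfrak{s}$ (the one determined by $\xi_i$), so that the condition $\alpha(h) = \beta(h)$ for $h\in\mathfrak{s}$ holds automatically as soon as $\beta\in R_i^+$. Thus it will suffice to show that the $\mathfrak{a}_1$-weight multiset $\{\alpha|_{\mathfrak{a}_1}:\alpha\in R_i^+\}$ is invariant under the involution $\lambda\mapsto -\lambda$; the existence of $\beta$ paired with $\alpha$ will follow at once.

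The hypothesis, via Lemma~\ref{Le3}, will force $\mathfrak{m}_i$ to be $\Ad(K_1)$-reducible, and Lemma~\ref{L2} will then supply a decomposition $\mathfrak{m}_i=\mathfrak{n}^i_1\oplus\mathfrak{n}^i_2$ into two equivalent irreducible $\Ad(K_1)$-modules. Complexifying and comparing with Proposition~\ref{P1} will yield
\[
(\mathfrak{n}^i_1)^{\mathbb{C}}\oplus(\mathfrak{n}^i_2)^{\mathbb{C}}\;=\;\mathfrak{m}_i^{\mathbb{C}}\;=\;\mathfrak{m}_{\xi_i}\oplus\mathfrak{m}_{-\xi_i}
\]
as $\mathfrak{k}_1^{\mathbb{C}}$-modules. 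The key observation I will exploit is that $\mathfrak{m}_{\pm\xi_i}$ are irreducible not only as $\mathfrak{k}^{\mathbb{C}}$-modules but also as $\mathfrak{k}_1^{\mathbb{C}}$-modules, since $\mathfrak{s}^{\mathbb{C}}$ is central in $\mathfrak{k}^{\mathbb{C}}$ and therefore acts by a single scalar character on each $\mathfrak{k}^{\mathbb{C}}$-irreducible summand; consequently every $\mathfrak{k}_1^{\mathbb{C}}$-stable subspace of $\mathfrak{m}_{\pm\xi_i}$ is automatically $\mathfrak{s}^{\mathbb{C}}$-stable and hence $\mathfrak{k}^{\mathbb{C}}$-stable.

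The main step, which I expect to be the central obstacle, will be to upgrade the equality of the two decompositions to an isomorphism $\mathfrak{m}_{\xi_i}\cong\mathfrak{m}_{-\xi_i}$ of $\mathfrak{k}_1^{\mathbb{C}}$-modules. By the uniqueness of the decomposition into $\mathfrak{k}_1^{\mathbb{C}}$-irreducibles, both decompositions of $\mathfrak{m}_i^{\mathbb{C}}$ must have the same number of summands; since the right-hand side has exactly two, each $(\mathfrak{n}^i_j)^{\mathbb{C}}$ must already be irreducible, for otherwise the left-hand side would contribute at least four irreducible summands. Combining this with $\mathfrak{n}^i_1\cong\mathfrak{n}^i_2$ places the two left-hand summands in a single isotypic component, and matching with the right then forces $\mathfrak{m}_{\xi_i}\cong\mathfrak{m}_{-\xi_i}$. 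Reading off the $\mathfrak{a}_1^{\mathbb{C}}$-weight multisets of these isomorphic modules yields $\{\alpha|_{\mathfrak{a}_1}:\alpha\in R_i^+\}=\{-\alpha|_{\mathfrak{a}_1}:\alpha\in R_i^+\}$, which produces the required $\beta\in R_i^+$ with $\alpha|_{\mathfrak{a}_1}=-\beta|_{\mathfrak{a}_1}$ for every $\alpha\in R_i^+$.
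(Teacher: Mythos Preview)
Your argument is correct, and it takes a genuinely different route from the paper's. The paper proceeds by an elementary root-chain construction: given $\alpha\in R_i^+$, it writes $\alpha=\alpha'+\gamma_0+\cdots+\gamma_k$ as the lowest root $\alpha'$ plus a string of roots $\gamma_j\in R_K^+$, then defines $\beta=\beta'-\gamma_0-\cdots-\gamma_k$; the hypothesis $\alpha'|_{\mathfrak{a}_1}=-\beta'|_{\mathfrak{a}_1}$ guarantees step by step that each partial difference lies in $R$, so $\beta\in R_i^+$, and a direct computation gives $\alpha|_{\mathfrak{a}_1}=-\beta|_{\mathfrak{a}_1}$ and $\alpha(h)=\beta(h)$ on $\mathfrak{s}$. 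This is explicit and self-contained, invoking neither Lemma~\ref{L2} nor Lemma~\ref{Le3}.

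Your approach instead feeds the hypothesis back through Lemma~\ref{Le3} to obtain $\Ad(K_1)$-reducibility, and then extracts the weight symmetry from the isomorphism $\mathfrak{m}_{\xi_i}\cong\mathfrak{m}_{-\xi_i}$ of $\mathfrak{k}_1^{\mathbb{C}}$-modules. The trade-off: the paper's argument is constructive (it hands you $\beta$ as an explicit function of $\alpha$) and independent of the surrounding lemmas, whereas yours is cleaner conceptually---it identifies the underlying reason as a module isomorphism rather than a combinatorial coincidence---but leans on Lemmas~\ref{L2} and~\ref{Le3}, whose proofs are themselves hands-on computations with root vectors. One small point of phrasing: when you argue that each $(\mathfrak{n}_j^i)^{\mathbb{C}}$ is irreducible ``for otherwise the left-hand side would contribute at least four irreducible summands,'' note that even a single reducible summand would already give at least three, which is enough for the contradiction; the ``four'' uses the equivalence $\mathfrak{n}_1^i\cong\mathfrak{n}_2^i$ you state only in the next sentence. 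Either way the counting works.
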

\begin{proof}
 First we assume that $\alpha^{'} \neq\beta^{'}$.    Any $\alpha \in R_i$  can be expressed as $\alpha=\alpha^{'}+\gamma_0+\gamma_1+\cdots+\gamma_k$,  and $(\alpha^{'}+\gamma_0+\gamma_1+\cdots+\gamma_j)\in R^+_i,  (j\leq k)$.  If $k=0$, this means that $\alpha=\alpha^{'}$. Since $\alpha^{'}|_{\mathfrak{a}_1}=-\beta^{'}|_{\mathfrak{a}_1}$, it follows that $\alpha^{'}+\delta \in R$ if and only if $-\beta^{'}+\delta \in R$ for any $\delta \in R_K$. This implies that $\beta=-(-\beta^{'}+\gamma_0+\gamma_1+\cdots+\gamma_k)=(\beta^{'}-\gamma_0-\gamma_1-\cdots-\gamma_k) \in R^+_i$.

  Now we prove that $\alpha|_{\mathfrak{a}_1}=-\beta|_{\mathfrak{a}_1}$ and $\alpha(h)=\beta(h)$ for any $h \in \mathfrak{s}$.

  Since $\alpha=\alpha^{'}+\gamma_0+\gamma_1+\cdots+\gamma_k$, it follows that $\alpha(h)=(\alpha^{'}+\gamma_0+\gamma_1+\cdots+\gamma_k)(h)=\alpha^{'}(h)+(\gamma_0+\gamma_1+\cdots+\gamma_k)(h)$ for any $h\in \mathfrak{a}_1$.  Since $\beta=\beta^{'}-\gamma_0-\gamma_1-\cdots-\gamma_k$, it follows that $\beta(h)=(\beta^{'}-\gamma_0-\gamma_1+\cdots-\gamma_k)(h)=\beta^{'}(h)-(\gamma_0+\gamma_1+\cdots+\gamma_k)(h)$ for any $h\in \mathfrak{a}_1$. Since $\alpha^{'}|_{\mathfrak{a}_1}=-\beta^{'}|_{\mathfrak{a}_1}$, it follows that $\alpha^{'}(h)=-\beta^{'}(h)$ for any $h\in \mathfrak{a}_1$, hence we have $\alpha(h)=-\beta(h)$ for any $h\in \mathfrak{a}_1$, which means that $\alpha|_{\mathfrak{a}_1}=-\beta|_{\mathfrak{a}_1}$.
  Since $\gamma(h)=0$  and $\alpha^{'}(h)=\beta^{'}(h)$ for any $h\in \mathfrak{s}$, it follows that $\alpha(h)=(\alpha^{'}+\gamma_0+\gamma_1+\cdots+\gamma_k)(h)=\alpha^{'}(h)=\beta^{'}(h)=
  (\beta^{'}-\gamma_0-\gamma_1-\cdots-\gamma_k)(h)=\beta(h)$ for any $h \in \mathfrak{s}$.

 If  $\alpha^{'}=\beta^{'}$, we have  $\alpha=\beta=\alpha^{'}$.
\end{proof}

\section{Invariant metrics on $M$-spaces}

 Let $\mathfrak{n}$ be the tangent space of a $M$-space $G/K_1$ with $\mathfrak{n}=\mathfrak{s}\oplus \mathfrak{m}$.
 Let $\{\sqrt{-1}h_{\Lambda_1}, \dots, \sqrt{-1}h_{\Lambda_r}\}$ be a basis of $\mathfrak{s}$ with respect to  $\Pi_M=\{\alpha_1, \dots, \alpha_r\}$.  There are two cases for  $\Ad(K_1)$-invariant irreducible decomposition of the tangent space $\mathfrak{n}$, based on properties of the isotropy representation
$\mathfrak{m}=\mathfrak{m}_1\oplus \cdots\oplus \mathfrak{m}_s$ of the flag manifold $G/K$. Then we consider $G$-invariant metrics on $G/K_1$ which are $\Ad(K_1)$-invariant corresponding to the $\Ad(K_1)$-irreducible decomposition
of $\mathfrak{n}$.

\smallskip
{\bf Case A.}  Assume that $\mathfrak{m}_i$ for any $i \in \{1, \dots, s\}$ in the decomposition (\ref{2}) is irreducible as an $\Ad({K}_1)$-submodule,  so we get that
 \begin{equation}\label{equ3}
 \mathfrak{n}=\mathbb{R}\sqrt{-1}h_{\Lambda_1}\oplus\cdots\oplus \mathbb{R}\sqrt{-1}h_{\Lambda_r}\oplus\mathfrak{m}_1\oplus\cdots\oplus \mathfrak{m}_s
 \end{equation}
 is the $\Ad(K_1)$-irreducible decomposition.

  Let  $\langle \cdot,\cdot \rangle=B(\Lambda\cdot,\cdot)$ be an $\mathrm{Ad}({K}_1)$-invariant scalar product on $\mathfrak{n}$, where  $\Lambda$ is the associated operator.
 Therefore,   $G$-invariant metrics on $G/K_1$ which are $\mathrm{Ad}({K}_1)$-invariant are defined by
 \begin{equation}\label{equ5}
\langle \cdot,\cdot \rangle=\Lambda|_{\mathfrak{s}}+ \lambda_1B(\cdot,\cdot)|_{{\mathfrak{m}}_1}+\cdots+ \lambda_s B(\cdot,\cdot)|_{{\mathfrak{m}}_s},
 \end{equation}
where $A|_{\mathfrak{s}}$ is positive definite symmetry matrix.

\smallskip
 {\bf Case B.} Assume that there exists an $r' \in \{1, \dots, s\}$ such that $ \mathfrak{m}_{i}$, $i=1, \dots, r'$ are $\Ad(K_1)$-reducible submodules, and $ \mathfrak{m}_{i}$ are  $\Ad(K_1)$-irreducible submodules for $i=r'+1, \dots, s$.

 Set $\mathfrak{m}_{i}=\mathfrak{n}^i_1\oplus \mathfrak{n}^i_{2}$, $i=1,\dots, r'$, where $\mathfrak{n}^i_{1}$ and $\mathfrak{n}^i_{2}$ are  equivalent and irreducible $\Ad(K_1)$-submodules.
It follows that
 \begin{equation}\label{equ6}
 \mathfrak{n}=\mathbb{R}\sqrt{-1}h_{\Lambda_1}\oplus\cdots\oplus \mathbb{R}\sqrt{-1}h_{\Lambda_r}\oplus(\mathfrak{n}^1_1\oplus \mathfrak{n}^1_{2})\oplus\cdots\oplus (\mathfrak{n}^{r'}_1\oplus \mathfrak{n}^{r'}_{2})\oplus\mathfrak{m}_{r'+1}\oplus\cdots\oplus \mathfrak{m}_{s}
 \end{equation}
 is the $\Ad(K_1)$-irreducible decomposition.

  Let  $\langle \cdot,\cdot \rangle=B(\Lambda\cdot,\cdot)$ be an $\mathrm{Ad}({K}_1)$-invariant scalar product on $\mathfrak{n}$, where  $\Lambda$ is the associated operator. We fix  basis $\mathcal{B}=\{\sqrt{-1}h_{\Lambda_i}, A_{\alpha}, B_{\alpha}, i=1,\dots, r, \alpha \in R^+_M\}$ adapted to the decomposition (\ref{equ6}).  Let $A$ and $A|_{\mathfrak{p}}$ be the matrix representation of $\Lambda$ and $\Lambda|_{\mathfrak{p}}$ respectively, where $\mathfrak{p}$ denotes a subspace of $\mathfrak{n}$.
 Then $G$-invariant metrics on $G/K_1$ which are $\mathrm{Ad}({K}_1)$-invariant are defined by

 \begin{equation}\label{equ7}
A=\left (
\begin{array}{ccccc}
A|_{\mathfrak{s}}& 0 & \dots & 0 \\
0 & A|_{\mathfrak{m}_1} &  & \\
 \vdots&\dots&\ddots& \\
 0 & \dots & \dots & A|_{\mathfrak{m}_s}
\end{array}
\right ),
\end{equation}
where $A|_{\mathfrak{s}}$ is a positive definite symmetric matrix, and $A|_{\mathfrak{m}_i}$, $i=1,\dots, r'$ has the form

 \begin{equation}\label{equ8}
A|_{\mathfrak{m}_i}=\left (
\begin{array}{cc}
\mu^i_1\mathrm{Id} |_{\mathfrak{n}^i_1}& A^i_{21}  \\
A^i_{12} & \mu^i_2\mathrm{Id}|_{\mathfrak{n}^i_2}
\end{array}
\right ),  \mu^i_1, \mu^i_2> 0.
\end{equation}
 The  block matrices $A^i_{12}$ and  $A^i_{21}$ correspond to  $\Ad(K_1)$-equivariant  maps $\phi_1: \mathfrak{n}^i_1\rightarrow \mathfrak{n}^i_2$ and $\phi_2: \mathfrak{n}^i_2\rightarrow \mathfrak{n}^i_1$ respectively.
 Moreover, the symmetry of $\Lambda$ implies that $A^i_{12}=A^i_{21}$. Consequently, for any vector $X_j\in \mathfrak{n}^i_j\subset \mathfrak{m}_i, j=1, 2$, it is

 \begin{equation}\label{equ8}
 \Lambda X_j=\Lambda|_{\mathfrak{m}_i}X_j=\mu^i_jX_j+A^i_{jk}X_j,\quad A^i_{jk}:\mathfrak{n}^i_j\rightarrow \mathfrak{n}^i_k, \;j\neq k \in \{1, 2\}.
 \end{equation}

Since $\mathfrak{m}_p$, $p=r'+1,\dots, s$ are irreducible as $\Ad(K_1)$-modules, it follows that $\Lambda|_{\mathfrak{m}_p}=\mu_p\mathrm{Id}|_{\mathfrak{m}_p}$, for some $\mu_p>0$.

\section{Riemannian g.o. spaces}

 Let $(M=G/K, g)$ be a homogeneous Riemannian manifold with  $G$  a compact connected semisimple Lie group. Let $\mathfrak{g}$ and $\mathfrak{k}$ be the Lie algebras of $G$ and $K$ respectively and
  $\mathfrak{g}=\mathfrak{k}\oplus \mathfrak{m}$
be a reductive decomposition.

\begin{definition} \label {D3}  A nonzero vector $X \in \mathfrak{g}$ is called a geodesic vector if the curve (\ref{1}) is a geodesic.

\end{definition}

\begin{lemma}[\cite{Ko-Va}]  \label{L3} A nonzero vector  $X \in \mathfrak{g}$ is a geodesic vector if and only if

\begin{equation}\label{12}
  \langle [X,Y]_{\mathfrak{m}},X_{\mathfrak{m}} \rangle =0
\end{equation}
for all $Y \in \mathfrak{m}$.
Here the subscript $\mathfrak{m}$ denotes the projection into $\mathfrak{m}$.
\end{lemma}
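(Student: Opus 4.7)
The plan is to identify $\gamma(t) = \exp(tX)(o)$ as an integral curve of the fundamental vector field $X^*$ on $M = G/K$ associated with $X$, and then to convert the geodesic equation into a pointwise condition at the origin that the Koszul formula reduces to the claimed bracket identity.

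First I would observe that $\dot\gamma(t) = X^*_{\gamma(t)}$, so $\gamma$ is a geodesic if and only if $\nabla_{X^*} X^* = 0$ along $\gamma$. Because $X^*$ is equivariant under the flow $\exp(tX)$ and the metric $g$ is $G$-invariant, the vector $(\nabla_{X^*} X^*)_{\gamma(t)}$ is the push-forward of $(\nabla_{X^*} X^*)_o$; hence $\gamma$ is a geodesic if and only if $(\nabla_{X^*} X^*)_o = 0$. Under the identification $T_o M \cong \mathfrak{m}$, we have $X^*_o = X_\mathfrak{m}$ and $\{Y^*_o : Y \in \mathfrak{m}\}$ spans $T_o M$, so the geodesic equation is equivalent to
$$
\langle \nabla_{X^*} X^*, Y^*\rangle_o = 0 \quad \text{for all } Y \in \mathfrak{m}.
$$

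Next, I would apply Koszul's formula with $U = V = X^*$ and $W = Y^*$, obtaining
$$
2\langle \nabla_{X^*} X^*, Y^*\rangle = 2\,X^*\langle X^*, Y^*\rangle - Y^*\langle X^*, X^*\rangle - 2\langle [X^*, Y^*], X^*\rangle.
$$
The $G$-invariance of $g$ makes $X^*$ and $Y^*$ Killing, so the identity $(\mathcal{L}_K g)(A,B)=0$ yields $K\langle A, B\rangle = \langle [K,A], B\rangle + \langle A, [K, B]\rangle$ for any vector fields $A,B$. Substituting this into the two scalar-function terms, the right-hand side telescopes to $2\langle X^*, [X^*, Y^*]\rangle$. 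Using the bracket identity $[X^*, Y^*] = -[X,Y]^*$ for fundamental vector fields of a left action and evaluating at $o$ yields
$$
\langle \nabla_{X^*} X^*, Y^*\rangle_o = -\langle [X, Y]_\mathfrak{m}, X_\mathfrak{m}\rangle,
$$
whose vanishing for every $Y \in \mathfrak{m}$ is precisely condition (\ref{12}).

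The main obstacle is really only careful book-keeping: one must pin down the sign in $[X^*, Y^*] = \pm [X, Y]^*$ according to the action convention, and must resist the temptation to compute $X^*\langle X^*, Y^*\rangle$ pointwise without invoking the Killing property of $X^*$. Once these conventions are settled, the identifications $X^*_o = X_\mathfrak{m}$ and $Y^*_o = Y$ collapse the formula to the stated criterion. Since the result is due to Kowalski and Vanhecke, an equally natural approach, and the one the paper evidently takes, is simply to cite \cite{Ko-Va}.
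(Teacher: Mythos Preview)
Your argument is correct: the reduction to $(\nabla_{X^*}X^*)_o=0$ via $G$-invariance, the Koszul computation, and the Killing identity all check out, and the final evaluation at $o$ indeed yields $\langle\nabla_{X^*}X^*,Y^*\rangle_o=-\langle[X,Y]_{\mathfrak m},X_{\mathfrak m}\rangle$. As you anticipated in your last sentence, the paper gives no proof at all---it simply states the lemma with attribution to \cite{Ko-Va}---so your proposal supplies strictly more than the paper does.
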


A useful description of homogeneous geodesics (\ref{1}) is provided by the following :
\begin{proposition} [\cite{Al-Ar}]  \label{P2}

 Let $(M=G/K, g)$ be a homogeneous Riemannian manifold and $\Lambda$ be the associated operator. Let $a\in \mathfrak{k}$ and $x \in \mathfrak{m}$. Then the following are equivalent:

 (1)\  The orbit $\gamma(t)=\mathrm{exp}t(a+x)\cdot o$ of the one-parameter subgroup $\mathrm{exp}t(a+x)$ through the point $o=eK$ is a geodesic of $M$.

 (2) \ $[a+x, \Lambda x] \in \mathfrak{k}$.

 (3) \ $\langle [a, x], y \rangle = \langle x, [x, y]_{\mathfrak{m}} \rangle \ \mbox {for all} \  y \in \mathfrak{m}$.

 (4) \ $\langle [a+x, y]_{\mathfrak{m}}, x \rangle=0 \  \mbox {for all} \ y \in \mathfrak{m}$.
 \end{proposition}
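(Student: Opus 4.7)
\medskip

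\noindent\textbf{Proof plan for Proposition \ref{P2}.} The natural strategy is to establish the cyclic chain of implications $(1)\Leftrightarrow(4)\Leftrightarrow(3)$ and $(4)\Leftrightarrow(2)$, using Lemma \ref{L3} as the entry point and then translating the resulting identity into the three equivalent forms by routine manipulations with the reductive decomposition $\mathfrak{g}=\mathfrak{k}\oplus\mathfrak{m}$, the $\Ad(K)$-invariance of $\langle\cdot,\cdot\rangle$, and the $\ad$-invariance of $B$.

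\medskip

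The equivalence $(1)\Leftrightarrow(4)$ is essentially a rewriting of Lemma \ref{L3}. Set $X=a+x\in\mathfrak{g}$ with $a\in\mathfrak{k}$ and $x\in\mathfrak{m}$; then $X_\mathfrak{m}=x$, and (\ref{12}) reads $\langle[a+x,y]_\mathfrak{m},x\rangle=0$ for all $y\in\mathfrak{m}$, which is exactly condition (4). For $(4)\Leftrightarrow(3)$ I would split the bracket as $[a+x,y]_\mathfrak{m}=[a,y]+[x,y]_\mathfrak{m}$; here the first summand lies in $\mathfrak{m}$ because $\mathfrak{g}=\mathfrak{k}\oplus\mathfrak{m}$ is a reductive decomposition, so $[\mathfrak{k},\mathfrak{m}]\subset\mathfrak{m}$. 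Then (4) becomes
\begin{equation*}
\langle[a,y],x\rangle+\langle[x,y]_\mathfrak{m},x\rangle=0\qquad\text{for all }y\in\mathfrak{m}.
\end{equation*}
Since the metric is $G$-invariant, the operator $\ad(a)$ is skew-symmetric on $\mathfrak{m}$ with respect to $\langle\cdot,\cdot\rangle$, so $\langle[a,y],x\rangle=-\langle y,[a,x]\rangle=-\langle[a,x],y\rangle$. Substituting gives $\langle[a,x],y\rangle=\langle x,[x,y]_\mathfrak{m}\rangle$ for all $y\in\mathfrak{m}$, which is (3); running the computation backwards gives the reverse implication.

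\medskip

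For $(4)\Leftrightarrow(2)$ I would translate $\langle\cdot,\cdot\rangle$ into $B(\Lambda\cdot,\cdot)$. Using symmetry of $\Lambda$ and $\Lambda x\in\mathfrak{m}$,
\begin{equation*}
\langle[a+x,y]_\mathfrak{m},x\rangle=B\bigl([a+x,y]_\mathfrak{m},\Lambda x\bigr)=B\bigl([a+x,y],\Lambda x\bigr),
\end{equation*}
the last equality because $\Lambda x\in\mathfrak{m}$ and $B$ is $\mathfrak{k}$-$\mathfrak{m}$ orthogonal. By $\ad$-invariance of $B$, this equals $-B(y,[a+x,\Lambda x])$. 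Hence (4) is equivalent to $B(y,[a+x,\Lambda x])=0$ for all $y\in\mathfrak{m}$, i.e.\ $[a+x,\Lambda x]\perp_B\mathfrak{m}$, i.e.\ $[a+x,\Lambda x]\in\mathfrak{k}$, which is (2).

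\medskip

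The argument is essentially bookkeeping; the only subtlety I anticipate is making sure that each bracket is correctly projected onto $\mathfrak{k}$ or $\mathfrak{m}$. In particular, one must invoke $[\mathfrak{k},\mathfrak{m}]\subset\mathfrak{m}$ to drop the subscript on $[a,y]$ in the $(4)\Leftrightarrow(3)$ step, and the $B$-orthogonality $\mathfrak{k}\perp_B\mathfrak{m}$ to drop the subscript on $[a+x,y]_\mathfrak{m}$ after pairing against $\Lambda x\in\mathfrak{m}$ in the $(4)\Leftrightarrow(2)$ step. Once those projections are handled, the chain of equivalences is immediate.
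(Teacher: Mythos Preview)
Your argument is correct and is the standard proof of this equivalence. Note, however, that the paper does not actually supply a proof of Proposition~\ref{P2}: the result is quoted from \cite{Al-Ar} and stated without proof, so there is no ``paper's own proof'' to compare against. Your chain $(1)\Leftrightarrow(4)$ via Lemma~\ref{L3}, $(4)\Leftrightarrow(3)$ via $\Ad(K)$-invariance of $\langle\cdot,\cdot\rangle$, and $(4)\Leftrightarrow(2)$ via $\ad$-invariance of $B$ together with $\mathfrak{k}\perp_B\mathfrak{m}$ is exactly the argument one finds in the original reference, and all the projection bookkeeping you flag is handled correctly.
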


 An important corollary of Proposition \ref{P2} is the following:

 \begin{corollary} [\cite{Al-Ar}]  \label{C3}

 Let $(M=G/K, g)$ be a homogeneous Riemannian manifold. Then $(M=G/K, g)$ is a g.o. space if and only if for every $x\in \mathfrak{m}$  there exists an $a(x) \in \mathfrak{k}$  such that
 \begin{equation}\label{13}
 [a(x)+x, \Lambda x] \in \mathfrak{k}.
 \end{equation}
 \end{corollary}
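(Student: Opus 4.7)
The plan is to derive the corollary directly from Proposition \ref{P2} by quantifying the equivalence (1) $\Leftrightarrow$ (2) over all tangent vectors at the origin. I would begin by reducing the g.o. condition to a statement at the base point: since $G$ acts transitively by isometries on $M$, and the property of being an orbit of a one-parameter subgroup $\exp(tZ)$ is preserved under $G$-translation (a $G$-translate of such an orbit is the orbit of $\exp(t\Ad(g)Z)$), every geodesic of $M$ is a $G$-translate of one through $o = eK$. Therefore $(M,g)$ is a $G$-g.o. space if and only if every geodesic $\gamma$ with $\gamma(0) = o$ is homogeneous.

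Next, I would identify $T_o M$ with $\mathfrak{m}$ in the usual way coming from the reductive decomposition $\mathfrak{g} = \mathfrak{k} \oplus \mathfrak{m}$. For any $Z \in \mathfrak{g}$ written as $Z = a + x$ with $a \in \mathfrak{k}$ and $x \in \mathfrak{m}$, the orbit $\gamma(t) = \exp(t(a+x)) \cdot o$ has $\dot{\gamma}(0) = x$, because the $\mathfrak{k}$-component of $Z$ generates an infinitesimal isotropy action and contributes nothing to the velocity at $o$. By existence and uniqueness of geodesics through $o$, a geodesic is uniquely determined by its initial velocity, which ranges over the whole of $\mathfrak{m}$. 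Hence the g.o. condition is equivalent to: for every $x \in \mathfrak{m}$ there exists some $a(x) \in \mathfrak{k}$ such that the curve $t \mapsto \exp(t(a(x) + x)) \cdot o$ is a geodesic.

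Finally, I would invoke the equivalence (1) $\Leftrightarrow$ (2) of Proposition \ref{P2} with the choice $a = a(x)$: the curve $\exp(t(a(x)+x)) \cdot o$ is a geodesic if and only if $[a(x) + x, \Lambda x] \in \mathfrak{k}$. Combined with the previous step, this gives precisely the stated criterion. There is no serious obstacle here; the argument is an unwrapping of definitions. The only delicate point that deserves attention is the observation that $\dot{\gamma}(0) = x$ is independent of the choice of $a \in \mathfrak{k}$, which is exactly what permits $a(x)$ to be selected freely as a function of $x$ rather than being forced by $x$ alone.
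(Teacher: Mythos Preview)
Your proposal is correct and follows exactly the route the paper indicates: the paper does not give a detailed proof of Corollary~\ref{C3} but simply presents it as an immediate consequence of Proposition~\ref{P2} (citing \cite{Al-Ar}), and your argument fills in precisely this derivation by quantifying the equivalence (1)$\Leftrightarrow$(2) of Proposition~\ref{P2} over all $x\in\mathfrak{m}$.
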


 For later use we recall the following:

 \begin{proposition}{\rm(}\cite[Proposition 5]{Al-Ni}{\rm )}\label{P3}
Let $(M=G/H,g)$ be a compact g.o. space with associated operator $\Lambda$.
Let $X,Y \in \mathfrak{m}$ be eigenvectors $\Lambda$ with different eigenvalues $\lambda, \mu$. Then
 \begin{equation}\label{14}
[X, Y]= \frac{\lambda}{\lambda-\mu}[h, X]+\frac{\mu}{\lambda-\mu}[h, Y]
\end{equation}
for some $h\in \mathfrak{h}$.
\end{proposition}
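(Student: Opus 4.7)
The plan is to apply the geodesic criterion from Corollary \ref{C3} (equivalently, Proposition \ref{P2}(2)) to the single vector $X+Y \in \mathfrak{m}$. Since $(G/H,g)$ is a g.o. space, there exists $h\in\mathfrak{h}$ such that
\[
[h + X + Y,\ \Lambda(X+Y)] \in \mathfrak{h}.
\]
Because $X$ and $Y$ are eigenvectors of $\Lambda$ with eigenvalues $\lambda$ and $\mu$ respectively, the inner bracket simplifies via $\Lambda(X+Y)=\lambda X+\mu Y$, so all the ingredients needed for the proof are already available.

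Next I would expand the bracket, using bilinearity and the obvious identities $[X,X]=[Y,Y]=0$:
\[
[h+X+Y,\ \lambda X+\mu Y] \;=\; \lambda[h,X] + \mu[h,Y] + \mu[X,Y] - \lambda[X,Y].
\]
Regrouping the last two terms as $(\mu-\lambda)[X,Y]$ yields
\[
\lambda[h,X]+\mu[h,Y]+(\mu-\lambda)[X,Y] \in \mathfrak{h}.
\]
Now I would use the reductive decomposition $\mathfrak{g}=\mathfrak{h}\oplus\mathfrak{m}$: since $\mathfrak{m}$ is $\mathrm{Ad}(H)$-invariant, $[h,X]$ and $[h,Y]$ lie in $\mathfrak{m}$, so projecting onto $\mathfrak{m}$ annihilates only the $[X,Y]_{\mathfrak{h}}$ piece. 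The resulting equality in $\mathfrak{m}$ reads
\[
\lambda[h,X]+\mu[h,Y]+(\mu-\lambda)[X,Y]_{\mathfrak{m}}=0,
\]
and dividing through by $(\lambda-\mu)\neq 0$ gives exactly the stated formula (interpreting $[X,Y]$ as its $\mathfrak{m}$-component, which is what matters in geodesic computations).

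There is no substantial obstacle here; the statement is a direct algebraic consequence of the g.o. condition applied to $X+Y$. The only subtlety to flag is the projection step: one should make explicit that $[h,X],[h,Y]\in\mathfrak{m}$ by the $\mathrm{Ad}(H)$-invariance of $\mathfrak{m}$, so that the identity really does solve for the $\mathfrak{m}$-part of $[X,Y]$ rather than merely giving a relation modulo $\mathfrak{h}$. Aside from that, the division by $\lambda-\mu$ is justified by the hypothesis $\lambda\ne\mu$, and no deeper input (such as compactness beyond what is built into the g.o. setup, or properties of the Killing form) is required.
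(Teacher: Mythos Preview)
The paper does not supply a proof of this proposition; it is quoted from \cite{Al-Ni} and used as a black box. Your argument is the standard one and is correct: apply the g.o.\ criterion of Corollary~\ref{C3} to the vector $X+Y$, expand the bracket using $\Lambda(X+Y)=\lambda X+\mu Y$, and project onto $\mathfrak{m}$. Your remark that the identity really determines only $[X,Y]_{\mathfrak m}$ is well taken and is exactly how the proposition is used later in the paper (e.g.\ in the proof of Corollary~\ref{CC1}).
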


\begin{proposition}{\rm(}\cite{Gor}, \cite{Tam}{\rm )}\label{P5}
Let $G$ be a connected semisimple Lie group and $H\supset K$ be  compact Lie subgroups in $G$.  Let  $M_{F}$ and $M_{C}$ be the  tangent spaces of $F=H/K$ and  $C=G/H$ respectively. Then the metric $g_{a, b}= a B\mid_{M_{F}}+b B\mid_{M_{C}}, (a, b \in \mathbb{R}^+)$ is a g.o. metric on $G/K$ if and only if  for any $v_F\in M_{F}$, $v_C \in M_{C}$ there exists $X\in \mathfrak{k}$ such that
\begin{equation*}
[X, v_F]=[X+v_F, v_C]=0.
\end{equation*}
\end{proposition}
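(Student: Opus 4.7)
The strategy is to apply Corollary \ref{C3} directly to the metric $g_{a,b}$, explicitly expand the bracket condition $[Y+x,\Lambda x]\in\mathfrak{k}$, and rescale the auxiliary vector $Y$ to match the asymmetric criterion in the statement.

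First I would record the reductive structure. Since $K\subset H\subset G$ are compact and $G$ is semisimple, the $(-B)$-orthogonal decompositions give $\mathfrak{h}=\mathfrak{k}\oplus M_F$ and $\mathfrak{g}=\mathfrak{h}\oplus M_C$; these are reductive, so $[\mathfrak{k},M_F]\subset M_F$ and $[\mathfrak{h},M_C]\subset M_C$. Combining them, $\mathfrak{g}=\mathfrak{k}\oplus M_F\oplus M_C$ is the reductive decomposition of $G/K$ with $\mathfrak{m}=M_F\oplus M_C$, and the operator $\Lambda$ associated to $g_{a,b}$ acts by the scalar $a$ on $M_F$ and by $b$ on $M_C$.

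Next I would perform the core calculation. Writing a general $x\in\mathfrak{m}$ as $x=v_F+v_C$ and $\Lambda x=av_F+bv_C$, then for any $Y\in\mathfrak{k}$,
\begin{equation*}
[Y+x,\Lambda x]=a[Y,v_F]+b[Y,v_C]+(b-a)[v_F,v_C],
\end{equation*}
using $[v_F,v_F]=[v_C,v_C]=0$ and combining the cross terms. By the reductive relations recorded above, the three summands lie respectively in $M_F$, $M_C$, $M_C$ (the last because $v_F\in\mathfrak{h}$ and $[\mathfrak{h},M_C]\subset M_C$). Hence the whole bracket lies in $\mathfrak{m}$, and the inclusion $[Y+x,\Lambda x]\in\mathfrak{k}$ demanded by Corollary \ref{C3} forces it to vanish component-wise, giving
\begin{equation*}
[Y,v_F]=0\qquad\text{and}\qquad b[Y,v_C]+(b-a)[v_F,v_C]=0. \qquad(\star)
\end{equation*}

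Finally I would match $(\star)$ with the stated condition. Assuming $a\neq b$ (when $a=b$ the metric is a multiple of the Killing form on $\mathfrak{m}$, hence naturally reductive and automatically g.o., which can be treated separately), the affine substitution $Y=\tfrac{a-b}{b}X$ is a bijection of $\mathfrak{k}$ with itself and transforms $(\star)$ into
\begin{equation*}
[X,v_F]=0\qquad\text{and}\qquad [X,v_C]+[v_F,v_C]=0,
\end{equation*}
i.e.\ $[X,v_F]=[X+v_F,v_C]=0$. Therefore, by Corollary \ref{C3}, $(G/K,g_{a,b})$ is a g.o. space precisely when for every pair $(v_F,v_C)$ some such $X\in\mathfrak{k}$ exists. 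The main obstacle is purely bookkeeping, namely confirming that each commutator in the expansion lands in the expected subspace so that the geodesic equation splits cleanly into its $M_F$- and $M_C$-components; once this is done, the rescaling that eliminates the parameters $a,b$ from the final condition is immediate.
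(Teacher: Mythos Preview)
The paper does not actually prove Proposition~\ref{P5}; it merely quotes it from \cite{Gor} and \cite{Tam}. So there is no ``paper's own proof'' to compare against, and your argument stands on its own.

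Your approach via Corollary~\ref{C3} is correct and is essentially the standard derivation. The bracket expansion and the observation that each term lands in the right summand ($[Y,v_F]\in M_F$, $[Y,v_C]\in M_C$, $[v_F,v_C]\in M_C$) are fine, and they yield exactly the system $(\star)$. One minor slip: with your substitution $Y=\tfrac{a-b}{b}X$ the second equation of $(\star)$ becomes $(a-b)[X,v_C]+(b-a)[v_F,v_C]=0$, i.e.\ $[X-v_F,v_C]=0$ rather than $[X+v_F,v_C]=0$. Either use $Y=\tfrac{b-a}{b}X$ instead, or absorb the sign by replacing $X$ with $-X$; the bijection argument survives intact.

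Your handling of the degenerate case $a=b$ is also appropriate: the proposition as literally stated is only an equivalence for $a\neq b$ (when $a=b$ the metric is naturally reductive hence always g.o., whereas the algebraic condition on the right-hand side need not hold in general), so isolating that case is the right thing to do.
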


 \section{Proof of Theorem 1 and Corollary 1}

Let $G/K$ be a generalized flag manifold with $K=C(S)=S\times K_1$, where $S$ is a torus in the simple compact Lie group $G$ and $K_1$ is the semisimple part of $K$. Then the corresponding $M$-space is $G/K_1$. We denote by $\mathfrak{g}$ and  $\mathfrak{k}$ the Lie algebras of $G$ and $K$ respectively. Let $B=-$Killing form.
Then the module $\mathfrak{m}$  decomposes into a direct sum of  $\Ad(K)$-invariant irreducible submodules pairwise orthogonal with respect to $B$ (cf. (\ref{2})).

\smallskip
\noindent
{\it Proof of Theorem \ref{T1}}.

{\bf Case 1.} Assume that $\mathfrak{m}_i$ for any $i \in \{1, \dots, s\}$ in the decomposition (\ref{2}) is irreducible as an $\Ad({K}_1)$-submodule.
 Then the tangent space $\mathfrak{n}\cong T_{o}(G/K_1)$ is decomposed into irreducible $\Ad({K}_1)$-invariant submodules:
 \begin{equation}\label{15}
 \mathfrak{n}=\mathbb{R}\sqrt{-1}h_{\Lambda_1}\oplus\cdots\oplus \mathbb{R}\sqrt{-1}h_{\Lambda_r}\oplus \mathfrak{m}_1\oplus\mathfrak{m}_2\oplus\cdots\oplus \mathfrak{m}_s.
 \end{equation}
Here  $\mathfrak{s}=\mathbb{R}\sqrt{-1}h_{\Lambda_1}\oplus\cdots\oplus \mathbb{R}\sqrt{-1}h_{\Lambda_r}$.  Therefore,  an $\Ad(K_1)$-invariant inner product $\langle \cdot,\cdot \rangle=B(\Lambda\cdot,\cdot)$ is expressed as
 \begin{equation}\label{17}
\langle \cdot,\cdot \rangle=\Lambda|_{\mathfrak{s}}+ \lambda_1B(\cdot,\cdot)|_{{\mathfrak{m}}_1}+\cdots+ \lambda_s B(\cdot,\cdot)|_{{\mathfrak{m}}_s},
 \end{equation}
 where  $\Lambda$  is the associate operator on $\mathfrak{n}$.

Let $R_{\mathfrak{t}}^+=\{\xi_1,\cdots,\xi_s \}$ be the set of positive $\mathfrak{t}$-roots of the generalized flag manifold $G/K$ with $s\geq 3$. Since $R_{\mathfrak{t}}^+$ is connected,   for any $\xi, \eta \in R_{\mathfrak{t}}^+$    there exists (without loss of generality) a chain of positive $\mathfrak{t}$-roots
\begin{equation}\label{18}
\xi=\zeta_1, \zeta_2, \dots, \zeta_k=\eta,
\end{equation}
where  $\zeta_i, \zeta_{i+1}$ are adjacent $(i=1,\dots, k-1)$.

We define the   subset $\{\mathfrak{m}_{i_1}, \mathfrak{m}_{i_2}, \dots, \mathfrak{m}_{i_k}\}$ of $\{\mathfrak{m}_{1}, \mathfrak{m}_{2}, \dots, \mathfrak{m}_{s}\}$ by
\begin{equation}\label{19}
\mathfrak{m}_{i_q}=\sum_ {{\alpha \in R_{M}^+:\kappa (\alpha)=\zeta _q}} (\mathbb{R} A_\alpha +\mathbb{R} B_\alpha ), \ (q=1,\dots,k).
\end{equation}
 Since $\zeta_q, \zeta_{q+1}$ $(q=1,\dots, k-1)$ are adjacent, then either  $\zeta_q +\zeta_{q+1} \in R_{\mathfrak{t}}$ or  $\zeta_{q+1} -\zeta_q \in R_{\mathfrak{t}}$.  Therefore, we have
\begin{equation*}\label{20}
[\mathfrak{m}_{i_q}, \mathfrak{m}_{i_{q+1}}]\subseteq \left(\sum_ {{\alpha \in R_{M}^+:\kappa (\alpha)=\zeta _{q}+\zeta_{{q+1}}}} (\mathbb{R} A_\alpha +\mathbb{R} B_\alpha ) \right)\oplus\left(\sum_ {{\alpha \in R_{M}:\kappa (\alpha)=\zeta_{{q+1}}-\zeta_{{q}}}} (\mathbb{R} A_\alpha +\mathbb{R} B_\alpha )\right).
\end{equation*}
 If  $\zeta _{q}+\zeta_{{q+1}}\notin R_{\mathfrak{t}}$, then $\sum_ {\alpha \in R_{M}^+:\kappa (\alpha)=\zeta _{{q+1}}+\zeta_{{q}}} (\mathbb{R} A_\alpha +\mathbb{R} B_\alpha )=\{0\}$.  If  $\zeta _{{q+1}}-\zeta_{{q}}\notin R_{\mathfrak{t}}$, then $\sum_ {\alpha \in R_{M}:\kappa (\alpha)=\zeta _{{q+1}}-\zeta_{{q}}} (\mathbb{R} A_\alpha +\mathbb{R} B_\alpha )=\{0\}$.  Since $\zeta _{q}+\zeta_{{q+1}}\neq \pm \zeta _{q}$,  $\zeta _{q}+\zeta_{{q+1}}\neq\pm \zeta_{{q+1}}$  and  $\zeta _{q}-\zeta_{{q+1}}\neq \pm \zeta _{q}$,  $\zeta _{q}-\zeta_{{q+1}}\neq\pm \zeta_{{q+1}}$, it follows that $$  \left(\sum_ {{\alpha \in R_{M}^+:\kappa (\alpha)=\zeta _{q}+\zeta_{{q+1}}}} (\mathbb{R} A_\alpha +\mathbb{R} B_\alpha ) \oplus\sum_ {{\alpha \in R_{M}:\kappa (\alpha)=\zeta_{{q+1}}-\zeta_{{q}}}} (\mathbb{R} A_\alpha +\mathbb{R} B_\alpha )\right)\cap \left(\mathfrak{m}_{i_q}\oplus \mathfrak{m}_{i_{q+1}}\right)=\{0\}.$$  Therefore we get that
\begin{equation}\label{21}
[\mathfrak{m}_{i_q}, \mathfrak{m}_{i_{q+1}}]\cap (\mathfrak{m}_{i_q}\oplus \mathfrak{m}_{i_{q+1}})=\{0\}.
\end{equation}

 Also, since $\zeta_q, \zeta_{q+1}$ are adjacent $(q=1,\dots, k-1)$ in (\ref{18}), there exist $X\in \mathfrak{m}_{i_q}, Y \in \mathfrak{m}_{i_{q+1}}$  eigenvectors of $\Lambda$ such that $[X,Y]\neq 0$. If we had that  $\lambda_{i_q}\neq \lambda_{i_{q+1}}$,  then   Proposition \ref{P3}  implies that $[X,Y]\subset \mathfrak{m}_{i_q}\oplus \mathfrak{m}_{i_{q+1}}$, which contradicts  (\ref{21}), hence  $\lambda_{i_q}= \lambda_{i_{q+1}}, (q=1,\dots, k-1)$. Since this is true for any $\xi, \eta \in R_{\mathfrak{t}}^+$  we obtain that $\lambda_1=\lambda_2=\cdots=\lambda_s$,  and the conclusion follows.
%\begin{equation}\label{22}
%g=\langle \cdot,\cdot\rangle=\mu B(\cdot,\cdot)\mid_{\mathfrak{s}}+\lambda B(\cdot,\cdot)\mid_{\mathfrak{m}_1\oplus\mathfrak{m}_2\oplus \cdots \oplus \mathfrak{m}_s}.
%\end{equation}

\medskip

\smallskip
\noindent
{\bf Case 2.} Assume that there exists an $r' \in \{1, \dots, s\}$ such that $ \mathfrak{m}_{i}\ (i=1, \dots, r')$ are  reducible as  $\Ad(K_1)$-submodules, and $ \mathfrak{m}_{i}$ are irreducible as $\Ad(K_1)$-submodules for $i=r'+1, \dots, s$.

 By Lemma \ref{L2} we have  $\mathfrak{m}_{i}=\mathfrak{n}^i_1\oplus \mathfrak{n}^i_{2},\; (i=1,\dots, r')$, where $\mathfrak{n}^i_{1}, \mathfrak{n}^i_{2}$ are  equivalent and irreducible as $\Ad(K_1)$-submodules.
It follows that
 \begin{equation}
 \mathfrak{n}=\mathbb{R}\sqrt{-1}h_{\Lambda_1}\oplus\cdots\oplus \mathbb{R}\sqrt{-1}h_{\Lambda_r}\oplus(\mathfrak{n}^1_1\oplus \mathfrak{n}^1_{2})\oplus\cdots\oplus (\mathfrak{n}^{r'}_1\oplus \mathfrak{n}^{r'}_{2})\oplus\mathfrak{m}_{r'+1}\oplus\cdots\oplus \mathfrak{m}_{s}
 \end{equation}
 is the $\Ad(K_1)$-irreducible decomposition.  Therefore  $G$-invariant metrics on $G/K_1$ are defined  by (\ref{equ7}).

  Since $R_{\mathfrak{t}}^+=\{\xi_1,\cdots,\xi_s \}$ is connected,   for any $\xi, \eta \in R_{\mathfrak{t}}^+$    there exists %(without loss of generality)
   a chain of positive $\mathfrak{t}$-roots
\begin{equation}\label{118}
\xi=\zeta_1, \zeta_2, \dots, \zeta_k=\eta,
\end{equation}
where  $\zeta_i, \zeta_{i+1}$ are adjacent $(i=1,\dots, k-1)$.

We define the   subset $\{\mathfrak{m}_{i_1}, \mathfrak{m}_{i_2}, \dots, \mathfrak{m}_{i_k}\}$ of $\{\mathfrak{m}_{1}, \mathfrak{m}_{2}, \dots, \mathfrak{m}_{s}\}$ by
\begin{equation}\label{119}
\mathfrak{m}_{i_q}=\sum_ {{\alpha \in R_{M}^+:\kappa (\alpha)=\zeta _q}} (\mathbb{R} A_\alpha +\mathbb{R} B_\alpha ), \ (q=1,\dots,k).
\end{equation}
 Since $\zeta_q, \zeta_{q+1}$ $(q=1,\dots, k-1)$ are adjacent, it is either  $\zeta_q +\zeta_{q+1} \in R_{\mathfrak{t}}$ or  $\zeta_{q+1} -\zeta_q \in R_{\mathfrak{t}}$.  There are three possibilities for $\mathfrak{m}_{i_q}, \mathfrak{m}_{i_{q+1}}$ as $\Ad(K_1)$-modules:

\medskip
 {\bf (a)} Both of $\mathfrak{m}_{i_q}, \mathfrak{m}_{i_{q+1}}$ are irreducible as $\Ad(K_1)$-modules.

 Since  $\mathfrak{m}_{i_q}, \mathfrak{m}_{i_{q+1}}$ are irreducible as $\Ad(K_1)$-modules, it follows that the associated operator $\Lambda|_{\mathfrak{m}_{i_q}}=\mu_{i_q}\mathrm{Id}|_{\mathfrak{m}_{i_q}}, \Lambda|_{\mathfrak{m}_{i_{q+1}}}=\mu_{i_{q+1}}\mathrm{Id}|_{\mathfrak{m}_{i_{q+1}}}, (\mu_{i_q}, \mu_{i_{q+1}}> 0)$. As  in  the proof in  {\bf Case 1} we obtain that $\mu_{i_q}=\mu_{i_{q+1}}$.

\medskip
  {\bf (b)} One of  $\mathfrak{m}_{i_q}, \mathfrak{m}_{i_{q+1}}$ is irreducible as $\Ad(K_1)$-module and  the other is reducible as $\Ad(K_1)$-module.
\medskip

{\bf (b1)} Assume that $\mathfrak{m}_{i_q}$ is reducible as $\Ad(K_1)$-module.

    By Lemma \ref{L2} it follows that $\mathfrak{m}_{i_q}=\mathfrak{n}^{i_q}_1\oplus \mathfrak{n}^{i_q}_2$, where $\mathfrak{n}^{i_q}_1, \mathfrak{n}^{i_q}_2$ are equivalent and irreducible $\Ad(K_1)$-modules. Thus $\Lambda|_{\mathfrak{m}_{i_q}}$ has the form
   \begin{equation}\label{equ88}
A|_{\mathfrak{m}_{i_q}}=\left (
\begin{array}{cc}
\mu^{i_q}_1\mathrm{Id} |_{\mathfrak{n}^{i_q}_1}& A^{i_q}_{21}  \\
A^{i_q}_{12} & \mu^{i_q}_2\mathrm{Id}|_{\mathfrak{n}^{i_q}_2}
\end{array}
\right ),  \mu^{i_q}_1, \mu^{i_q}_2> 0,
\end{equation}
where $A^{i_q}_{jk}:\mathfrak{n}^{i_q}_j\rightarrow \mathfrak{n}^{i_q}_k, \; j\neq k \in \{1, 2\}$ is an $\Ad(K_1)$-equivalent map, and
 $\Lambda X=\Lambda|_{\mathfrak{m}_{i_q}}X=\mu^{i_q}_j X+A^{i_q}_{jk} X$ for any $X \in\mathfrak{n}^{i_q}_j \subset\mathfrak{m}_{i_q}, j \in \{1, 2\}$.
 Since $\mathfrak{m}_{i_{q+1}}$ is irreducible as $\Ad(K_1)$-module, it follows that $\Lambda|_{\mathfrak{m}_{i_{q+1}}}=\mu_{i_{q+1}}\mathrm{Id}|_{\mathfrak{m}_{i_{q+1}}}$ for some $\mu_{i_{q+1}}> 0$.

Since  $G/K_1$ is a g.o. space, by Corollary \ref{C3} it follows that for any $X\in \mathfrak{n}$  there exists a $k \in \mathfrak{k}_1$  such that $[k+X, \Lambda X] \in \mathfrak{k}_1$.
 We choose non zero vectors $X_1\in  \mathfrak{n}^{i_q}_1 \subset \mathfrak{m}_{i_{q}}$ and  $X_2 \in \mathfrak{m}_{i_{q+1}}$, with $[k+X_1+X_2, \Lambda(X_1+X_2)] \in \mathfrak{k}_1$.
 Since $\Lambda(X_1+X_2)=\Lambda|_{\mathfrak{m}_{i_q}}X_1+\Lambda|_{\mathfrak{m}_{i_{q+1}}}X_2=\mu^{i_q}_1 X_1+A^{i_q}_{12} X_1+\mu_{i_{q+1}} X_2$,  we obtain that

\begin{eqnarray*}
&&[k+X_1+X_2, \Lambda(X_1+X_2)]=[k+X_1+X_2, \mu^{i_q}_1 X_1+A^{i_q}_{12} X_1+\mu_{i_{q+1}} X_2]\\
&&=[k, \mu^{i_q}_1 X_1+A^{i_q}_{12} X_1]+ [k, \mu_{i_{q+1}} X_2]+(\mu_{i_{q+1}}-\mu^{i_q}_1)[X_1, X_2]+[X_1, A^{i_q}_{12} X_1]+[X_2, A^{i_q}_{12} X_1].
\end{eqnarray*}

Also,  since $\zeta _{q}+\zeta_{{q+1}}\neq \pm \zeta _{q}$,  $\zeta _{q}+\zeta_{{q+1}}\neq\pm \zeta_{{q+1}}$  and  $\zeta _{q}-\zeta_{{q+1}}\neq \pm \zeta _{q}$,  $\zeta _{q}-\zeta_{{q+1}}\neq\pm \zeta_{{q+1}}$,
   it follows that
   \begin{equation*}
   [k, \mu^{i_q}_1 X_1+A^{i_q}_{12} X_1]\in \mathfrak{m}_{i_q}, [k, \mu_{i_{q+1}} X_2]\in \mathfrak{m}_{i_{q+1}}, [X_1, A^{i_q}_{12} X_1]\in \mathfrak{k}\oplus \mathfrak{m}_{j_1}, (i_q\neq j_1 \neq i_{q+1}),
   \end{equation*}
    \begin{equation*}
    [X_2, A^{i_q}_{12} X_1] \in \mathfrak{m}_{j_2}\oplus \mathfrak{m}_{j_3},  (\mu_{i_{q+1}}-\mu^{i_q}_1)[X_1, X_2]\in \mathfrak{m}_{j_2}\oplus \mathfrak{m}_{j_3}, (i_q\neq j_2, j_3 \neq i_{q+1}).
    \end{equation*}
    Then it follows that
 \begin{equation*}
 ([k, \mu^{i_q}_1 X_1+A^{i_q}_{12} X_1]+[k, \mu_{i_{q+1}} X_2])\cap([X_1, A^{i_q}_{12} X_1]+[X_2, A^{i_q}_{12} X_1]+(\mu_{i_{q+1}}-\mu^{i_q}_1)[X_1, X_2])=\{ 0 \}.
 \end{equation*}

 Next, we prove that $A^{i_q}_{12}=0$ and $\mu^{i_q}_1=\mu_{i_{q+1}}$.

 By Remark \ref{R1} we have  that there exists $\sqrt{-1}h_{\Lambda_l}\in \mathfrak{s}$ such that $\mathfrak{n}^{i_q}_2=[\sqrt{-1}h_{\Lambda_l}, \mathfrak{n}^{i_q}_1]$ and $\mathfrak{n}^{i_q}_1=[\sqrt{-1}h_{\Lambda_l}, \mathfrak{n}^{i_q}_2]$. Assume that $A^{i_q}_{jk}\neq 0$, $j\neq k \in \{1, 2\}$ and  $A^{i_q}_{jk}$ is the matrix representation of the following map
 \begin{equation*}
 \ad(\sqrt{-1}h_{\Lambda_l}): \; \mathfrak{n}^{i_q}_j\rightarrow \mathfrak{n}^{i_q}_k, \quad j\neq k \in \{1, 2\}.
 \end{equation*}

 Since $\mathfrak{m}_{i_q}$ is reducible as $\Ad(K_1)$-module, by Lemma \ref{Le3} and Proposition \ref{PP4} we obtain that there exist $\alpha, \beta \in R^+_{i_q}$ such that $\alpha\mid_{\mathfrak{a}_1}=-\beta\mid_{\mathfrak{a}_1}$ and $\alpha(h)=\beta(h)$ for any $h\in \mathfrak{s}$.   Since $\zeta_q, \zeta_{q+1}$ are adjacent, it  follows that either   $\zeta_q +\zeta_{q+1} \in R_{\mathfrak{t}}$ or  $\zeta_{q+1} -\zeta_q \in R_{\mathfrak{t}}$. Assume that $\zeta_q +\zeta_{q+1} \in R_{\mathfrak{t}}$, this implies that there exists $ \gamma \in R^+_M$  such that $\kappa(\gamma)=\zeta_{q+1}$ and $\kappa(\alpha+\gamma)\neq 0$.
 Hence we have  that there exists $j \neq q, q+1$ such that $\kappa(\alpha+\gamma)=\xi_j \in R^+_{\mathfrak{t}}$.

We now distinguish two cases and we will get a contradiction to $A_{jk}^{i_q}\ne 0$.

{\bf (i)} Assume that $\alpha=\beta$.

 Since $\alpha=\beta$ and $\alpha\mid_{\mathfrak{a}_1}=-\beta\mid_{\mathfrak{a}_1}$, it follows that $\alpha(h)=\beta(h)=0$ for any $h\in \mathfrak{a}_1$, and this implies that $\dim \mathfrak{m}_{i_q}=2$. By Remark \ref{R2} we have $\mathfrak{n}^{i_q}_1=\mathbb{R}A_{\alpha}, \mathfrak{n}^{i_q}_2=\mathbb{R}B_{\alpha}$.

   We choose $X_1=A_{\alpha}$ and  $X_2=A_{\gamma}$. Then we have
  \begin{eqnarray*}
 &&[X_1, A^{i_q}_{12}X_1]+[X_2, A^{i_q}_{12} X_1]+(\mu^{i_q}_1-\mu_{i_{q+1}})[X_1, X_2]=[A_{\alpha}, [\sqrt{-1}h_{\Lambda_l}, A_{\alpha}]\\
 &&+
 [A_{\gamma}, [\sqrt{-1}h_{\Lambda_l}, A_{\alpha}]]+(\mu_{i_{q+1}}-\mu^{i_q}_1)[A_{\alpha}, A_{\gamma}].
 \end{eqnarray*}

  It is easy to get  that
   \begin{eqnarray*}
   &&[X_1, A^{i_q}_{12}X_1]=2\sqrt{-1}\alpha(h_{\Lambda_l})h_{\alpha} \in \mathfrak{s},\\
   \\
   &&[X_2, A^{i_q}_{12} X_1]=\alpha(h_{\Lambda_l})(N_{\gamma, \alpha}B_{\gamma+\alpha}+N_{\gamma, -\alpha}B_{\gamma-\alpha}),\\
   \\
    &&(\mu_{i_{q+1}}-\mu^{i_q}_1)[X_1, X_2]=(\mu_{i_{q+1}}-\mu^{i_q}_1)(N_{\gamma, \alpha}A_{\gamma+\alpha}+N_{\gamma,-\alpha}A_{\gamma-\alpha}).
    \end{eqnarray*}
    Therefore we have that
   \begin{equation}\label{equ123}
    ([X_1, A^{i_q}_{12}X_1]+[X_2, A^{i_q}_{12} X_1])\cap (\mu^{i_q}_1-\mu_{i_{q+1}})[X_1, X_2]=\{0\}.
    \end{equation}
Since $([X_1, A^{i_q}_{12}X_1]+[X_2, A^{i_q}_{12} X_1])\notin \mathfrak{k}_1$ and $ (\mu^{i_q}_1-\mu_{i_{q+1}})[X_1, X_2]\notin \mathfrak{k}_1$, by  (\ref{equ123}) it follows that  $(\mu_{i_{q+1}}-\mu^{i_q}_1)[X_1, X_2]=2(\mu_{i_{q+1}}-\mu^{i_q}_1)(N_{\gamma, \alpha}A_{\gamma+\alpha}+N_{\gamma,-\alpha}A_{\gamma-\alpha})=0$ and $[X_1, A^{i_q}_{12}X_1]=0$. It is easy to check that $(\mu_{i_{q+1}}-\mu^{i_q}_1)[X_1, X_2]=2(\mu_{i_{q+1}}-\mu^{i_q}_1)(N_{\gamma, \alpha}A_{\gamma+\alpha}+N_{\gamma,-\alpha}A_{\gamma-\alpha})=0$ if and only if $\mu_{i_{q+1}}=\mu^{i_q}_1$. It is obvious that $[X_1, A^{i_q}_{12}X_1]=2\sqrt{-1}\alpha(h_{\Lambda_l})h_{\alpha}\neq0$, which is a contradiction, so we have $A^{i_q}_{12}=A^{i_q}_{21}=0$.
\smallskip

    {\bf (ii)} Assume that  $\alpha\neq \beta$.

     We will prove that $ (\gamma+\beta)\neq(\gamma\pm\alpha)\neq (\gamma-\beta)$.

 Assuming that $(\gamma+\beta)=(\gamma+\alpha)$, it follows that $(\gamma+\beta)(h)=(\gamma+\alpha)(h)$ for any $h\in \mathfrak{a}_1$. This implies that $ \beta(h)=\alpha(h)$ for any $h\in \mathfrak{a}_1$. Since  $-\beta(h)=\alpha(h)$ for any $h\in \mathfrak{a}_1$, it follows  that $\beta(h)=\alpha(h)=0$ for any $h\in\mathfrak{a}_1$, this implies  that $\dim\mathfrak{m}_{i_q}=2$. By Remark \ref{R3} and Proposition \ref{PP3} we have $\beta=-\alpha$. But $\beta(h)=\alpha(h)$ for any $h\in\mathfrak{s}$, this implies that $\alpha, \beta \in R^{+}_{i_q}$,  which is a contradiction. Hence we have $(\gamma+\beta)\neq(\gamma+\alpha)$.

 Assuming that $(\gamma+\beta)=(\gamma-\alpha)$, it follows that $(\gamma+\beta)(h)=(\gamma-\alpha)(h)$ for any $h\in \mathfrak{s}$. This implies that $ \beta(h)=-\alpha(h)$ for any $h\in \mathfrak{s}$.  But  $\beta(h)=\alpha(h)$ for any $h\in \mathfrak{s}$, which is a contradiction. Hence we have $(\gamma+\beta)\neq(\gamma-\alpha)$.

 We prove that $(\gamma\pm\alpha)\neq (\gamma-\beta)$ by the same method as above.

We choose $X_1=A_{\alpha}+A_{-\beta}$ and $X_2=A_{\gamma}$.

 %If $\kappa(\alpha+\beta)\notin R_{\mathfrak{t}}$ or $j_2\neq j_1\neq j_3$, it follows that $[X_1, A^{i_q}_{12}X_1]\cap [X_2, A^{i_q}_{12} X_1]=\{0\}$. This implies that     $[X_2, A^{i_q}_{12} X_1]=0$. But  $[X_2, A^{i_q}_{12} X_1]=\alpha(h_{\Lambda_l})(N_{\gamma, \alpha}B_{\gamma+\alpha}+N_{\gamma, -\alpha}B_{\gamma-\alpha}+N_{\gamma, -\beta}B_{\gamma-\beta}+N_{\gamma, \beta}B_{\gamma+\beta})\neq 0$,   which is a contradiction, so we  have $A^{i_q}_{jk}=0$.

 %If $\kappa(\alpha+\beta)in R_{\mathfrak{t}}$ or $ j_1= j_2$ or $j_1= j_3$,

 Since
 \begin{eqnarray*}
    &&(\mu_{i_{q+1}}-\mu^{i_q}_1)[X_1, X_2]=(\mu_{i_{q+1}}-\mu^{i_q}_1)(N_{\gamma, \alpha}A_{\gamma+\alpha}+N_{\gamma,-\alpha}A_{\gamma-\alpha}+N_{\gamma, -\beta}A_{\gamma-\beta}+N_{\gamma, \beta}A_{\gamma+\beta}),\\
    \\
 &&[X_1, A^{i_q}_{12}X_1]=(2\sqrt{-1}\alpha(h_{\Lambda_l})(h_{\alpha}-h_{-\beta})-2\alpha(h_{\Lambda_l})N_{\alpha, -\beta}B_{\alpha-\beta})\in \mathfrak{k}, \\
 \\
 &&
 [X_2, A^{i_q}_{12} X_1]=\alpha(h_{\Lambda_l})(N_{\gamma, \alpha}B_{\gamma+\alpha}+N_{\gamma, -\alpha}B_{\gamma-\alpha}-N_{\gamma, -\beta}B_{\gamma-\beta}-N_{\gamma, \beta}B_{\gamma+\beta})\notin \mathfrak{k},
 \end{eqnarray*}

it follows that
\begin{equation}
(\mu_{i_{q+1}}-\mu^{i_q}_1)[X_1, X_2]\cap([X_1, A^{i_q}_{12}X_1]+[X_2, A^{i_q}_{12} X_1])=\{0\}.
\end{equation}

 Since  $ (\gamma+\beta)\neq(\gamma\pm\alpha)\neq (\gamma-\beta)$, it is easy to check that
    $(\mu_{i_{q+1}}-\mu^{i_q}_1)[X_1, X_2]=(\mu_{i_{q+1}}-\mu^{i_q}_1)(N_{\gamma, \alpha}A_{\gamma+\alpha}+N_{\gamma,-\alpha}A_{\gamma-\alpha}+N_{\gamma, -\beta}A_{\gamma-\beta}+N_{\gamma, \beta}A_{\gamma+\beta})\in \mathfrak{k}_1$ if and only if  $\mu^{i_q}_1=\mu_{i_{q+1}}$.

Since $[X_1, A^{i_q}_{12}X_1]\in \mathfrak{k}$ and $[X_2, A^{i_q}_{12} X_1]\notin\mathfrak{k}$, it follows that
\begin{equation}\label{d1}
[X_1, A^{i_q}_{12}X_1]\cap[X_2, A^{i_q}_{12} X_1]=\{0\}.
\end{equation}
This implies that $[X_2, A^{i_q}_{12} X_1]\in \mathfrak{k}_1$
if and only if
\begin{equation}\label{u1}
 [X_2, A^{i_q}_{12} X_1]=\alpha(h_{\Lambda_l})(N_{\gamma, \alpha}B_{\gamma+\alpha}+N_{\gamma, -\alpha}B_{\gamma-\alpha}-N_{\gamma, -\beta}B_{\gamma-\beta}-N_{\gamma, \beta}B_{\gamma+\beta})=0.
 \end{equation}
Since  $ (\gamma+\beta)\neq(\gamma\pm\alpha)\neq (\gamma-\beta)$ and $N_{\gamma, \alpha}\neq0$,
it follows that
 \begin{equation*}
[X_2, A^{i_q}_{12} X_1]=\alpha(h_{\Lambda_l})(N_{\gamma, \alpha}B_{\gamma+\alpha}+N_{\gamma, -\alpha}B_{\gamma-\alpha}-N_{\gamma, -\beta}B_{\gamma-\beta}-N_{\gamma, \beta}B_{\gamma+\beta})\neq 0,
 \end{equation*}
which contradicts with (\ref{u1}). Hence we get $A^{i_{q}}_{12}=0.$

  We  prove  $\mu^{i_q}_2=\mu_{i_{q+1}}$ by the same method. Consequently, we have $A^{i_q}_{jk}=0$, for $j\neq k \in \{1, 2\}$ and $\mu^{i_q}_1=\mu^{i_q}_2=\mu_{i_{q+1}}$.

\medskip

{\bf (b2)} Assume that $\mathfrak{m}_{i_{q+1}}$ is reducible as $\Ad(K_1)$-module.

 By Lemma \ref{L2} it follows that $\mathfrak{m}_{i_{q+1}}=\mathfrak{n}^{i_{q+1}}_1\oplus \mathfrak{n}^{i_{q+1}}_2$, where $\mathfrak{n}^{i_{q+1}}_1, \mathfrak{n}^{i_{q+1}}_2$ are equivalent and irreducible $\Ad(K_1)$-modules. Thus $\Lambda|_{\mathfrak{m}_{i_{q+1}}}$ has the form
   \begin{equation}\label{equ88}
A|_{\mathfrak{m}_{i_{q+1}}}=\left (
\begin{array}{cc}
\mu^{i_{q+1}}_1\mathrm{Id} |_{\mathfrak{n}^{i_{q+1}}_1}& A^{i_{q+1}}_{21}  \\
A^{i_{q+1}}_{12} & \mu^{i_{q+1}}_2\mathrm{Id}|_{\mathfrak{n}^{i_{q+1}}_2}
\end{array}
\right ),  \mu^{i_{q+1}}_1,\  \mu^{i_{q+1}}_2> 0,
\end{equation}
where $A^{i_{q+1}}_{jk}:\mathfrak{n}^{i_{q+1}}_j\rightarrow \mathfrak{n}^{i_{q+1}}_k, \; j\neq k \in \{1, 2\}$ is  an $\Ad(K_1)$-equivalent map. We can prove that $A^{i_{q+1}}_{21}=A^{i_{q+1}}_{12}=0$ and $\mu^{i_{q+1}}_1=\mu^{i_{q+1}}_2=\mu_{i_q}$ by the same method as above, where $\Lambda\mid_{\mathfrak{m}_{i_q}}=\mu_{i_q}\mathrm{Id}\mid_{\mathfrak{m}_{i_q}}$.

\medskip
  {\bf (c)} Both $\mathfrak{m}_{i_q}, \mathfrak{m}_{i_{q+1}}$ are reducible as $\Ad(K_1)$-modules.

  By Lemma \ref{L2} it follows that $\mathfrak{m}_{i_q}=\mathfrak{n}^{i_q}_1\oplus \mathfrak{n}^{i_q}_2$ and $\mathfrak{m}_{i_{q+1}}=\mathfrak{n}^{i_{q+1}}_1\oplus \mathfrak{n}^{i_{q+1}}_2$, where $\mathfrak{n}^{i_q}_1, \mathfrak{n}^{i_q}_2$ are equivalent and irreducible $\Ad(K_1)$-modules, and $\mathfrak{n}^{i_{q+1}}_1, \mathfrak{n}^{i_{q+1}}_2$ are  equivalent and irreducible $\Ad(K_1)$-modules. Thus $\Lambda|_{\mathfrak{m}_{i_q}}$ has  the form
   \begin{equation}\label{equ81}
A|_{\mathfrak{m}_{i_q}}=\left (
\begin{array}{cc}
\mu^{i_q}_1\mathrm{Id} |_{\mathfrak{n}^{i_q}_1}& A^{i_q}_{21}  \\
A^{i_q}_{12} & \mu^{i_q}_2\mathrm{Id}|_{\mathfrak{n}^{i_q}_2}
\end{array}
\right ), \  \mu^{i_q}_1, \ \mu^{i_q}_2> 0,
\end{equation}
and   $\Lambda|_{\mathfrak{m}_{i_{q+1}}}$ has the form
 \begin{equation}\label{equ82}
A|_{\mathfrak{m}_{i_{q+1}}}=\left (
\begin{array}{cc}
\mu^{i_{q+1}}_1\mathrm{Id} |_{\mathfrak{n}^{i_{q+}}_1}& A^{i_{q+1}}_{21}  \\
A^{i_{q+1}}_{12} & \mu^{i_{q+1}}_2\mathrm{Id}|_{\mathfrak{n}^{i_{q+1}}_2}
\end{array}
\right ), \  \mu^{i_{q+1}}_1,\  \mu^{i_{q+1}}_2> 0.
\end{equation}

Since  $G/K_1$ is a g.o. space, by Corollary \ref{C3} it follows that for any $X\in \mathfrak{n}$  there exists a $k \in \mathfrak{k}_1$  such that $[k+X, \Lambda X] \in \mathfrak{k}_1$.
 We choose non zero vectors $X_1\in  \mathfrak{n}^{i_q}_1 \subset \mathfrak{m}_{i_{q}}$ and  $X_2 \in \mathfrak{n}^{i_{q+1}}_1\subset \mathfrak{m}_{i_{q+1}}$, with $[k+X_1+X_2, \Lambda(X_1+X_2)] \in \mathfrak{k}_1$.
 Since $\Lambda(X_1+X_2)=\Lambda|_{\mathfrak{m}_{i_q}}X_1+\Lambda|_{\mathfrak{m}_{i_{q+1}}}X_2=\mu^{i_q}_1 X_1+A^{i_q}_{12} X_1+\mu^{i_{q+1}}_1 X_2+A^{i_{q+1}}_{12} X_2$,  we obtain that
\begin{eqnarray*}
&&[k+X_1+X_2, \Lambda(X_1+X_2)]=[k+X_1+X_2, \mu^{i_q}_1 X_1+A^{i_q}_{12} X_1+\mu^{i_{q+1}}_1 X_2+A^{i_{q+1}}_{12} X_2]\\
&&=[k, \mu^{i_q}_1 X_1+A^{i_q}_{12} X_1]+ [k, \mu^{i_{q+1}}_1 X_2+A^{i_{q+1}}_{12} X_2]+(\mu^{i_{q+1}}_1-\mu^{i_q}_1)[X_1, X_2]+[X_1, A^{i_q}_{12} X_1]\\
&&+[X_1, A^{i_{q+1}}_{12} X_2]+[X_2,  A^{i_{q+1}}_{12} X_2]+ [X_2, A^{i_q}_{12} X_1].
\end{eqnarray*}
Also,  since $\zeta _{q}+\zeta_{{q+1}}\neq \pm \zeta _{q}$,  $\zeta _{q}+\zeta_{{q+1}}\neq\pm \zeta_{{q+1}}$  and  $\zeta _{q}-\zeta_{{q+1}}\neq \pm \zeta _{q}$,  $\zeta _{q}-\zeta_{{q+1}}\neq\pm \zeta_{{q+1}}$,
   it follows that
   \begin{equation*}
   [k, \mu^{i_q}_1 X_1+A^{i_q}_{12} X_1]\in \mathfrak{m}_{i_q}, [k, \mu_{i_{q+1}} X_2]\in \mathfrak{m}_{i_{q+1}}, [X_1, A^{i_q}_{12} X_1]\in \mathfrak{k}\oplus \mathfrak{m}_{j_1}, (i_q\neq j_1 \neq i_{q+1}),
   \end{equation*}
   \begin{equation*}
   [X_2, A^{i_{q+1}}_{12} X_2]\in \mathfrak{k}\oplus \mathfrak{m}_{j_2}, (i_q\neq j_2 \neq i_{q+1}, j_2 \neq j_1),
   \end{equation*}
    \begin{equation*}
    ([X_2, A^{i_q}_{12} X_1]+ [X_1, A^{i_{q+1}}_{12} X_2] +  (\mu^{i_{q+1}}_1-\mu^{i_q}_1)[X_1, X_2])\in \mathfrak{m}_{j_3}\oplus \mathfrak{m}_{j_4}, (i_q\neq j_3, j_4 \neq i_{q+1}).
    \end{equation*}
    Then it follows that
 \begin{eqnarray*}
 &&([k, \mu^{i_q}_1 X_1+A^{i_q}_{12} X_1]+[k, \mu_{i_{q+1}} X_2])\cap((\mu^{i_{q+1}}_1-\mu^{i_q}_1)[X_1, X_2]+[X_1, A^{i_q}_{12} X_1]\\
 &&
+[X_1, A^{i_{q+1}}_{12} X_2]+[X_2,  A^{i_{q+1}}_{12} X_2]+ [X_2, A^{i_q}_{12} X_1])=\{ 0 \}.
 \end{eqnarray*}

Since $\mathfrak{m}_{i_q}$ is reducible as $\Ad(K_1)$-module, by Lemma \ref{Le3} and Proposition \ref{PP4} we have there exist $\alpha_1, \beta_1 \in R^+_{i_q}$ such that $\alpha_1|_{\mathfrak{a}_1}=-\beta_1|_{\mathfrak{a}_1}$ and $\alpha_1(h)=\beta_1(h)$ for any $h \in \mathfrak{s}$.  Since $\mathfrak{m}_{i_{q+1}}$ is reducible as $\Ad(K_1)$-module, by Lemma \ref{Le3} and Proposition \ref{PP4} we have that there exist $\alpha_2, \beta_2 \in R^+_{i_{q+1}}$ such that  $\alpha_2|_{\mathfrak{a}_1}=-\beta_2|_{\mathfrak{a}_1}$ and  $\alpha_2(h)=\beta_2(h)$ for any $h \in \mathfrak{s}$ and $\kappa(\alpha_1+\alpha_2)\neq 0$. It follows that $\alpha_1+\alpha_2 \in R^+_M$.

 Next, we will show that $A^{i_q}_{12}=A^{i_{q+1}}_{12}=0$ and $\mu^{i_q}_1=\mu^{i_{q+1}}_1$.

{\bf Case (c1).}  Assume that  $A^{i_q}_{12}=0$.

 We will prove that $A^{i_{q+1}}_{12}=0$ and $\mu^{i_q}_1=\mu^{i_{q+1}}_1$.

 Assume that $A^{i_{q+1}}_{jk}\neq 0$, $j\neq k \in \{1, 2\}$ and  $A^{i_{q+1}}_{jk}$ is the matrix representation of the following map
 \begin{equation*}
 \ad(\sqrt{-1}h_{\Lambda_l}): \; \mathfrak{n}^{i_{q+1}}_j\rightarrow \mathfrak{n}^{i_{q+1}}_k, \quad j\neq k \in \{1, 2\}.
 \end{equation*}

 {\bf (i)} Assume that $\alpha_1=\beta_1, \alpha_2\neq\beta_2$.

 We choose $X_1=A_{\alpha_1}$ and $X_2=A_{\alpha_2}+A_{-\beta_2}$. Then we have
\begin{eqnarray*}
 &&(\mu^{i_{q+1}}_1-\mu^{i_q}_1)[X_1, X_2]+[X_1, A^{i_q}_{12} X_1]
+[X_1, A^{i_{q+1}}_{12} X_2]+[X_2,  A^{i_{q+1}}_{12} X_2]+ [X_2, A^{i_q}_{12} X_1]\\
&&=(\mu^{i_{q+1}}_1-\mu^{i_q}_1)[X_1, X_2]
+[X_1, A^{i_{q+1}}_{12} X_2]+[X_2,  A^{i_{q+1}}_{12} X_2]\\
&&=(\mu^{i_{q+1}}_1-\mu^{i_q}_1)[A_{\alpha_1}, A_{\alpha_2}+A_{-\beta_2}]+[A_{\alpha_1}, [\sqrt{-1}h_{\Lambda_l}, A_{\alpha_2}+A_{-\beta_2}]]\\
&&+[A_{\alpha_2}+A_{-\beta_2}, [\sqrt{-1}h_{\Lambda_l}, A_{\alpha_2}+A_{-\beta_2}]]\\
\end{eqnarray*}
and
 \begin{eqnarray*}\label{eq1}
&&(\mu^{i_{q+1}}_1-\mu^{i_q}_1)[X_1, X_2]=(\mu^{i_{q+1}}_1-\mu^{i_q}_1)(N_{\alpha_1, \alpha_2}A_{\alpha_1+\alpha_2}+N_{-\alpha_1, \alpha_2}A_{\alpha_1-\alpha_2}+N_{\alpha_1, -\beta_2}A_{\alpha_1-\beta_2}\\
&&+N_{-\alpha_1, -\beta_2}A_{\alpha_1+\beta_2}),\\
\\
&&[X_1, A^{i_{q+1}}_{12} X_2]=\alpha_2(h_{\Lambda_l})(N_{\alpha_1, \alpha_2}B_{\alpha_1+\alpha_2}+N_{\alpha_1, -\alpha_2}B_{\alpha_1-\alpha_2}-N_{\alpha_1, -\beta_2}B_{\alpha_1-\beta_2}-N_{\alpha_1, \beta_2}B_{\alpha_1+\beta_2}),\\
\\
&& [X_2,  A^{i_{q+1}}_{12} X_2]=2\sqrt{-1}\alpha_2(h_{\Lambda_l})(h_{\alpha_2}-h_{-\beta_2})-2\alpha_2(h_{\Lambda_l})N_{\alpha_2, -\beta_2}B_{\alpha_2-\beta_2}.
 \end{eqnarray*}
It follows that
\begin{equation}
(\mu^{i_{q+1}}_1-\mu^{i_q}_1)[X_1, X_2]\cap
([X_1, A^{i_{q+1}}_{12} X_2]+[X_2,  A^{i_{q+1}}_{12} X_2])=\{0\}.
\end{equation}

Since $\alpha_2\neq\beta_2$, by the same method as in {\bf (b)} we prove that $(\alpha_1\pm\alpha_2)\neq(\alpha_1\pm\beta_2)$.
It follows that $(\mu^{i_{q+1}}_1-\mu^{i_q}_1)[X_1, X_2]\in \mathfrak{k}_1$ if and only if $\mu^{i_{q+1}}_1=\mu^{i_q}_1$.

Since $[X_2,  A^{i_{q+1}}_{12} X_2]=(2\sqrt{-1}\alpha_2(h_{\Lambda_l})(h_{\alpha_2}-h_{-\beta_2})-2\alpha_2(h_{\Lambda_l})N_{\alpha_2, -\beta_2}B_{\alpha_2-\beta_2})\in \mathfrak{k}$ and $[X_1, A^{i_{q+1}}_{12} X_2]\notin \mathfrak{k}$,  it follows that
$$
[X_2,  A^{i_{q+1}}_{12} X_2]\cap[X_1, A^{i_{q+1}}_{12} X_2]=\{0\},
$$
this implies that
$
[X_1, A^{i_{q+1}}_{12} X_2] \in \mathfrak{k}_1
$
 if and only if
 $$[X_1, A^{i_{q+1}}_{12} X_2]=0.
 $$
 But $(\alpha_1\pm\alpha_2)\neq(\alpha_1\pm\beta_2),$
 it follows that $[X_1, A^{i_{q+1}}_{12} X_2]\neq0$, which is a contradiction. Hence we have $A^{i_{q+1}}_{jk}=0$.

Therefore if $A^{i_{q}}_{jk}=0$, we conclude that $A^{i_{q+1}}_{jk}=0$ and $\mu^{i_{q+1}}_1=\mu^{i_q}_1$.    Also, we prove  $\mu^{i_{q+1}}_1=\mu^{i_q}_2$ and  $\mu^{i_{q+1}}_2=\mu^{i_q}_1=\mu^{i_q}_2$ by the same method.

{\bf (ii)} Assume that  $\alpha_1\neq \beta_1, \alpha_2=\beta_2$.

We choose $X_1=A_{\alpha_1}+A_{-\beta_1}$ and $X_2=A_{\alpha_2}$.
If $A^{i_{q}}_{jk}=0$, we conclude that $A^{i_{q+1}}_{jk}=0$ and $\mu^{i_{q+1}}_1=\mu^{i_q}_1$ by the same method as in {\bf 1)}.    Also, we prove  $\mu^{i_{q+1}}_1=\mu^{i_q}_2$ and  $\mu^{i_{q+1}}_2=\mu^{i_q}_1=\mu^{i_q}_2$ by the same method.

{\bf (iii)} Assume that  $\alpha_1\neq \beta_1, \alpha_2\neq\beta_2$.

We choose $X_1=A_{\alpha_1}+A_{-\beta_1}$ and $X_2=A_{\alpha_2}+A_{-\beta_2}$.  Then we have
\begin{eqnarray*}\label{eq1}
&&(\mu^{i_{q+1}}_1-\mu^{i_q}_1)[X_1, X_2]=(\mu^{i_{q+1}}_1-\mu^{i_q}_1)(N_{\alpha_1, \alpha_2}A_{\alpha_1+\alpha_2}+N_{-\alpha_1, \alpha_2}A_{\alpha_1-\alpha_2}+N_{\alpha_1, -\beta_2}A_{\alpha_1-\beta_2}\\
&&+N_{-\alpha_1, -\beta_2}A_{\alpha_1+\beta_2}
+N_{-\beta_1, \alpha_2}A_{-\beta_1+\alpha_2}+N_{\beta_1, \alpha_2}A_{-\beta_1-\alpha_2}+N_{-\beta_1, -\beta_2}A_{-\beta_1-\beta_2}+N_{\beta_1, -\beta_2}A_{-\beta_1+\beta_2}),\\
\\
&&[X_2, A^{i_{q+1}}_{12} X_2]=[A_{\alpha_2}+A_{-\beta_2}, [\sqrt{-1}h_{\Lambda_k}, A_{\alpha_2}+A_{-\beta_2}]]=2\sqrt{-1}\alpha_2(h_{\Lambda_k})(h_{\alpha_2}-h_{-\beta_2})\\
&&-2\alpha_2(h_{\Lambda_k})N_{\alpha_2, -\beta_2}B_{\alpha_2-\beta_2},\\
\\
&&[X_1, A^{i_{q+1}}_{12} X_2]=[A_{\alpha_1}+A_{-\beta_1}, [\sqrt{-1}h_{\Lambda_l}, A_{\alpha_2}+A_{-\beta_2}]]=\alpha_2(h_{\Lambda_l})(N_{\alpha_1, \alpha_2}B_{\alpha_1+\alpha_2}+N_{\alpha_1, -\alpha_2}B_{\alpha_1-\alpha_2}\\
&&-N_{\alpha_1, -\beta_2}B_{\alpha_1-\beta_2}-N_{\alpha_1, \beta_2}B_{\alpha_1+\beta_2}
+N_{-\beta_1, \alpha_2}B_{-\beta_1+\alpha_2}+N_{-\beta_1, -\alpha_2}B_{-\beta_1-\alpha_2}-N_{-\beta_1, -\beta_2}B_{-\beta_1-\beta_2}\\
&&-N_{-\beta_1, \beta_2}B_{-\beta_1+\beta_2}).
 \end{eqnarray*}
  It follows that
\begin{equation*}
(\mu^{i_{q+1}}_1-\mu^{i_q}_1)[X_1, X_2]\cap
([X_2,  A^{i_{q+1}}_{12} X_2]+[X_1, A^{i_{q+1}}_{12} X_2])=\{0\}.
\end{equation*}

 Since $(-\beta_1\pm\alpha_2)\neq(\alpha_1\pm\alpha_2)\neq(-\beta_1\pm\beta_2)$ and  $(\alpha_1\pm\alpha_2)\neq(\alpha_1\pm\beta_2)$,  then we have that $(\mu^{i_{q+1}}_1-\mu^{i_q}_1)[X_1, X_2] \in \mathfrak{k}_1$ if and only if $\mu^{i_{q+1}}_1=\mu^{i_q}_1$.

 Since $(-\beta_1\pm\alpha_2)\neq(\alpha_1\pm\alpha_2)\neq(-\beta_1\pm\beta_2),$ and  $(\alpha_1\pm\alpha_2)\neq(\alpha_1\pm\beta_2)$,  it follows that
 \begin{eqnarray*}\label{eq1}
[X_2, A^{i_{q+1}}_{12} X_2]\in \mathfrak{k},\quad \quad \quad
[X_1, A^{i_{q+1}}_{12} X_2]\in \mathfrak{m}
 \end{eqnarray*}
 This implies that
 \begin{eqnarray*}\label{eq1}
 [X_2,  A^{i_{q+1}}_{12} X_2]\cap[X_1, A^{i_{q+1}}_{12} X_2]=0.
 \end{eqnarray*}
Therefore we have
 $
 [X_1, A^{i_{q+1}}_{12} X_2]\in \mathfrak{k}_1
 $
 if and only if
 \begin{eqnarray*}\label{eq1}
 &&[X_1, A^{i_{q+1}}_{12} X_2]=0.
 \end{eqnarray*}
  It is easy to check that
    \begin{eqnarray*}\label{eq1}
 &&[X_1, A^{i_{q+1}}_{12} X_2]\neq0,
 \end{eqnarray*}
   which is a contradiction. Hence we have $A^{i_{q+1}}_{jk}=0$.

 We prove $\mu^{i_{q+1}}_1=\mu^{i_q}_2, \mu^{i_{q+1}}_2=\mu^{i_q}_1, \mu^{i_{q+1}}_2=\mu^{i_q}_2$  by the same method as above.

Hence if $A^{i_{q}}_{jk}=0$, we prove that $A^{i_{q+1}}_{jk}=0$ and $\mu^{i_{q+1}}_1=\mu^{i_q}_1, \mu^{i_{q+1}}_1=\mu^{i_q}_2, \mu^{i_{q+1}}_2=\mu^{i_q}_1, \mu^{i_{q+1}}_2=\mu^{i_q}_2$.

If $A^{i_{q+1}}_{jk}=0$, by the same method as above we prove that $A^{i_{q}}_{jk}=0$ and $\mu^{i_{q+1}}_1=\mu^{i_q}_1, \mu^{i_{q+1}}_1=\mu^{i_q}_2, \mu^{i_{q+1}}_2=\mu^{i_q}_1, \mu^{i_{q+1}}_2=\mu^{i_q}_2$.

{\bf Case (c2).} We assume that  $A^{i_{q}}_{jk}\neq0$  and $A^{i_{q+1}}_{jk}\neq0$. We will prove that we get a contradiction.

Since $A^{i_{q}}_{jk}\neq0$  and $A^{i_{q+1}}_{jk}\neq0$,
we assume that $A^{i_{q}}_{jk}$ and  $A^{i_{q+1}}_{jk}$ are  the matrix representations of the following maps
 \begin{equation*}
 \ad(\sqrt{-1}h_{\Lambda_l}): \; \mathfrak{n}^{i_{q}}_j\rightarrow \mathfrak{n}^{i_{q}}_k, \quad j\neq k \in \{1, 2\}
 \end{equation*}
 and
 \begin{equation*}
 \ad(\sqrt{-1}h_{\Lambda_k}): \; \mathfrak{n}^{i_{q+1}}_j\rightarrow \mathfrak{n}^{i_{q+1}}_k, \quad j\neq k \in \{1, 2\}
 \end{equation*}
 respectively.

{\bf (i)} Assume that $\alpha_1=\beta_1, \alpha_2\neq\beta_2.$

 We choose $X_1=A_{\alpha_1}$ and $X_2=A_{\alpha_2}+A_{-\beta_2}$. Then we have
\begin{eqnarray*}\label{eq1}
&&(\mu^{i_{q+1}}_1-\mu^{i_q}_1)[X_1, X_2]=(\mu^{i_{q+1}}_1-\mu^{i_q}_1)(N_{\alpha_1, \alpha_2}A_{\alpha_1+\alpha_2}+N_{-\alpha_1, \alpha_2}A_{\alpha_1-\alpha_2}+N_{\alpha_1, -\beta_2}A_{\alpha_1-\beta_2}\\
&&+N_{-\alpha_1, -\beta_2}A_{\alpha_1+\beta_2}),\\
\\
&&[X_1, A^{i_{q}}_{12} X_1]=[A_{\alpha_1}, [\sqrt{-1}h_{\Lambda_l}, A_{\alpha_1}]]=2\sqrt{-1}\alpha_1(h_{\Lambda_1})h_{\alpha_1},\\
\\
&&[X_2, A^{i_{q+1}}_{12} X_2]=[A_{\alpha_2}+A_{-\beta_2}, [\sqrt{-1}h_{\Lambda_k}, A_{\alpha_2}+A_{-\beta_2}]]=2\sqrt{-1}\alpha_2(h_{\Lambda_k})(h_{\alpha_2}-h_{-\beta_2})\\
&&-2\alpha_2(h_{\Lambda_k})N_{\alpha_2, -\beta_2}B_{\alpha_2-\beta_2},\\
\\
&&[X_1, A^{i_{q+1}}_{12} X_2]=[A_{\alpha_1}, [\sqrt{-1}h_{\Lambda_l}, A_{\alpha_2}+A_{-\beta_2}]]=\alpha_2(h_{\Lambda_l})(N_{\alpha_1, \alpha_2}B_{\alpha_1+\alpha_2}+N_{\alpha_1, -\alpha_2}B_{\alpha_1-\alpha_2}\\
&&-N_{\alpha_1, -\beta_2}B_{\alpha_1-\beta_2}-N_{\alpha_1, \beta_2}B_{\alpha_1+\beta_2}),\\
\\
&&[X_2, A^{i_{q}}_{12} X_1]=[A_{\alpha_2}+A_{-\beta_2}, [\sqrt{-1}h_{\Lambda_k}, A_{\alpha_1}]]=
\alpha_1(h_{\Lambda_k})(N_{\alpha_2, \alpha_1}B_{\alpha_2+\alpha_1}+N_{\alpha_2, -\alpha_1}B_{\alpha_2-\alpha_1}
\\
&&
+N_{-\beta_2, \alpha_1}B_{-\beta_2+\alpha_1}+N_{-\beta_2, -\alpha_1}B_{-\beta_2-\alpha_1}).
 \end{eqnarray*}
  It follows that
\begin{equation*}
(\mu^{i_{q+1}}_1-\mu^{i_q}_1)[X_1, X_2]\cap
([X_1, A^{i_{q}}_{12} X_1]+[X_2,  A^{i_{q+1}}_{12} X_2]+[X_1, A^{i_{q+1}}_{12} X_2]+[X_2, A^{i_{q}}_{12} X_1])=\{0\}.
\end{equation*}

Since $\alpha_2\neq\beta_2 $, by the same method as in {\bf1)} we prove that  $(\alpha_1\pm\alpha_2)\neq(\alpha_1\pm\beta_2)$,   then we have that $(\mu^{i_{q+1}}_1-\mu^{i_q}_1)[X_1, X_2] \in \mathfrak{k}_1$ if and only if $\mu^{i_{q+1}}_1=\mu^{i_q}_1$.

 Since $[X_1, A^{i_{q}}_{12} X_1]+[X_2, A^{i_{q}}_{12} X_2])\in \mathfrak{k}$ and  $(\alpha_1\pm\alpha_2)\neq(\alpha_1\pm\beta_2)$,  it follows that
 \begin{eqnarray*}\label{eq1}
 &&([X_1, A^{i_{q}}_{12} X_1]+[X_2,  A^{i_{q+1}}_{12} X_2])\cap([X_1, A^{i_{q+1}}_{12} X_2]+[X_2, A^{i_{q}}_{12} X_1])=\{0\}.
 \end{eqnarray*}
 Then we have
 \begin{eqnarray*}\label{eq1}
 &&([X_1, A^{i_{q+1}}_{12} X_2]+[X_2, A^{i_{q}}_{12} X_1])\in \mathfrak{k}_1
 \end{eqnarray*}
 if and only if
 \begin{eqnarray*}\label{eq1}
 &&[X_1, A^{i_{q+1}}_{12} X_2]+[X_2, A^{i_{q}}_{12} X_1]=0.
 \end{eqnarray*}
  It is easy to check that
    \begin{eqnarray*}\label{eq1}
 &&([X_1, A^{i_{q+1}}_{12} X_2]+[X_2, A^{i_{q}}_{12} X_1])=0
 \end{eqnarray*}
 if and only if $\alpha_1(h_{\Lambda_k})=\alpha_2(h_{\Lambda_l})=0$. But $\alpha_1(h_{\Lambda_k})\neq0$ and $\alpha_2(h_{\Lambda_l})\neq0$,
   which is a contradiction. Hence we have that at least one of $A^{i_{q}}_{jk}, A^{i_{q+1}}_{jk}$ is equal zero.  By
{\bf Case c1} we have that $A^{i_{q}}_{jk}=A^{i_{q+1}}_{jk}=0$  and
 $\mu^{i_{q+1}}_1=\mu^{i_q}_1$. Similarly, we prove that $\mu^{i_{q+1}}_1=\mu^{i_q}_2, \mu^{i_{q+1}}_2=\mu^{i_q}_1, \mu^{i_{q+1}}_2=\mu^{i_q}_2$.
 \medskip

{\bf (ii)} Assume that  $\alpha_1\neq\beta_1, \alpha_2=\beta_2.$

 We choose $X_1=A_{\alpha_1}+A_{-\beta_1}$ and $X_2=A_{\alpha_2}$.  By the same method as above we get a contradiction. Hence we have that at least one of $A^{i_{q}}_{jk}, A^{i_{q+1}}_{jk}$ is equal zero.  By
{\bf Case c1} we have that $A^{i_{q}}_{jk}=A^{i_{q+1}}_{jk}=0$  and
 $\mu^{i_{q+1}}_1=\mu^{i_q}_1, \mu^{i_{q+1}}_1=\mu^{i_q}_2, \mu^{i_{q+1}}_2=\mu^{i_q}_1, \mu^{i_{q+1}}_2=\mu^{i_q}_2$.

 \medskip
{\bf (iii)} if $\alpha_1\neq\beta_1, \alpha_2\neq\beta_2.$

 We choose $X_1=A_{\alpha_1}+A_{-\beta_1}$ and $X_2=A_{\alpha_2}+A_{-\beta_2}$. Then we have
\begin{eqnarray*}\label{eq1}
&&(\mu^{i_{q+1}}_1-\mu^{i_q}_1)[X_1, X_2]=(\mu^{i_{q+1}}_1-\mu^{i_q}_1)(N_{\alpha_1, \alpha_2}A_{\alpha_1+\alpha_2}+N_{-\alpha_1, \alpha_2}A_{\alpha_1-\alpha_2}+N_{\alpha_1, -\beta_2}A_{\alpha_1-\beta_2}\\
&&+N_{-\alpha_1, -\beta_2}A_{\alpha_1+\beta_2}
+N_{-\beta_1, \alpha_2}A_{-\beta_1+\alpha_2}+N_{\beta_1, \alpha_2}A_{-\beta_1-\alpha_2}+N_{-\beta_1, -\beta_2}A_{-\beta_1-\beta_2}+N_{\beta_1, -\beta_2}A_{-\beta_1+\beta_2}),\\
\\
&&[X_1, A^{i_{q}}_{12} X_1]=[A_{\alpha_1}+A_{-\beta_1}, [\sqrt{-1}h_{\Lambda_l}, A_{\alpha_1}+A_{-\beta_1}]]=2\sqrt{-1}\alpha_1(h_{\Lambda_1})(h_{\alpha_1}-h_{-\beta_1})\\
&&-2\alpha_1(h_{\Lambda_l})N_{\alpha_1, -\beta_1}B_{\alpha_1-\beta_1},\\
\\
&&[X_2, A^{i_{q+1}}_{12} X_2]=[A_{\alpha_2}+A_{-\beta_2}, [\sqrt{-1}h_{\Lambda_k}, A_{\alpha_2}+A_{-\beta_2}]]=2\sqrt{-1}\alpha_2(h_{\Lambda_k})(h_{\alpha_2}-h_{-\beta_2})\\
&&-2\alpha_2(h_{\Lambda_k})N_{\alpha_2, -\beta_2}B_{\alpha_2-\beta_2},\\
\\
&&[X_1, A^{i_{q+1}}_{12} X_2]=[A_{\alpha_1}+A_{-\beta_1}, [\sqrt{-1}h_{\Lambda_l}, A_{\alpha_2}+A_{-\beta_2}]]=\alpha_2(h_{\Lambda_l})(N_{\alpha_1, \alpha_2}B_{\alpha_1+\alpha_2}+N_{\alpha_1, -\alpha_2}B_{\alpha_1-\alpha_2}\\
&&-N_{\alpha_1, -\beta_2}B_{\alpha_1-\beta_2}-N_{\alpha_1, \beta_2}B_{\alpha_1+\beta_2}
+N_{-\beta_1, \alpha_2}B_{-\beta_1+\alpha_2}+N_{-\beta_1, -\alpha_2}B_{-\beta_1-\alpha_2}-N_{-\beta_1, -\beta_2}B_{-\beta_1-\beta_2}\\
&&-N_{-\beta_1, \beta_2}B_{-\beta_1+\beta_2}),\\
\\
&&[X_2, A^{i_{q}}_{12} X_1]=[A_{\alpha_2}+A_{-\beta_2}, [\sqrt{-1}h_{\Lambda_k}, A_{\alpha_1}+A_{-\beta_1}]]=
\alpha_1(h_{\Lambda_k})(N_{\alpha_2, \alpha_1}B_{\alpha_2+\alpha_1}+N_{\alpha_2, -\alpha_1}B_{\alpha_2-\alpha_1}
\\
&&-N_{\alpha_2, -\beta_1}B_{\alpha_2-\beta_1}-N_{\alpha_2, \beta_1}B_{\alpha_2+\beta_1}
+N_{-\beta_2, \alpha_1}B_{-\beta_2+\alpha_1}+N_{-\beta_2, -\alpha_1}B_{-\beta_2-\alpha_1}-N_{-\beta_2, -\beta_1}B_{-\beta_2-\beta_1}\\
&&-N_{-\beta_2, \beta_1}B_{-\beta_2+\beta_1}).
 \end{eqnarray*}
  It follows that
\begin{equation*}
(\mu^{i_{q+1}}_1-\mu^{i_q}_1)[X_1, X_2]\cap
([X_1, A^{i_{q}}_{12} X_1]+[X_2,  A^{i_{q+1}}_{12} X_2]+[X_1, A^{i_{q+1}}_{12} X_2]+[X_2, A^{i_{q}}_{12} X_1])=\{0\}.
\end{equation*}

 Since $\alpha_1\neq\beta_1$ and $\alpha_2\neq \beta_2$, it follows that  $(-\beta_1\pm\alpha_2)\neq(\alpha_1\pm\alpha_2)\neq(-\beta_1\pm\beta_2)$ and $(\alpha_1\pm\alpha_2)\neq(\alpha_1\pm\beta_2)$,  then we have that $(\mu^{i_{q+1}}_1-\mu^{i_q}_1)[X_1, X_2] \in \mathfrak{k}_1$ if and only if $\mu^{i_{q+1}}_1=\mu^{i_q}_1$.

 Since $[X_1, A^{i_{q}}_{12} X_1]+[X_2, A^{i_{q}}_{12} X_2])\in \mathfrak{k}$ and  $(-\beta_1\pm\alpha_2)\neq(\alpha_1\pm\alpha_2)\neq(-\beta_1\pm\beta_2), $ $(\alpha_1\pm\alpha_2)\neq(\alpha_1\pm\beta_2)$,  it follows that
 \begin{eqnarray*}\label{eq1}
 &&([X_1, A^{i_{q}}_{12} X_1]+[X_2,  A^{i_{q+1}}_{12} X_2])\cap([X_1, A^{i_{q+1}}_{12} X_2]+[X_2, A^{i_{q}}_{12} X_1])=\{0\}.
 \end{eqnarray*}
 This implies that
 \begin{eqnarray*}\label{eq1}
 &&([X_1, A^{i_{q+1}}_{12} X_2]+[X_2, A^{i_{q}}_{12} X_1])\in \mathfrak{k}_1
 \end{eqnarray*}
 if and only if
 \begin{eqnarray*}\label{eq1}
 &&[X_1, A^{i_{q+1}}_{12} X_2]+[X_2, A^{i_{q}}_{12} X_1]=0.
 \end{eqnarray*}
  It is easy to check that
    \begin{eqnarray*}\label{eq1}
 &&([X_1, A^{i_{q+1}}_{12} X_2]+[X_2, A^{i_{q}}_{12} X_1])=0
 \end{eqnarray*}
 if and only if $\alpha_1(h_{\Lambda_k})=\alpha_2(h_{\Lambda_l})=0$. But $\alpha_1(h_{\Lambda_k})\neq0$ and $\alpha_2(h_{\Lambda_l})\neq0$,
   which is a contradiction. Hence we have that at least one of $A^{i_{q}}_{jk}, A^{i_{q+1}}_{jk}$ is equal zero.  By
{\bf Case c1} we have that $A^{i_{q}}_{jk}=A^{i_{q+1}}_{jk}=0$  and
 $\mu^{i_{q+1}}_1=\mu^{i_q}_1, \mu^{i_{q+1}}_1=\mu^{i_q}_2, \mu^{i_{q+1}}_2=\mu^{i_q}_1, \mu^{i_{q+1}}_2=\mu^{i_q}_2$.
 \medskip

 Therefore,  if   $G$-invariant metrics on $G/K_1$ defined  by (\ref{equ7}) are g.o. metrics, then we get that $A|_{\mathfrak{m}}$ is  a diagonal matrix and all diagonal elements are equal.
 \hfill $\Box$

\medskip
\medskip
\noindent
{\it Proof of  Corollary \ref{CC1}}.

If $\dim \mathfrak{s}=1$ we assume that $\Pi_M=\{\alpha_k\}$ and it follows that $\mathfrak{s}=\mathbb{R}\sqrt{-1}h_{\Lambda_k}$. Since there exists $j\in \{1, \dots, s\}$ such that $\mathfrak{m}_j$ is reducible as an $\Ad(K_1)$-module, we set $\mathfrak{m}_j=\mathfrak{n}^j_1+\mathfrak{n}^j_2$, where $\mathfrak{n}^j_1$ and $\mathfrak{n}^j_2$ are equivalent and irreducible as  $\Ad(K_1)$-modules. Therefore,  $G$-invariant metrics on $G/K_1$ are defined  by (\ref{equ7}). Then
Theorem \ref{T1} implies that a g.o. metric $g$ for a $M$-space $G/K_1$ is
\begin{equation}
\langle \cdot,\cdot \rangle=\mu\mathrm{Id}|_{\mathbb{R}h_{\Lambda_{k}}}+ \lambda B(\cdot,\cdot)|_{{\mathfrak{m}}}.
\end{equation}

Let  $V\in \mathfrak{s}$ and $X\in \mathfrak{m}$ be eigenvectors of the associate  operator $\Lambda$  with different eigenvalues $\mu, \lambda$, Proposition \ref{P3} implies that there exists
  $k\in \mathfrak{k}_1$  such that $[V, X]=\frac{\mu}{\mu-\lambda}[k, V]+\frac{\lambda}{\mu-\lambda}[k, X]$. Since  $\mathfrak{m}_j=\mathfrak{n}^j_1\oplus \mathfrak{n}^j_{2}$,  by Remark \ref{L1} we have  that  $\mathfrak{n}^j_2=[\sqrt{-1}h_{\Lambda_k}, \mathfrak{n}^j_1]$ and $\mathfrak{n}^j_1=[\sqrt{-1}h_{\Lambda_k}, \mathfrak{n}^j_2]$. We choose $V=\sqrt{-1}h_{\Lambda_k}$ and $X\in \mathfrak{n}^j_1$ and  assume that  $\mu\neq \lambda$. Then Proposition \ref{P3} implies that
  $$[\sqrt{-1}h_{\Lambda_k}, X]=\frac{\mu}{\mu-\lambda}[k, \sqrt{-1}h_{\Lambda_k}]+\frac{\lambda}{\mu-\lambda}[k, X].
  $$
   Since  $[k, \sqrt{-1}h_{\Lambda_k}]=0$, it follows that $[\sqrt{-1}h_{\Lambda_k}, X]=\frac{\lambda}{\mu-\lambda}[k, X]$. But  $[\sqrt{-1}h_{\Lambda_k}, X]\in \mathfrak{n}^j_2$ and $[k, X] \in \mathfrak{n}^j_1$.  This is a contradiction. Hence we get $\lambda=\mu$, and the conclusion follows.
\hfill $\Box$

\section{Proof of Theorem 2 and Corollary 2}

\noindent
 {\it Proof of part 1) of Theorem \ref{T2}}.

   Let  $\mathfrak{n}$  be  the tangent space $T_o(G/K_1)$ at $o=eK_1$.  Since  $\mathfrak{m}_1$ and $\mathfrak{m}_2$ are irreducible as $\Ad(K_1)$-modules,
 then we have that
  \begin{equation}\label{28}
 \mathfrak{n}=\mathfrak{s}\oplus \mathfrak{m}_1\oplus\mathfrak{m}_2
 \end{equation}
  is an $\Ad(K_1)$-irreducible decomposition.

 Let  $\langle \cdot,\cdot \rangle=B(\Lambda\cdot,\cdot)$ be an $\Ad({K}_1)$-invariant scalar product on $\mathfrak{n}$, where  $\Lambda$ is the associated operator. Then we have
 \begin{equation}\label{29}
\langle \cdot,\cdot \rangle=\mu\mathrm{Id}|_{\mathfrak{s}}+ \mu_1B(\cdot,\cdot)|_{{\mathfrak{m}}_1}+\mu_2 B(\cdot,\cdot)|_{{\mathfrak{m}}_2}, \ (\mu,  \mu_1, \mu_2 >0).
 \end{equation}

 By  Corollary \ref{C3}  $(G/K_1, g)$ is a g.o. space if and only if for every $V \in \mathfrak{s}$ and  $X\in \mathfrak{m}$ there exists $k\in \mathfrak{k}_1$ such that
 \begin{equation}\label{30}
[k+V+X, \Lambda(V+X)]\in \mathfrak{k}_1.
 \end{equation}
Let  $X=X_1+X_2, X_i\in \mathfrak{m}_i \ (i=1,2)$.
Then  (\ref{30}) is equivalent to
 \begin{equation}\label{31}
[k+V+X_1+X_2, \mu V+ \mu_1X_1+\mu_2X_2]\in \mathfrak{k}_1.
 \end{equation}

 Since $[k+V, V]=0$, (\ref{31}) reduces to
 \begin{equation}\label{32}
([k+V,  \mu_1X_1] + [k+V,  \mu_2X_2]+[X_1, \mu V]+[X_2, \mu V] +(\mu_1-\mu_2)[X_2, X_1])\in \mathfrak{k}_1.
 \end{equation}

 Since $[\mathfrak{m}_2, \mathfrak{m}_1]\subseteq \mathfrak{m}_1$, it follows that $([k+V,  \mu_1X_1] +[X_1, \mu V]+(\mu_1-\mu_2)[X_2, X_1]) \in \mathfrak{m}_1$ and $( [k+V,  \mu_2X_2]+[X_2, \mu V] )\in \mathfrak{m}_2$,   (\ref{32}) is equivalent to
  \begin{equation}\label{33}
[\mu_1 k+(\mu_1-\mu)V+(\mu_1-\mu_2)X_2, X_1] =0
 \end{equation}
 and
 \begin{equation}\label{34}
([\mu_2 k+(\mu_2-\mu)V, X_2])=0,
 \end{equation}
and this completes the proof.\hfill $\Box$
 \medskip
 \medskip

\noindent
{\it  Proof of part 2) of Theorem \ref{T2}}.

 We assume that $[\mathfrak{m}_1, \mathfrak{m}_1]\subseteq \mathfrak{k}\oplus \mathfrak{m}_2$ in the decomposition (\ref{2}), so it follows that  $\dim \mathfrak{m}_1 \neq 2$.

\smallskip
{\bf Case 1.} Assume that $\mathfrak{m}_1$ is reducible as $\Ad(K_1)$-module and $\mathfrak{m}_2$ is irreducible as $\Ad(K_1)$-module.

 Lemma  \ref{L2}  implies that
 $\mathfrak{m}_1=\mathfrak{n}^1_1\oplus \mathfrak{n}^1_2$, where $\mathfrak{n}^1_1$ and $\mathfrak{n}^1_2$ are equivalent and irreducible as $\Ad(K_1)$-modules. Then we have that  $\mathfrak{n} \cong T_o(G/K_1)=\mathfrak{s}\oplus \mathfrak{n}^1_1\oplus \mathfrak{n}^1_2 \oplus \mathfrak{m}_2$ is the   $\Ad(K_1)$-irreducible  decomposition.

  Then $G$-invariant metrics on $G/K_1$ which are $\mathrm{Ad}({K}_1)$-invariant are defined by

 \begin{equation}\label{equ7f}
A=\left (
\begin{array}{cccc}
A|_{\mathfrak{s}}& 0 &  0 \\
0 & A|_{\mathfrak{m}_1} & 0  \\
 0 & 0 &  A|_{\mathfrak{m}_2}
\end{array}
\right )
\end{equation}

Since $\mathfrak{s}$ and $\mathfrak{m}_2$ are irreducible as $\Ad(K_1)$-modules, it follows that $\Lambda|_{\mathfrak{s}}=\mu\mathrm{Id}|_{\mathfrak{s}}$ and $\Lambda|_{\mathfrak{m}_2}=\lambda_2\mathrm{Id}|_{\mathfrak{m}_2}$, where $\mu, \lambda_2> 0$. $\Lambda|_{\mathfrak{m}_{1}}$ has  the form
   \begin{equation}\label{equ811}
A|_{\mathfrak{m}_{1}}=\left (
\begin{array}{cc}
\lambda^{1}_1\mathrm{Id} |_{\mathfrak{n}^{1}_1}& A_{21}  \\
A_{12} & \lambda^{1}_2\mathrm{Id}|_{\mathfrak{n}^{1}_2}
\end{array}
\right ),  \  \lambda^{1}_1, \ \lambda^{1}_2> 0,
\end{equation}
where $A_{jk}:\mathfrak{n}^{1}_j\rightarrow \mathfrak{n}^{1}_k, \; j\neq k \in \{1, 2\}$ is  $\Ad(K_1)$-equivalent map, and $\Lambda X=\Lambda|_{\mathfrak{m}_{1}}X=\lambda^{1}_j X+A_{jk} X$ for any $X \in {n}^{1}_j \subset\mathfrak{m}_{1}, j \in \{1, 2\}$.

 We need to show that $A_{jk}=0$, $j\neq k \in \{1, 2\}$,  so assume
 the contrary to get a contradiction.
 Let $\Pi_{M}=\{ \alpha_p\},  p\in \{1, \dots, l\}$.
 Assume that $A_{jk}\neq 0$ is the matrix representation of the following map
 \begin{equation}\label{ltz1}
 \ad(\sqrt{-1}h_{\Lambda_p}):\mathfrak{n}^{1}_j\rightarrow \mathfrak{n}^{1}_k, \; j\neq k \in \{1, 2\}.
 \end{equation}

 Set  $R^+_{\mathfrak{t}}=\{\xi_1, \xi_2\}$. Since $\dim \mathfrak{m}_1\neq 2$,  this implies   there exist $\alpha_1, \alpha_2\in R^+_1$  such that $\alpha_1+\alpha_2 \in R^+_2$, where $R^+_i, (i=1, 2)$ are defined by (\ref{w2}).  Lemma \ref{Le3} and Proposition \ref{PP4} implies  that there exist $\beta_1, \beta_2 \in R^+_1$  such that $\alpha_1\mid_{\mathfrak{a}_1}=-\beta_1\mid_{\mathfrak{a}_1}, \; \alpha_2\mid_{\mathfrak{a}_1}=-\beta_2\mid_{\mathfrak{a}_1}$  and $\alpha_1(h)=\beta_1(h), \alpha_2(h)=\beta_2(h)$ for any $h\in \mathfrak{s}$. Since $\dim \mathfrak{m}_1\neq2$,  Proposition \ref{PP3} and Remark \ref{R3} implies that  $\alpha_1\neq\beta_1$ and $\alpha_2\neq\beta_2$.

   Since $G/K_1$ is a g.o.space, by Corollary \ref{C3} it follows that for any $X\in \mathfrak{n}$  there exists a $k \in \mathfrak{k}_1$  such that $[k+X, \Lambda X] \in \mathfrak{k}_1$.
 We choose non zero vectors $X_1, X_2 \in  \mathfrak{n}^{1}_1$,  with  $[k+X_1+X_2, \Lambda(X_1+X_2)] \in \mathfrak{k}_1$. Then we have
\begin{eqnarray*}
&&[k+X_1+X_2, \lambda^{1}_1X_1+ A_{12}X_1+\lambda^{1}_1X_2+ A_{21}X_2]=[k, \lambda^{1}_1X_1+\lambda^{1}_1X_2]
+\\
&&[k, A_{21}X_2+A_{12}X_1]
+[X_1, A_{12}X_1]
 +[X_1, A_{21}X_2]+[X_2, A_{12}X_1]+[X_2, A_{21}X_2].
\end{eqnarray*}

 Since $\frak{n}^{1}_1$ and $\frak{n}^{1}_2$ are $\Ad(K_1)$-invariant, it follows that $[k, \lambda^{1}_1X_1+\lambda^{1}_1X_2]\subseteq \mathfrak{n}^{1}_1\subset \mathfrak{m}_1$ and  $[k, A_{21}X_2+A_{12}X_1]\subseteq \mathfrak{n}^{1}_2\subset \mathfrak{m}_1$.
 We choose $X_1=A_{\alpha_1}+A_{-\beta_1}, X_2=A_{\alpha_2}+A_{-\beta_2}$.
 Then we have that
 \begin{eqnarray*}
 &&[X_1, A_{12}X_1]=[A_{\alpha_1}+A_{-\beta_1}, [\sqrt{-1}h_{\Lambda_p}, A_{\alpha_1}+A_{-\beta_1}]=2\sqrt{-1}\alpha_1(h_{\Lambda_p})(h_{\alpha_1}-h_{-\beta_1})\\
 &&-2\alpha_1(h_{\Lambda_p})N_{\alpha_1, -\beta_1}B_{\alpha_1-\beta_1},\\
 \\
 &&[X_2, A_{21}X_2]=[A_{\alpha_2}+A_{-\beta_2}, [\sqrt{-1}h_{\Lambda_p}, A_{\alpha_2}+A_{-\beta_2}]=2\sqrt{-1}\alpha_2(h_{\Lambda_p})(h_{\alpha_2}-h_{-\beta_2})\\
 &&-2\alpha_2(h_{\Lambda_p})N_{\alpha_2, -\beta_2}B_{\alpha_2-\beta_2},\\
 \\
 &&[X_1, A_{21} X_2]=[A_{\alpha_1}+A_{-\beta_1}, [\sqrt{-1}h_{\Lambda_p}, A_{\alpha_2}+A_{-\beta_2}]]=\alpha_2(h_{\Lambda_p})(N_{\alpha_1, \alpha_2}B_{\alpha_1+\alpha_2}+N_{\alpha_1, -\alpha_2}B_{\alpha_1-\alpha_2}\\
&&-N_{\alpha_1, -\beta_2}B_{\alpha_1-\beta_2}-N_{\alpha_1, \beta_2}B_{\alpha_1+\beta_2}
+N_{-\beta_1, \alpha_2}B_{-\beta_1+\alpha_2}+N_{-\beta_1, -\alpha_2}B_{-\beta_1-\alpha_2}-N_{-\beta_1, -\beta_2}B_{-\beta_1-\beta_2}\\
&&-N_{-\beta_1, \beta_2}B_{-\beta_1+\beta_2}),\\
\\
&&[X_2, A_{12} X_1]=[A_{\alpha_2}+A_{-\beta_2}, [\sqrt{-1}h_{\Lambda_p}, A_{\alpha_1}+A_{-\beta_1}]]=
\alpha_1(h_{\Lambda_p})(N_{\alpha_2, \alpha_1}B_{\alpha_2+\alpha_1}+N_{\alpha_2, -\alpha_1}B_{\alpha_2-\alpha_1}
\\
&&-N_{\alpha_2, -\beta_1}B_{\alpha_2-\beta_1}-N_{\alpha_2, \beta_1}B_{\alpha_2+\beta_1}
+N_{-\beta_2, \alpha_1}B_{-\beta_2+\alpha_1}+N_{-\beta_2, -\alpha_1}B_{-\beta_2-\alpha_1}-N_{-\beta_2, -\beta_1}B_{-\beta_2-\beta_1}\\
&&-N_{-\beta_2, \beta_1}B_{-\beta_2+\beta_1}).
 \end{eqnarray*}

 Since $([X_1, A_{12}X_1]+[X_2, A_{21}X_2])\in \mathfrak{k}$, $([X_1, A_{21} X_2]+[X_2, A_{12} X_1])\in \mathfrak{m}_2\oplus \mathfrak{k}$ and $([k, \lambda^{1}_1X_1+\lambda^{1}_1X_2]+[k, A_{21}X_2+A_{12}X_1])\subset \mathfrak{m}_1$, it follows that
   \begin{equation*}
  ([k, \lambda^{1}_1X_1+\lambda^{1}_1X_2]+[k, A_{21}X_2+A_{12}X_1])
  \cap ([X_1, A_{12}X_1]+[X_2, A_{21}X_2]+[X_1, A_{21} X_2]+[X_2, A_{12} X_1])=\{0\}.
  \end{equation*}

   Since $\alpha_1\neq\beta_1$ and $\alpha_2\neq\beta_2$ we have $(-\beta_1\pm\alpha_2)\neq(\alpha_1\pm\alpha_2)\neq(-\beta_1\pm\beta_2)$  and $(\alpha_1\pm\alpha_2)\neq(\alpha_1\pm\beta_2)$, then we have $([X_1, A_{21} X_2]+[X_2, A_{12} X_1])\in  \mathfrak{k}_1$ if and only if $\alpha_1(h_{\Lambda_p})=\alpha_2(h_{\Lambda_p})=0$. But $\alpha_1(h_{\Lambda_p})\neq 0$ and $\alpha_2(h_{\Lambda_p})\neq0$, which is a contradiction. Therefore we have $A_{jk}=0, (j, k \in \{1, 2\})$.

   Now we prove that $\lambda^1_1=\lambda^2_1$ in (\ref{equ811}).

   Assume that $\lambda^1_1\neq\lambda^2_1$ in (\ref{equ811}). Since $G/K_1$ is a g.o.space, by Proposition \ref{P3} we have that for any $X_1 \in \mathfrak{n}^1_1,  X_2 \in \mathfrak{n}^1_2$ there exists a $k\in \mathfrak{k}_1$ such that
   $$
   [X_1, X_2]=\frac{\lambda^1_1}{\lambda^1_1-\lambda^1_2}[k, X_1]+\frac{\lambda^1_2}{\lambda^1_1-\lambda^1_2}[k, X_2].
   $$

     Lemma \ref{Le3} and Proposition \ref{PP4} implies  that there exist $\alpha_1, \beta_1 \in R^+_1$  such that $\alpha_1\mid_{\mathfrak{a}_1}=-\beta_1\mid_{\mathfrak{a}_1} $ and $\alpha_1(h)=\beta_1(h)$ for any $h\in \mathfrak{s}$. We choose $X_1=A_{\alpha_1}+A_{-\beta_1}, X_2=B_{\alpha_1}-B_{-\beta_1}$, then we have
     $[X_1, X_2]=2\sqrt{-1}(h_{\alpha_1}-h_{-\beta_1})-2N_{\alpha_1, -\beta_1}B_{\alpha_1-\beta_1}$, it follows that $[X_1, X_2]\neq0$ and $[X_1, X_2]\in \mathfrak{k}$. Since $\mathfrak{n}^1_1$ and $\mathfrak{n}^1_2$ are irreducible as $\Ad(K_1)$-modules, it follows that $(\frac{\lambda^1_1}{\lambda^1_1-\lambda^1_2}[k, X_1]+\frac{\lambda^1_2}{\lambda^1_1-\lambda^1_2}[k, X_2])\in \mathfrak{m}_1$, this is a contradiction. Therefore we have $\lambda^1_1=\lambda^1_2$.

  Therefore    $\Ad({K}_1)$-invariant scalar product on $\mathfrak{n}$ are reduced to
 \begin{equation}\label{e1}
\langle \cdot,\cdot \rangle=\mu\mathrm{Id}|_{\mathfrak{s}}+ \mu_1B(\cdot,\cdot)|_{{\mathfrak{m}}_1}+\mu_2 B(\cdot,\cdot)|_{{\mathfrak{m}}_2}, \ (\mu,  \mu_1, \mu_2 >0).
 \end{equation}

 Since $(G/K_1, g)$ is a g.o. space, assume that $\mu_1\neq \mu$, Proposition \ref{P3} implies that   for any $V\in \mathfrak{s}$ and $X\in \mathfrak{n}^1_1$ there exists
  $k\in \mathfrak{k}_1$ such that $[V, X]=\frac{\mu}{\mu-\mu_1}[k, V]+\frac{\mu_1}{\mu-\mu_1}[k, X]$.   Since   $[k, V]=0$, it follows that   $[V, X]=\frac{\mu_1}{\mu-\mu_1}[k, X]$. Since $[V, X]\in \mathfrak{n}^1_2$ and $[k, X]\in \mathfrak{n}^1_1$, this is a contradiction.
Hence we get that  $\mu_1=\mu$.

  Hence,  if $(G/K_1, g)$ is a g.o. space,  then the corresponding $\Ad({K}_1)$-invariant scalar product (\ref{equ7f})  is reduced to
  \begin{equation}\label{36}
\langle \cdot,\cdot \rangle=\mu B(\cdot,\cdot)|_{\mathfrak{s}\oplus \mathfrak{m}_1}+ \mu_2B(\cdot,\cdot)|_{{\mathfrak{m}}_2}.
 \end{equation}

\smallskip
{\bf Case 2.} Assume that $\mathfrak{m}_2$ is reducible as $\Ad(K_1)$-module and $\mathfrak{m}_1$ is irreducible as $\Ad(K_1)$-module.

By Lemma  \ref{L2}  we obtain that
 $\mathfrak{m}_2=\mathfrak{n}^2_1\oplus \mathfrak{n}^2_2$, where $\mathfrak{n}^2_1$ and $\mathfrak{n}^2_2$ are equivalent and irreducible as $\Ad(K_1)$-modules. It follows that   $\mathfrak{n} \cong T_o(G/K_1)=\mathfrak{s}\oplus \mathfrak{m}_1\oplus \mathfrak{n}^2_1\oplus \mathfrak{n}^2_2$ is the   $\Ad(K_1)$-irreducible  decomposition.

  Then $G$-invariant metrics on $G/K_1$ which are $\mathrm{Ad}({K}_1)$-invariant are defined by

 \begin{equation}\label{equ7e}
A=\left (
\begin{array}{cccc}
A|_{\mathfrak{s}}& 0 &  0 \\
0 & A|_{\mathfrak{m}_1} & 0  \\
 0 & 0 &  A|_{\mathfrak{m}_2}
\end{array}
\right )
\end{equation}

Since $\mathfrak{s}$ and $\mathfrak{m}_1$ are irreducible as $\Ad(K_1)$-modules, it follows that $\Lambda|_{\mathfrak{s}}=\mu\mathrm{Id}|_{\mathfrak{s}}$ and $\Lambda|_{\mathfrak{m}_1}=\lambda_1\mathrm{Id}|_{\mathfrak{m}_1}$, where $\mu, \lambda_1> 0$. $\Lambda|_{\mathfrak{m}_{2}}$ has  the form
   \begin{equation}\label{equ81}
A|_{\mathfrak{m}_{2}}=\left (
\begin{array}{cc}
\lambda^{2}_1\mathrm{Id} |_{\mathfrak{n}^{2}_1}& A_{21}  \\
A_{12} & \lambda^{2}_2\mathrm{Id}|_{\mathfrak{n}^{2}_2}
\end{array}
\right ),  \lambda^{2}_1, \lambda^{2}_2> 0,
\end{equation}
where $A_{jk}:\mathfrak{n}^{2}_j\rightarrow \mathfrak{n}^{2}_k, \; j\neq k \in \{1, 2\}$ is  $\Ad(K_1)$-equivalent map, and $\Lambda X=\Lambda|_{\mathfrak{m}_{2}}X=\lambda^{2}_j X+A_{jk} X$ for any $X \in \mathfrak{n}^{2}_j \subset\mathfrak{m}_{2}, j \in \{1, 2\}$.

 Assume on the contrary that $A_{jk}\neq0, \; j\neq k \in \{1, 2\}$ and  $\Pi_{M}=\{ \alpha_p\}$,
 $p\in \{1, \dots, l\}$.  Let  $A_{jk}$ be the matrix representation of the  map
 $$\ad(\sqrt{-1}h_{\Lambda_p}):\mathfrak{n}^{2}_j\rightarrow \mathfrak{n}^{2}_k, \; j\neq k \in \{1, 2\}.$$

We consider two cases:

\medskip
\noindent

 {\bf (a) } Let $\dim \mathfrak{m}_2=2$.

 By Remark \ref{R2} we obtain that there exists $\alpha \in R^+_M$ such that $\mathfrak{n}^2_1=\mathbb{R}A_{\alpha}$ and  $\mathfrak{n}^2_2=\mathbb{R}B_{\alpha}$. Since $\dim \mathfrak{m}_2=2$ it follows that $\beta\pm \alpha \notin R_M$ for any $\beta \in R_K$.

Since $G/K_1$ is a g.o.space, by Corollary \ref{C3} it follows that for any $X\in \mathfrak{n}$  there exists a $k \in \mathfrak{k}_1$  such that $[k+X, \Lambda X] \in \mathfrak{k}_1$.
 We choose nonzero vectors $X=Y+X_1$,  where  $Y=\sqrt{-1}h_{\Lambda_p}\in  \mathfrak{s}$ and  $X_1=A_{\alpha} \in \mathfrak{n}^{2}_1$.
   It is  $[k+Y+X_1, \Lambda(X_1+Y)] \in \mathfrak{k}_1$.
   Then we have that
 \begin{eqnarray*}
 &&[k+X_1+Y, \lambda^{1}_1X_1+ A_{12}X_1+\mu Y]=[k, \lambda^{1}_1X_1+\ad(\sqrt{-1}h_{\Lambda_p})(X_1)]\\
 &&\ \ +(\lambda^{2}_1-\mu)[X_1, Y]+[X_1, \ad(\sqrt{-1}h_{\Lambda_p})(X_1)]+[Y, \ad(\sqrt{-1}h_{\Lambda_p})(X_1)].
  \end{eqnarray*}
 It follows that
 $$
 [k, \lambda^{1}_1X_1+\ad(\sqrt{-1}h_{\Lambda_p})(X_1)]+(\lambda^{2}_1-\mu)[X_1, Y]+[Y, \ad(\sqrt{-1}h_{\Lambda_p})(X_1)]\subset \mathfrak{m}_2.
 $$
Since $\mathfrak{s}=\{h\in \mathfrak{a}\mid B(h, \Pi_K)=0\}$ and  $\beta\pm \alpha \notin R_M$ for any $\beta \in R_K$,  we have
that  $h_{\alpha} \in \mathfrak{s}$.
Since $\alpha({h_{\Lambda_p}})\neq 0$ and $[X_1, \ad(\sqrt{-1}h_{\Lambda_p})(X_1)]=[A_{\alpha}, \alpha({h_{\Lambda_p}})B_{\alpha}]=2\sqrt{-1}\alpha({h_{\Lambda_p}})h_{\alpha}$, it follows that $[X_1, \ad(\sqrt{-1}h_{\Lambda_p})(X_1)] \in \mathfrak{s}$, which implies that

 \begin{eqnarray*}
([k, \lambda^{1}_1X_1+A_{12}X_1]+(\lambda^{2}_1-\mu)[X_1, Y]
 +[Y, A_{12}X_1])\cap [X_1, A_{12}X_1]= \{0\},
 \end{eqnarray*}
 hence we have $[X_1, A_{12}X_1]=0$.
 But
 $$
 [X_1, A_{12}X_1]=2\sqrt{-1}\alpha({h_{\Lambda_p}})h_{\alpha}\neq 0,
 $$
 which is a contradiction, so we  have $A_{jk}=0$, $(j\neq k \in \{1, 2\})$. Thus $A|_{\mathfrak{m}_2}$ is a diagonal matrix.

 Therefore  $\Ad({K}_1)$-invariant g.o. metrics on $\mathfrak{n}$ are  reduced to
 \begin{equation}\label{e1}
\langle \cdot,\cdot \rangle=\mu\mathrm{Id}|_{\mathfrak{s}}+ \lambda_1B(\cdot,\cdot)|_{{\mathfrak{m}}_1}+\lambda^2_1 B(\cdot,\cdot)|_{{\mathfrak{n}}^2_1}+\lambda^2_2 B(\cdot,\cdot)|_{{\mathfrak{n}}^2_2}, \ (\mu,  \lambda_1, \lambda^2_1,  \lambda^2_1>0).
 \end{equation}

 Since $(G/K_1, g)$ is a g.o. space, we assume that $\lambda^2_1\neq \mu$. Proposition \ref{P3} implies that   for any $V\in \mathfrak{s}$ and $X\in \mathfrak{n}^2_1$ there exists
  $k\in \mathfrak{k}_1$ such that $[V, X]=\frac{\mu}{\mu-\lambda^2_1}[k, V]+\frac{\lambda^2_1}{\mu-\lambda^2_1}[k, X]$.   Since   $[k, V]=0$, it follows that   $[V, X]=\frac{\lambda^2_1}{\mu-\lambda^2_1}[k, X]$. Since $[V, X]\in \mathfrak{n}^2_2$ and $[k, X]\in \mathfrak{n}^2_1$, this is a contradiction.
Hence we get that  $\lambda^2_1=\mu$. By the same method we obtain that $\lambda^2_2=\mu$.

Hence,  if $(G/K_1, g)$ is a g.o. space,  then the corresponding $\Ad({K}_1)$-invariant scalar product (\ref{equ7f})  is reduced to
  \begin{equation}\label{36}
\langle \cdot,\cdot \rangle=\mu B(\cdot,\cdot)|_{\mathfrak{s}\oplus \mathfrak{m}_2}+ \mu_1B(\cdot,\cdot)|_{{\mathfrak{m}}_1}.
 \end{equation}
 \medskip
 {\bf (b)} Assume that $\dim \mathfrak{m}_2\neq2$.

 Since $\dim \mathfrak{m}_2\neq 2$,  this implies   there exist $\alpha_1, \alpha_2\in R^+_2$  such that $\alpha_1-\alpha_2 \in R^+_1$.  Lemma \ref{Le3} and Proposition \ref{PP4} implies  that there exist $\beta_1, \beta_2 \in R^+_2$  such that $\alpha_1\mid_{\mathfrak{a}_1}=-\beta_1\mid_{\mathfrak{a}_1}, \; \alpha_2\mid_{\mathfrak{a}_1}=-\beta_2\mid_{\mathfrak{a}_1}$  and $\alpha_1(h)=\beta_1(h), \alpha_2(h)=\beta_2(h)$ for any $h\in \mathfrak{s}$. Proposition \ref{PP3} and Remark \ref{R3} implies that $\alpha_1\neq\beta_1$ and $\alpha_2\neq\beta_2$.

   Since $G/K_1$ is a g.o.space, by Corollary \ref{C3} it follows that for any $X\in \mathfrak{n}$  there exists a $k \in \mathfrak{k}_1$  such that $[k+X, \Lambda X] \in \mathfrak{k}_1$.
 We choose non zero vectors $X_1, X_2 \in  \mathfrak{n}^{2}_1$,  with  $[k+X_1+X_2, \Lambda(X_1+X_2)] \in \mathfrak{k}_1$. Then we have

\begin{eqnarray*}
&&[k+X_1+X_2, \lambda^{2}_1X_1+ A_{12}X_1+\lambda^{2}_1X_2+ A_{12}X_2]=[k, \lambda^{2}_1X_1+\lambda^{2}_1X_2]
+\\
&&[k, A_{12}X_2+A_{12}X_1]
+[X_1, A_{12}X_1]
 +[X_1, A_{12}X_2]+[X_2, A_{12}X_1]+[X_2, A_{12}X_2].
\end{eqnarray*}

Since $\frak{n}^{2}_1$ and $\frak{n}^{2}_2$ are $\Ad(K_1)$-invariant, it follows that $[k, \lambda^{2}_1X_1+\lambda^{2}_1X_2]\subseteq \mathfrak{n}^{2}_1\subset \mathfrak{m}_2$ and  $[k, A_{21}X_2+A_{12}X_1]\subseteq \mathfrak{n}^{2}_2\subset \mathfrak{m}_2$.

 We choose $X_1=A_{\alpha_1}+A_{-\beta_1}, X_2=A_{\alpha_2}+A_{-\beta_2}$.
 Then we have that
 \begin{eqnarray*}
 &&[X_1, A_{12}X_1]=[A_{\alpha_1}+A_{-\beta_1}, [\sqrt{-1}h_{\Lambda_p}, A_{\alpha_1}+A_{-\beta_1}]=2\sqrt{-1}\alpha_1(h_{\Lambda_p})(h_{\alpha_1}-h_{-\beta_1})\\
 &&-2\alpha_1(h_{\Lambda_p})N_{\alpha_1, -\beta_1}B_{\alpha_1-\beta_1},\\
 \\
 &&[X_2, A_{12}X_2]=[A_{\alpha_2}+A_{-\beta_2}, [\sqrt{-1}h_{\Lambda_p}, A_{\alpha_2}+A_{-\beta_2}]=2\sqrt{-1}\alpha_2(h_{\Lambda_p})(h_{\alpha_2}-h_{-\beta_2})\\
 &&-2\alpha_2(h_{\Lambda_p})N_{\alpha_2, -\beta_2}B_{\alpha_2-\beta_2},\\
 \\
 &&[X_1, A_{12} X_2]=[A_{\alpha_1}+A_{-\beta_1}, [\sqrt{-1}h_{\Lambda_p}, A_{\alpha_2}+A_{-\beta_2}]]=\alpha_2(h_{\Lambda_p})(N_{\alpha_1, -\alpha_2}B_{\alpha_1-\alpha_2}\\
&&-N_{\alpha_1, -\beta_2}B_{\alpha_1-\beta_2}
+N_{-\beta_1, \alpha_2}B_{-\beta_1+\alpha_2}-N_{-\beta_1, \beta_2}B_{-\beta_1+\beta_2}),\\
\\
&&[X_2, A_{12} X_1]=[A_{\alpha_2}+A_{-\beta_2}, [\sqrt{-1}h_{\Lambda_p}, A_{\alpha_1}+A_{-\beta_1}]]=
\alpha_1(h_{\Lambda_p})(N_{\alpha_2, -\alpha_1}B_{\alpha_2-\alpha_1}
\\
&&-N_{\alpha_2, -\beta_1}B_{\alpha_2-\beta_1}
+N_{-\beta_2, \alpha_1}B_{-\beta_2+\alpha_1}
-N_{-\beta_2, \beta_1}B_{-\beta_2+\beta_1}).
 \end{eqnarray*}

 Since $([X_1, A_{12}X_1]+[X_2, A_{12}X_2])\in \mathfrak{k}$, $([X_1, A_{12} X_2]+[X_2, A_{12} X_1])\in \mathfrak{m}_1\oplus \mathfrak{k}$ and $([k, \lambda^{1}_1X_1+\lambda^{1}_1X_2]+[k, A_{21}X_2+A_{12}X_1])\subset \mathfrak{m}_2$, it follows that
   \begin{equation*}
  ([k, \lambda^{1}_1X_1+\lambda^{1}_1X_2]+[k, A_{21}X_2+A_{12}X_1])
  \cap ([X_1, A_{12}X_1]+[X_2, A_{21}X_2]+[X_1, A_{21} X_2]+[X_2, A_{12} X_1])=\{0\}.
  \end{equation*}

    Since $\alpha_1\neq\beta_1$ and $\alpha_2\neq\beta_2$ we have $(-\beta_1\pm\alpha_2)\neq(\alpha_1\pm\alpha_2)\neq(-\beta_1\pm\beta_2)$  and $(\alpha_1\pm\alpha_2)\neq(\alpha_1\pm\beta_2)$, then we have $([X_1, A_{21} X_2]+[X_2, A_{12} X_1])\in  \mathfrak{k}_1$ if and only if $\alpha_1(h_{\Lambda_p})=\alpha_2(h_{\Lambda_p})=0$. But $\alpha_1(h_{\Lambda_p})\neq 0$ and $\alpha_2(h_{\Lambda_p})\neq0$, which is a contradiction. Therefore we have $A_{jk}=0, (j, k \in \{1, 2\})$.

   Now we prove that $\lambda^2_1=\lambda^2_1$ in (\ref{equ811}).

   Assume that $\lambda^2_1\neq\lambda^2_1$ in (\ref{equ811}). Since $G/K_1$ is a g.o.space, by Proposition \ref{P3} we have that for any $X_1 \in \mathfrak{n}^2_1,  X_2 \in \mathfrak{n}^2_2$ there exists a $k\in \mathfrak{k}_1$ such that
   $$
   [X_1, X_2]=\frac{\lambda^2_1}{\lambda^2_1-\lambda^2_2}[k, X_1]+\frac{\lambda^2_2}{\lambda^2_1-\lambda^2_2}[k, X_2].
   $$

     Lemma \ref{Le3} and Proposition \ref{PP4} implies  that there exist $\alpha_1\neq\beta_1 \in R^+_2$  such that $\alpha_1\mid_{\mathfrak{a}_1}=-\beta_1\mid_{\mathfrak{a}_1} $ and $\alpha_1(h)=\beta_1(h)$ for any $h\in \mathfrak{s}$. We choose $X_1=A_{\alpha_1}+A_{-\beta_1}, X_2=B_{\alpha_1}-B_{-\beta_1}$, then we have
     $[X_1, X_2]=2\sqrt{-1}(h_{\alpha_1}-h_{-\beta_1})-2N_{\alpha_1, -\beta_1}B_{\alpha_1-\beta_1}$, it follows that $[X_1, X_2]\neq0$ and $[X_1, X_2]\in \mathfrak{k}$. Since $\mathfrak{n}^2_1$ and $\mathfrak{n}^2_1$ are irreducible as $\Ad(K_1)$-modules, it follows that $(\frac{\lambda^2_1}{\lambda^2_1-\lambda^2_2}[k, X_1]+\frac{\lambda^2_2}{\lambda^2_1-\lambda^2_2}[k, X_2])\in \mathfrak{m}_2$, this is a contradiction. Therefore we have $\lambda^2_1=\lambda^2_2$.
  Then   $\Ad({K}_1)$-invariant scalar product on $\mathfrak{n}$ are reduced to
 \begin{equation}\label{e1}
\langle \cdot,\cdot \rangle=\mu\mathrm{Id}|_{\mathfrak{s}}+ \mu_1B(\cdot,\cdot)|_{{\mathfrak{m}}_1}+\mu_2 B(\cdot,\cdot)|_{{\mathfrak{m}}_2}, \ (\mu,  \mu_1, \mu_2 >0).
 \end{equation}

 Since $(G/K_1, g)$ is a g.o. space, assume that $\mu_2\neq \mu$, Proposition \ref{P3} implies that   for any $V\in \mathfrak{s}$ and $X\in \mathfrak{n}^2_1$ there exists
  $k\in \mathfrak{k}_1$ such that $[V, X]=\frac{\mu}{\mu-\mu_2}[k, V]+\frac{\mu_2}{\mu-\mu_2}[k, X]$.   Since   $[k, V]=0$, it follows that   $[V, X]=\frac{\mu_2}{\mu-\mu_2}[k, X]$. Since $[V, X]\in \mathfrak{n}^2_2$ and $[k, X]\in \mathfrak{n}^2_1$, this is a contradiction.
Hence we get that  $\mu_2=\mu$.

 Hence,   if $(G/K_1, g)$ is a g.o. space,  then the corresponding $\Ad({K}_1)$-invariant scalar product (\ref{equ7e}) is   reduced to
  \begin{equation}\label{36}
\langle \cdot,\cdot \rangle=\mu B(\cdot,\cdot)|_{\mathfrak{s}\oplus \mathfrak{m}_2}+ \lambda_1B(\cdot,\cdot)|_{{\mathfrak{m}}_1}.
 \end{equation}
\hfill $\Box$

\noindent
{\it  Proof of part 3) of Theorem \ref{T2}}.

Set $\mathfrak{m}_i=\mathfrak{n}^i_1\oplus\mathfrak{n}^i_2$, $i=1, 2$. It follows that
$\Lambda|_{\mathfrak{m}_{i}}$ has  the form
   \begin{equation}\label{equ81}
A|_{\mathfrak{m}_{i}}=\left (
\begin{array}{cc}
\lambda^{i}_1\mathrm{Id} |_{\mathfrak{n}^{i}_1}& A^i_{21}  \\
A^i_{12} & \lambda^{i}_2\mathrm{Id}|_{\mathfrak{n}^{i}_2}
\end{array}
\right ),  \lambda^{i}_1, \lambda^{i}_2> 0,
\end{equation}
where $A^i_{jk}:\mathfrak{n}^{i}_j\rightarrow \mathfrak{n}^{i}_k, \; j\neq k \in \{1, 2\}$ is  an $\Ad(K_1)$-equivalent map.

If $(G/K_1, g)$ is a g.o. space,  we can prove that
$A|_{\mathfrak{m}_1}$  is a diagonal matrix using the same method as the proof of  {\bf Case 1} of part 2) of Theorem \ref{T2}   and $\lambda^1_1=\lambda^1_2=\mu$,
we can also  prove that
$A|_{\mathfrak{m}_2}$  is  diagonal matrixe  using the same method as  the proof of  {\bf Case 2} of part 2) of Theorem \ref{T2}  and  $\lambda^2_1=\lambda^2_2=\mu$,  and this completes the proof.
\hfill $\Box$

\medskip
\medskip

 \medskip
 \noindent
 {\it Proof of Corollary \ref{C2}}.

Since $\dim\mathfrak{m}_2=2$, it follows that $\mathfrak{m}_2$ is reducible as $\Ad({K}_1)$-module by Remark \ref{R2}. If $(G/K_1, g)$ is a g.o. space,   case 2) of Theorem \ref{T2} implies that $\Ad({K}_1)$-invariant scalar product on $\mathfrak{n}$ is
  \begin{equation}\label{36}
\langle \cdot,\cdot \rangle=\mu B(\cdot,\cdot)|_{\mathfrak{s}\oplus \mathfrak{m}_2}+ \mu_1B(\cdot,\cdot)|_{{\mathfrak{m}}_1}, \; (\mu, \mu_1 > 0).
 \end{equation}

 By Corollary \ref{C3}  $(G/K_1, g)$ is a g.o. space if and only if for any $V\in \mathfrak{s}$  and for any $X_1\in \mathfrak{m}_1, X_2\in \mathfrak{m}_2$, there exists $k \in \mathfrak{k}_1$ such that
 \begin{equation}\label{38}
 [k+V+X_1+X_2, \mu (V+X_2)+\mu_1X_1]  \in \mathfrak{k}_1.
 \end{equation}

 Since $\dim\mathfrak{m}_2=2$,  it follows that $[k, X_2]=0$. Also, $[k+V, V]=0$, so (\ref{38})  reduces to
 \begin{equation}\label{39}
 [\mu_1k+(\mu_1-\mu)V+(\mu_1-\mu)X_2, X_1]  \in \mathfrak{k}_1.
 \end{equation}

 Since $\mu_1 \neq \mu$, (\ref{39})  is equivalent to
  \begin{equation}\label{40}
 [\frac{\mu_1}{\mu_1-\mu}k+V+X_2, X_1]  \in \mathfrak{k}_1.
 \end{equation}
Since $[\mathfrak{m}_2, \mathfrak{m}_1]\subseteq \mathfrak{m}_1$, it follows that $[\frac{\mu_1}{\mu_1-\mu}k+V+X_2, X_1]  \in \mathfrak{m}_1$,  then we obtain  that
 \begin{equation}\label{41}
 [\frac{\mu_1}{\mu_1-\mu}k+V+X_2, X_1]=0.
 \end{equation}\qed

\section{Proof of Theorem 3}

 \medskip
 \noindent
 {\it Proof of part 2) of Theorem \ref{T3}}.

Since  $s=1$ in the decomposition (\ref{L2}), it follows that $\dim \mathfrak{s}=1$. We assume that $\Pi_M=\{\alpha_p\}$ and it follows that $\mathfrak{s}=\mathbb{R}\sqrt{-1}h_{\Lambda_p}$. Since $\mathfrak{m}$ is reducible as $\Ad(K_1)$-module, by Lemma \ref{L2} we have  $\mathfrak{m}=\mathfrak{n}_1+\mathfrak{n}_2$, where $\mathfrak{n}_1$ and $\mathfrak{n}_2$ are equivalent and irreducible as  $\Ad(K_1)$-modules. Therefore,  $G$-invariant metrics on $G/K_1$ are defined  by \begin{equation}\label{equ77}
A=\left (
\begin{array}{ccccc}
A|_{\mathfrak{s}}& 0 \\
0 & A|_{\mathfrak{m}}
\end{array}
\right ),
\end{equation}
where $A|_{\mathfrak{s}}=\Lambda\mid_{\mathfrak{s}}=\lambda\mathrm{Id}\mid_{\mathfrak{s}}, (\lambda >0)$,  and $A|_{\mathfrak{m}}$ has the form

 \begin{equation}\label{equ88}
A|_{\mathfrak{m}}=\left (
\begin{array}{cc}
\mu_1\mathrm{Id} |_{\mathfrak{n}_1}& A_{21}  \\
A_{12} & \mu_2\mathrm{Id}|_{\mathfrak{n}_2}
\end{array}
\right ),  \mu_1, \mu_2> 0.
\end{equation}
 The  block matrices $A_{12}$ and  $A_{21}$ correspond to  $\Ad(K_1)$-equivariant  maps $\phi_1: \mathfrak{n}_1\rightarrow \mathfrak{n}_2$ and $\phi_2: \mathfrak{n}_2\rightarrow \mathfrak{n}_1$ respectively.
 Moreover, the symmetry of $\Lambda$ implies that $A_{12}=A_{21}$. Consequently, for any vector $X_j\in \mathfrak{n}_j\subset \mathfrak{m}, j=1, 2$, it is

 \begin{equation}\label{equ8}
 \Lambda X_j=\Lambda|_{\mathfrak{m}}X_j=\mu_jX_j+A_{jk}X_j,\quad A_{jk}:\mathfrak{n}_j\rightarrow \mathfrak{n}_k, \;j\neq k \in \{1, 2\}.
 \end{equation}

 Lemma \ref{Le3} implies that there  exist $\alpha, \beta\in R^+_M$ such that $\alpha|_{\mathfrak{a}_1}=-\beta|_{\mathfrak{a}_1}$ and $\alpha(h)=\beta(h)$ for any $h \in \mathfrak{s}$, where $\alpha, \beta $ are the lowest and highest roots respectively.

 Now we prove the necessity of part 2) of Theorem \ref{T3}.

   Assume that  $A_{jk}\neq0, \; j\neq k \in \{1, 2\}$.  Let  $A_{jk}$ be the matrix representation of the  map
 $$\ad(\sqrt{-1}h_{\Lambda_p}):\mathfrak{n}_j\rightarrow \mathfrak{n}_k, \; j\neq k \in \{1, 2\}.$$

 {\bf1)} Assume that  $\alpha=\beta$.

 Since $G/K_1$ is a g.o. space, by Corollary \ref{C3} we have that for any $X=(X_1+Y)\in \mathfrak{n}$ there exists a  $k\in \mathfrak{k}_1$  such that $[k+X, \Lambda X]\in \mathfrak{k}_1$, where $X_1\in \mathfrak{n}_1$ and $Y\in \mathfrak{s}$. Since $\Lambda X=\lambda Y +\mu_1X_1+A_{12}X_1$, then we obtain that
 \begin{eqnarray*}
 &&[k+X, \Lambda X]=[k+X_1+Y, \lambda Y +\mu_1X_1+A_{12}X_1]=[k, \mu_1X_1+A_{12}X_1]+(\lambda-\mu_1)[X, Y]\\
 &&+[X_1, A_{12}X_1]+[Y, A_{12}X_1].
  \end{eqnarray*}

 Since $\alpha=\beta$, by Remark \ref{R3} we have $\dim \mathfrak{m}=2$, By Remark \ref{R2} we have $\mathfrak{n}_1=\mathbb{R}A_{\alpha_{p}}$ and $\mathfrak{n}_2=\mathbb{R}B_{\alpha_{p}}$. We choose $X_1=A_{\alpha_{p}}$ and $Y=\sqrt{-1}h_{\Lambda_{p}}$.
 It follows that
 \begin{eqnarray*}
 &&([k, \mu_1X_1+A_{12}X_1]+(\lambda-\mu_1)[X, Y]+[Y, A_{12}X_1])\in \mathfrak{m}, \quad\quad
 [X_1, A_{12}X_1]\in\mathfrak{s},
  \end{eqnarray*}
 which implies that
 \begin{eqnarray*}
([k, \lambda^{1}_1X_1+A_{12}X_1]+(\lambda-\mu_1)[X_1, Y]
 +[Y, A_{12}X_1])\cap [X_1, A_{12}X_1]= \{0\}.
 \end{eqnarray*}
 It follows that  $[X_1, A_{12}X_1]\in \mathfrak{k}_1$ if and only if $[X_1, A_{12}X_1]=0$.
 But
 $$
 [X_1, A_{12}X_1]=[A_{\alpha_p}, [\ad(\sqrt{-1}h_{\Lambda_p}), A_{\alpha_p}]]=2\sqrt{-1}\alpha({h_{\Lambda_p}})h_{\alpha_p}\neq 0,
 $$
 which is a contradiction, so we  have $A_{jk}=0$, $(j\neq k \in \{1, 2\})$. Thus $A|_{\mathfrak{m}}$ is a diagonal matrix.

 Therefore  $\Ad({K}_1)$-invariant g.o. metrics on $\mathfrak{n}$ are  reduced to
 \begin{equation}\label{e1}
\langle \cdot,\cdot \rangle=\lambda\mathrm{Id}|_{\mathfrak{s}}+\mu_1 B(\cdot,\cdot)|_{{\mathfrak{n}}_1}+\mu_2 B(\cdot,\cdot)|_{{\mathfrak{n}}_2}, \ (\lambda,   \mu_1,  \mu_2>0).
 \end{equation}

 Since $(G/K_1, g)$ is a g.o. space, we assume that $\mu_1\neq \lambda$. Proposition \ref{P3} implies that   for any $V\in \mathfrak{s}$ and $X\in \mathfrak{n}_1$ there exists
  $k\in \mathfrak{k}_1$ such that $[V, X]=\frac{\lambda}{\lambda-\mu_1}[k, V]+\frac{\mu_1}{\lambda-\mu_1}[k, X]$.   Since   $[k, V]=0$, it follows that   $[V, X]=\frac{\lambda}{\lambda-\mu_1}[k, X]$. Since $[V, X]\in \mathfrak{n}_2$ and $[k, X]\in \mathfrak{n}_1$, this is a contradiction.
Hence we get that  $\mu_1=\lambda$. By the same method we obtain that $\mu_2=\lambda$. Therefore we have that  g.o. metric on $G/K_1$ is the standard metric.

{\bf 2)} Assume that  $\alpha\neq\beta.$

Since  $\alpha\neq\beta$,  by Remark \ref{R3} it follows that  $\dim \mathfrak{m}\neq 2$,  this implies   there exist $\alpha_1, \alpha_2\in R^+_M$  such that $\alpha_1+\alpha_2 \in R^+_M$.  Lemma \ref{Le3} and Proposition \ref{PP4} implies  that there exist $\beta_1, \beta_2 \in R^+_M$  such that $\alpha_1\mid_{\mathfrak{a}_1}=-\beta_1\mid_{\mathfrak{a}_1}, \; \alpha_2\mid_{\mathfrak{a}_1}=-\beta_2\mid_{\mathfrak{a}_1}$  and $\alpha_1(h)=\beta_1(h), \alpha_2(h)=\beta_2(h)$ for any $h\in \mathfrak{s}$. Since $\dim \mathfrak{m}\neq2$,  Proposition \ref{PP3} and Remark \ref{R3} imply that  $\alpha_1\neq\beta_1$ and $\alpha_2\neq\beta_2$.

Since $G/K_1$ is a g.o. space, by Corollary \ref{C3} we have that for any $X=(X_1+X_2)\in \mathfrak{n}$ there exists a  $k\in \mathfrak{k}_1$  such that $[k+X, \Lambda X]\in \mathfrak{k}_1$, where $X_1, X_2\in \mathfrak{n}_1$.

We choose $X_1=A_{\alpha_1}+A_{-\beta_1}, X_2=A_{\alpha_2}+A_{-\beta_2}$. By the same method as the proof of Case 1 of  part 2) of Theorem \ref{T2} we prove that $A_{12}=A_{21}=0$ and $\mu_1=\mu_2=\lambda$. Hence we have that  g.o. metric on $M$-space $G/K_1$ is the standard metric.

The sufficiency  of part 2) of Theorem \ref{T3} is obvious.

\hfill $\Box$

\end{document}